\newtheorem{proposition}{Proposition}[section]
\newtheorem{lemma}[proposition]{Lemma}
\newtheorem{corollary}[proposition]{Corollary}
\newtheorem{theorem}[proposition]{Theorem}
\newtheorem{definition}[proposition]{Definition}
\theoremstyle{definition}
\newtheorem{remark}[proposition]{Remark}
\numberwithin{equation}{section}
\DeclareMathOperator{\Cl}{Cl}
\DeclareMathOperator{\SL}{SL}
\DeclareMathOperator{\GL}{GL}
\DeclareMathOperator{\SO}{SO}
\DeclareMathOperator{\Sp}{Sp}
\DeclareMathOperator{\SU}{SU}
\DeclareMathOperator{\mdeg}{mdeg}
\DeclareMathOperator{\Ad}{Ad}
\DeclareMathOperator{\ad}{ad}
\begin{document}
\title[New dimensional estimates for subvarieties of linear algebraic groups]{New dimensional estimates for subvarieties of linear algebraic groups}
\date{\today}

\author{Jitendra Bajpai}
\address{Department of Mathematics, Kiel University, 24118 Kiel, Germany}
\email{jitendra@math.uni-kiel.de}

\author{Daniele Dona}
\address{Alfr\'ed R\'enyi Institute of Mathematics, 1053 Budapest, Hungary}
\email{dona@renyi.hu}

\author{Harald Andr\'es Helfgott}
\address{Mathematisches Institut, Georg-August-Universit\"at G\"ottingen, 37073 G\"ottingen, Germany; IMJ-PRG\\ Université Paris-Cité\\Boîte Courrier 7012\\8 place Aurélie Nemours\\75205 Paris Cedex 13\\France}
\email{harald.helfgott@gmail.com}
\subjclass[2020]{Primary: 20F69, 20G40, 05C25; Secondary: 14A10, 05C12}  
\keywords{Growth in groups, algebraic groups, escape from subvariety,  Babai's conjecture}

\begin{abstract}
  For every connected, almost simple linear algebraic group $G\leq\mathrm{GL}_{n}$ over a large enough field $K$, every subvariety $V\subseteq G$, and every finite generating set $A\subseteq G(K)$, we prove a general {\em dimensional bound}, that is, a bound of the form
  \[|A\cap V(\overline{K})|\leq C_{1}|A^{C_{2}}|^{\frac{\dim(V)}{\dim(G)}}\] with $C_{1},C_{2}$ depending only on $n,\deg(V)$. The dependence of $C_1$ on $n$ (or rather on $\dim (V)$) is doubly exponential, whereas $C_2$ (which is independent of $\deg(V)$) depends simply exponentially on $n$.
   
   Bounds of this form have proved useful in the study of growth in linear algebraic groups since 2005 (Helfgott) and, before then, in the study of subgroup structure (Larsen-Pink: $A$ a subgroup). In bounds for general $V$ and $G$ available before our work, the dependence of $C_1$ and $C_2$ on $n$ was of exponential-tower type.
 
   We draw immediate consequences regarding diameter bounds for untwisted classical groups $G(\mathbb{F}_{q})$. (In a separate paper, we derive stronger diameter bounds from stronger dimensional bounds we prove for specific families of varieties
   $V$.)
\end{abstract}

\maketitle
\tableofcontents  


\section{Introduction}\label{se:intro}

In the context of the study of growth of sets in algebraic groups, a {\em dimensional estimate} is an inequality that tells us that, if a set $A$ does not grow rapidly under multiplication by itself (that is, if it is what
some call an ``approximate subgroup''), then the intersection of $A$ with any
subvariety is not much larger than what one would guess very naïvely from the variety's dimension. 

More precisely: let $G\leq\mathrm{GL}_{n}$ be an algebraic group, seen as a variety inside the space of $n\times n$ matrices, let $V$ be a variety inside $G$, and consider a finite set $A\subseteq G(K)$ for some field $K$. A {\em dimensional estimate} is an inequality of the form
\begin{equation}\label{eq:dimest}
|A\cap V(\overline{K})|\leq C_{1}|A^{C_{2}}|^{\frac{\dim(V)}{\dim(G)}},
\end{equation}
where $C_{1},C_{2}$ depend only on the dimension and degree of $G$ and $V$, and not on $A$ or $K$. The notation $A^{k}$ indicates the set of all products of $k$ elements of $A$:
\begin{equation*}
A^{k}=\{a_{1}\cdot a_{2}\cdot\ldots\cdot a_{k}:a_{i}\in A\}.
\end{equation*}
We write $A^{-1}$ for $\{a^{-1}: a\in A\}$.

\subsection{Context: Origins of dimensional estimates}
\subsubsection{The study of subgroup structure}
In the special case of $A$ a subgroup $\Gamma$ of $G(K)$, dimensional estimates appear first in the work of Larsen and Pink~\cite{LP11}\footnote{First available in manuscript form in 1998.}.
 (Of course, $\Gamma$ cannot generate $G(K)$; instead, Larsen-Pink assume $\Gamma$ is 
 ``sufficiently general'', meaning essentially that $\Gamma$ is not contained in any algebraic subgroup of $G$ of low degree; see \cite[p. 1107]{LP11}.) Larsen and Pink called inequalities of the form
$|\Gamma \cap V(K)|\leq C |\Gamma|^{\dim(V)/\dim(G)}$ 
{\em nonconcentration estimates}.
The name {\em Larsen-Pink inequality} also appears in the literature.

 The main aim in~\cite{LP11} was to describe all finite subgroups $\Gamma\leq\mathrm{GL}_{n}(K)$ for $K$ an arbitrary field, thus generalizing Jordan's theorem to arbitrary characteristic -- without using the classification of finite simple groups. 
Dimensional estimates have found further applications: for instance, Larsen ~\cite{Lar04} used them to prove the dominance of word maps for simple algebraic groups. 

Since \cite{LP11} circulated in manuscript form for a long time before publication, the first published proof of the Larsen-Pink estimates was actually given by Hrushovski and Wagner in the setting of model theory~\cite{HW08}.

\subsubsection{Growth in linear algebraic groups}
A basic task in additive combinatorics is to classify all finite
subsets $A$ of an abelian group such that $A^{2}$ is not much larger than $A$. 
In the non-abelian setting, it makes more sense to classify subsets
$A$ such that $A^3$ is not much larger than $A$
(as that implies that $A^k$ is not much larger than $A$ for any bounded $k$). Helfgott proved in~\cite{Hel08} that any finite subset $A$ of $G=\SL_2(\mathbb{F}_p)$ generating $G$
satisfies a {\em product theorem}, i.e.\ a statement of the form
\begin{equation}\label{eq:prodthm}
\text{either $|A^3|\geq c |A|^{1+\eta}$ or $A^\ell = G$,}
\end{equation}
where $c,\eta,\ell$ are constants independent of $A$ and $p$. An immediate consequence of~\eqref{eq:prodthm} is a {\em diameter bound}, i.e.\ a statement of the form
\begin{equation}\label{eq:diam}
\textrm{diam}(G,A) \leq C(\log|G|)^{k},
\end{equation}
which is proved in~\cite{Hel08} with $C,k$ independent of $A$ and $p$. We recall that the
{\em diameter} $\textrm{diam}(G,A)$ of a finite group $G$ with respect to a subset $A\subseteq G$ generating $G$ can be defined to be the minimal $m$ such that
$(A\cup \{e\})^m = G$. This definition is equivalent to the usual definition by means of Cayley graphs; for a result like \eqref{eq:diam} it does not matter whether the graph diameter is directed or undirected, thanks to \cite[Thm.~1.4]{Bab06}.

The later paper~\cite{Hel11} (which appeared as a preprint in 2008), in the course of
giving a full proof of~\eqref{eq:prodthm} and~\eqref{eq:diam}
for $\SL_3$, established a dimensional estimate of the form
\eqref{eq:dimest} for any ``classical Chevalley'' group $G$ (i.e.\ untwisted classical) over
any field of characteristic $\ne 2$ -- in the
special case of $V$ a maximal torus \cite[Cor.~5.4]{Hel11}.
The procedure was actually stated there in somewhat greater generality (\cite[Prop.~4.12]{Hel11}) and applied to other varieties $V$ as well
(\cite[Cor.~5.14, Prop.~9.2, Prop.~9.4]{Hel11}). These were bounds
for $A\subseteq G(K)$ an arbitrary finite subset, as opposed to the special case of $A$ a subgroup as in Larsen-Pink.

There were two simultaneous, independent generalizations of \cite{Hel08} and \cite{Hel11}, namely, \cite{BGT11} and \cite{PS16} (summaries of which appeared as preprints in January 2010). Both give dimensional estimates for arbitrary subvarieties $V$ of any simple group $G$ of Lie type. The dimensional estimate in \cite{BGT11} is clearly of type \eqref{eq:dimest}; its proof follows the general lines of that in Larsen-Pink, adapted to sets rather than subgroups.
The proof in \cite{PS16} uses a different language and a somewhat different induction procedure, but leads to a
dimensional estimate of essentially the same shape.
Both papers used their dimensional estimates (that is, their versions of \eqref{eq:dimest}) to prove statements as in \eqref{eq:prodthm} and \eqref{eq:diam}.

\subsubsection{Approximate subgroups in model theory}
We have already mentioned \cite{HW08}. A little later,
Hrushovski proved \cite{Hru12} (available as a preprint in 2009) that, for any semisimple algebraic group $G$, if 
a finite $A\subseteq G(K)$ satisfies $|A^3|\leq C |A|$ for 
$C$ a constant, then a large proportion of the elements of
$A$ lie in a subgroup $H$ of $G(K)$ such that either $H$ is the
set of $K$-valued points of a proper algebraic subgroup of $G$ of bounded degree, or $H$ is contained in $(A^{-1} A)^2$.

The connection between Larsen-Pink and dimensional estimates as in~\cite{Hel08} and~\cite{Hel11} became apparent after this model-theoretic work. 
Pyber and Szab\'o credit the earlier work~\cite{HruPil} together with~\cite{Hel08} and~\cite{Hel11} as inspiration for their dimensional estimates.

\subsection{Results. Dependency on the rank.}

The results in~\cite{BGT11} and~\cite{PS16} depend on the rank of the linear algebraic group $G$. 
There has to be {\em some} dependency in dimensional bounds of type \eqref{eq:dimest} or \eqref{eq:prodthm}; a counterexample given in~\cite[Ch.~12]{PS16} shows that a result of the type~\eqref{eq:prodthm} 
cannot be in general true with $c$ and $\eta$ independent of the rank $r$ of $G$, and essentially the same counterexample shows the same for results of type~\eqref{eq:dimest}.

In~\cite{BGT11}, the dependence of the constants of \eqref{eq:dimest} on $r$ is completely non-explicit, due in part to the use of ultrafilters. In \cite{PS16}, the constants are given by towers of exponentials of height depending on $r$, as is clear from~\cite[Lem.~79]{PS16}. In both~\cite{BGT11} and~\cite{PS16}, the dependence on $r$ in the dimensional estimates~\eqref{eq:dimest} gets passed to growth results of type \eqref{eq:prodthm} and diameter bounds of type~\eqref{eq:diam}: the constants in these results are again either ineffective (in the case of \cite{BGT11}) or are of exponential-tower type (in the case of \cite{PS16}). 

We give general dimensional estimates whose constants have doubly and simply exponential dependence on $r$.

\begin{theorem}\label{th:main}
Let $G\leq\mathrm{GL}_{n}$ be a connected, almost simple linear algebraic group 
of rank $r$, defined over a field $K$ with $|K|\geq r+n\deg(G)$. Let $A\subseteq G(K)$ be a finite set with $\langle A\rangle=G(K)$ and $e\in A$.
Let $V$ be a subvariety of $G$ defined over $\overline{K}$. Write $d=\dim (V)$, $\delta = \dim (G)$. Then
\begin{equation}\label{eq:maintheq}
|A\cap V(\overline{K})|\leq C_{1} \left|A^{C_{2}}\right|^{d/\delta},
\end{equation}
where
\begin{align}\label{eq:manx}
C_{1} &\leq (2 \delta \deg(V))^{\delta^d}, & C_{2} & \leq n^{n^2+3}.
\end{align}

If $G$ is an untwisted classical group, the same conclusion holds without any assumption on $|K|$ and with $C_{2}\leq n^2 2^{n^2+1}$.
\end{theorem}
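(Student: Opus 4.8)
The plan is to follow the broad strategy of Larsen--Pink as adapted to sets by Breuillard--Green--Tao and Pyber--Szab\'o, but to organize the induction so that the growth of the constants can be controlled quantitatively. The key structural input is an \emph{escape from subvarieties} principle: since $\langle A\rangle=G(K)$ and $V\subsetneq G$, for any point $g\in G(K)$ one can find a bounded-length word $w$ in $A\cup A^{-1}$ (bounded in terms of $n$ and $\deg V$ only) such that $gw\notin V(\overline K)$. Because $e\in A$, words in $A\cup A^{-1}$ of length $\le k$ lie in $A^{O(k)}$, so escape costs us only a controlled power of $A$; this is precisely where $C_2$ gets its simply-exponential-in-$n$ size, coming from a B\'ezout-type bound on how many times a line (an orbit of a one-parameter piece of $A$) can meet $V$.

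Next I would set up the core dichotomy/induction on $\dim V$ (or on $\dim G$). If $V$ is zero-dimensional the bound is trivial (it is just $\deg V$). In the inductive step, one covers $A\cap V(\overline K)$ by pieces and plays off two cases. Roughly: either $A\cap V$ is concentrated near a proper algebraic subgroup $H\le G$ of bounded degree, in which case one intersects with cosets of $H$ and applies the inductive hypothesis inside $H$ (using that $\dim H<\dim G$ and the Larsen--Pink-style fact that $A\cap Hg$ behaves like an approximate subgroup of $H$ after translation, via a pivoting/Ruzsa argument on $|A^{O(1)}|$); or it is \emph{not} so concentrated, in which case one uses escape to produce enough generic translates $V_1,\dots,V_m$ of slices of $V$ whose images under multiplication sweep out a positive-dimensional family, and a fibering argument reduces the count on $V$ to counts on lower-dimensional varieties (the fibers) times a factor of $|A^{O(1)}|^{(\dim G-\dim V')/\dim G}$ coming from the base. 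Tracking the degrees through B\'ezout at each of the $\le d$ inductive steps is what produces the tower-free bound $C_1\le(2\delta\deg V)^{\delta^d}$: each step multiplies the relevant degree by a factor polynomial in $\delta$ and $\deg V$, and there are at most $d$ steps, each raising things to a power $\le\delta$.

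For the final assertion --- the case of an untwisted classical group, with \emph{no} hypothesis on $|K|$ and the improved $C_2\le n^2 2^{n^2+1}$ --- I would avoid the generic-semisimplicity/regular-element arguments that force the field to be large, and instead use the explicit Chevalley structure: classical groups come with explicit root subgroups $U_\alpha\cong\gm$ or $\mathbb{G}_a$ defined over the prime field, so one has a ready supply of one-parameter subgroups over $K$ itself, and the escape step can be performed using these unipotent/torus directions directly. Because each $U_\alpha$-orbit is a line (degree $1$) or a conic, the B\'ezout bound on the number of intersections with $V$ improves, and the length of the escaping word can be controlled by the number of positive roots, i.e.\ by $O(n^2)$, giving $C_2\le n^2 2^{n^2+1}$; the absence of any condition on $|K|$ comes from never needing a \emph{generic} element of $G(K)$, only the explicit generators of the root groups, which exist over every field. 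The main obstacle I expect is the bookkeeping in the non-concentration branch of the induction: making the fibering argument produce a clean recursion $N(V)\le (\text{poly in }\delta,\deg)\cdot N(V')^{?}\cdot|A^{O(1)}|^{?}$ with exponents that actually telescope to $d/\delta$, while simultaneously keeping the degree inflation at each step multiplicative rather than compositional --- it is precisely the temptation to iterate B\'ezout in a nested way that produces the exponential towers in earlier work, and the whole point here is to linearize that iteration.
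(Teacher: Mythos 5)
Your sketch gestures at the right family of tools (escape, induction on $\dim V$, fibering, B\'ezout bookkeeping), but the two places where the actual difficulty sits are missing. First, the mechanism that drives the induction in the paper is a \emph{generic skewness} statement (Lemma~\ref{le:skewness}): for $V,V'\subseteq G$ with $\dim V<\dim G$, $\dim V'>0$, the set of $g$ with $\dim(\overline{VgV'})\le\dim V$ lies in a hypersurface $W$ of degree $\le 1+\min\{\iota,\dim V\}$, proved by passing to tangent spaces/the Lie algebra. Your proposal never produces such a dimension-increasing statement, and -- crucially -- the simply exponential $C_2$ comes from escaping from this \emph{low-degree} locus $W$ (escape costs roughly $\deg(W)^{n^2}$ steps), not from ``how many times a line meets $V$''; without the degree bound on $W$ the escape cost is uncontrolled. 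Second, your ``fibering argument'' is exactly where earlier proofs broke down: one must bound the degree of the exceptional locus of the multiplication maps (points lying in larger-than-generic fibres), which is where \cite{BGT11} lost effectivity and \cite{PS16} got exponential towers; the paper does this with a dedicated degree bound (Proposition~\ref{pr:singproj}) and a ``crumbling'' lemma, none of which your plan supplies. Also, the ``concentrated near a proper subgroup $H$'' branch is both absent from the paper and unworkable as stated: $A\cap Hg$ need not generate $H(K)$, so the inductive hypothesis of the theorem cannot be applied inside $H$, and the pivoting/Ruzsa machinery you invoke belongs to the product theorem, not to the dimensional estimate. The paper instead stays inside the fixed $G$ and forms $Vg_1V\cdots g_{\ell-1}V$ until its closure is $G$ (at most $\delta-d+1$ large steps), then bounds $|A^m\cap V|^{\ell}$ by the image in $A^{\ell m+(\ell-1)k}$ times counts on fibres $F_j\subsetneq V$ of smaller dimension.

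The untwisted classical case is also misattributed. In the paper, the gain $C_2\le n^2 2^{n^2+1}$ comes solely from the chosen embedding having inversion of maximum degree $\iota=1$, so the skewness locus has degree $\le 2$ and escape is cheaper; and the removal of the hypothesis on $|K|$ comes from sharper point-counting over $\mathbb{F}_q$ (the Cayley transform into the degree-one Lie algebra, Corollary~\ref{co:lwchev}) together with the observation that when $|K|$ is below that threshold the bound \eqref{eq:maintheq} is trivially true because $|A\cap V(\overline K)|\le \deg(V)|K|^{d}\le C_1$. Your plan to escape along explicit root subgroups cannot work: the root-group generators are generally not contained in any bounded power of the arbitrary generating set $A$ (reaching them may take diameter-many steps), and the escape element must lie in $A^k$ with $k$ bounded. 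Moreover, genericity over $K$ cannot simply be dispensed with -- one still needs a point of $G(K)$ (indeed of $\langle A\rangle$) outside the bad locus, and for very small $K$ this can fail; the paper sidesteps it by trivializing the bound, not by avoiding generic elements.
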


For a further estimate applying to $G=\mathrm{SL}_{n}$ (in its usual embedding inside $\mathrm{Mat}_{n}$), see Remark~\ref{re:slnusual}.
\smallskip

{\em Remarks on the statement.}
Here and elsewhere, $\langle A\rangle=G(K)$ means that $A$ generates $G(K)$ as a {\em semigroup}, i.e., every element of $G(K)$ can be written as a product of finitely many elements of $A$ (the identity element $e$ is the empty product). Similar results in the literature often require $A$ to only generate $G(K)$ as a group, but add the stronger condition $A=A^{-1}$; our hypothesis is weaker. Alternatively, we can apply Theorem~\ref{th:main} to $A\cup A^{-1}$ instead of $A$, and obtain \eqref{eq:maintheq} with $(A \cup A^{-1})^{C_2}$ instead of $A^{C_2}$ for any $A$ generating $G(K)$ as a group. If $K$ is finite then $A$ generates the same object as a group and as a semigroup, so there is nothing to argue. The condition $e\in A$ is used to simplify the treatment of the problem, as it implies that $A^{m}\supseteq A^{m'}$ whenever $m\geq m'$.

``Connected'' means ``connected in the Zariski topology'' (\S \ref{sss:verybasic}). This is a standard
condition in the study of algebraic groups. Without it, we could have $G$ be zero-dimensional and equal to any finite subgroup of $\GL_n(K)$. If $G$ is almost simple and of dimension $>0$, then it is connected.

``Almost simple'' here means what some authors call ``simple as an algebraic group''; see \S 
\ref{se:linalg}. In particular, abelian groups are not almost simple, and neither are, say, solvable
groups. It should be obvious that Theorem~\ref{th:main} is not true for $G<\GL_n$ the group of diagonal matrices, or of upper triangular matrices.

``Untwisted classical groups'' are properly defined in \S\ref{se:chev}. Essentially, they are $\SL_m$, $\SO_{2 m}^{+}$, $\SO_{2m+1}$ and $\Sp_{2 m}$ with mild restrictions on rank and characteristic.
The embedding of $\SL_m$ inside the affine spaces of square matrices $\mathrm{Mat}_n$ is different from the usual one, as it is given by the product of the usual representation and the contragredient representation (\S\ref{se:chev}).
We shall see that that embedding can be convenient, in that it leads to an improved bound on $C_2$, one that is also valid for $\SO$ and $\Sp$ (embedded in the standard way). See the discussion in \S\ref{se:chev} (on embeddings with small $\iota$) and Remark~\ref{re:slnusual}. The orthogonal groups $\SO_{2 m}^{+}$ and $\SO_{2m+1}$ are not in general almost simple, so the statement for untwisted classical groups is not just a particular case of the first part.
\smallskip

{\em Remarks on the method of proof.}
Our procedure is somewhat similar to Larsen-Pink's, in that we reduce the counting of elements of $A$ on $V$ to counting elements on fibres and images through appropriate morphisms. There are differences nonetheless, which we will discuss below. The dependence on the rank $r$ here seems to be roughly the natural limit of the method, qualitatively speaking, which is of course not to say that the bounds are tight. The counterexample in \cite[Ch.~12]{PS16} only shows that $C_{2}$ has to be greater than a constant times $r^2$.

As we can see, Theorem~\ref{th:main} is quantitatively much stronger than \cite{BGT11} and \cite{PS16}.
The reason lies mainly in the following: (a) better general tools for bounding the degree of varieties, (b) improvements in a procedure called {\em escape from subvarieties} in \S\ref{se:escape}, (c) a degree bound for the {\em exceptional locus} of a map in \S\ref{se:sing}, (d) working over the Lie algebra in a key
step, and (e) changes in the main inductive procedure, in that we replace the procedure going back to Larsen-Pink by our own.

(a) Proving bounds on $|A\cap V|$ requires keeping track of the ``complexity'' of $V$ in some quantitative
sense. Our chosen sense of ``complexity'' is simply the degree of $V$, defined as the sum of the degrees of
its irreducible components. We then need good bounds on the degrees of $f(V)$ and $f^{-1}(V)$, for
any morphism $f$. See \S \ref{sss:degree}.

(b) A proof of escape from a subvariety $V$ as in \cite{Hel08} or \cite{Hel11} (based on \cite{EMO05}) leads to an escape procedure with a number of steps that is doubly exponential on the dimension of $V$: in turn, this fact would lead to a doubly exponential dependence of $C_{2}$ on the rank $r$, even if it were enough for us to escape only once. We use instead a much-improved escape procedure from \cite{BDH21}, guaranteeing a simply exponential number of steps.

(c) As for the degree of an exceptional locus $E$, \cite{BGT11} and \cite{PS16} give different treatments, leading in both cases to the bad dependence of their versions of \eqref{eq:maintheq} on $r$. In \cite[App.~A]{BGT11}, an ultrafilter argument makes the bound on $\deg(E)$ ineffective. In \cite[Claim~80]{PS16}, standard elimination theory gives a doubly exponential partition of $E$, and \cite[Lem.~79]{PS16} works by induction to give an overall degree bound that is an exponential tower. One might improve the procedure so as to use elimination theory only once, giving a doubly exponential bound for $\deg(E)$, but then that is a further reason why the dependency of $C_2$ on $r$ would be doubly exponential (and it would stay so even with the improved escape procedure we will actually use). Our proof on the other hand skips elimination theory entirely.

(d) One can often get a better solution to a problem by linearizing it. Already \cite{Hel11} passed often to the Lie algebra $\mathfrak{g}$ of $G$. We will pass to the Lie algebra in order to prove our own version of Lemma \ref{le:skewness} (``generic skewness'', a type of result dating to Larsen-Pink). This is as in \cite[Lem.~4.6]{Hel19b}, except we will not assume that $\mathfrak{g}$ is simple; by working both over $G$ and over $\mathfrak{g}$, we will be able to make do just with the assumption that $G$ is almost simple. We will obtain a result with good control on the degree and no additional assumptions.

(e) We change the main
inductive process that lies at the heart of the proof (\S \ref{se:dimest}). Larsen and Pink
set up a complicated inductive procedure descending to two base cases ($V=G$ and $V$ zero-dimensional). We on the other hand organize our induction in stages, with each stage, or large step, involving a decrease in dimension (Proposition~\ref{pr:oberstair}), so that we have at most $\dim(V)$ large steps.
If we followed Larsen-Pink more closely, we would 
get substantially worse bounds (though, if we put into practice most of our other improvements,
we would still get $C_1 = \exp(\exp(n^{O(1)}))$ and $C_2 = \exp(n^{O(1)})$; see \cite[Appendix]{BDH21}).

\subsection{Applications}\label{subs:appl}

The proof of~\eqref{eq:prodthm} in both~\cite{BGT11} and~\cite{PS16} falls essentially into two halves: (a) dimensional estimates, and (b) a pivoting argument.
Here ``pivoting'' is a kind of inductive argument; the name comes from~\cite{Hel11}, where it was abstracted from proofs of the sum-product theorem in finite fields (in particular, work of Bourgain). The pivoting argument in~\cite{PS16} and~\cite{BGT11} is essentially the same. The first version of~\cite{BGT11} had a somewhat more involved argument, which was simplified by Helfgott.  

The natural question is then: what happens if we use Theorem~\ref{th:main} as our dimensional estimate while keeping the pivoting argument essentially as in~\cite{BGT11} and~\cite{PS16}?

 In~\cite{BDH21},
we prove a diameter bound of the form
\begin{equation}\label{eq:amago}
\textrm{diam}(G,A)\leq(\log|G|)^{O(r^4 \log r)}.\end{equation}
Proving~\eqref{eq:amago} necessitated a change in strategy:
instead of aiming at general dimensional estimates, we aimed at optimizing dimensional estimates for certain kinds of varieties (tori and
Zariski closures of conjugacy classes). In several respects, the procedure in~\cite{BDH21}
represents a return to~\cite{Hel11}, where dimensional estimates were also determined by explicit construction for specific varieties.
One of the aims of the present paper is to clarify what can be obtained from carefully optimized dimensional estimates of a general kind,
while otherwise proceeding as in~\cite{BGT11} and~\cite{PS16}.
In \S\ref{se:diambounds} we do exactly that: using Theorem~\ref{th:main} (or rather
Theorem \ref{th:ober}, which is a little more general), we prove Theorem~\ref{th:growth}, which is a growth result essentially of the form \eqref{eq:prodthm} with $c\geq\exp(-\exp(O(r^2\log r)))$; 
then we prove 
Theorem~\ref{th:diam}, which is 
a bound of type~\eqref{eq:diam} with $C = \exp(\exp(O(r^2 \log r))$ and $k = O(r^2\log r)$.

Since we are especially interested in groups over finite fields, it is natural to ask whether Theorem~\ref{th:main} can apply to other finite simple groups of Lie type. This is not immediate, since the twisted groups are not directly defined as the $\mathbb{F}_{q}$-points of an algebraic group. In \S\ref{se:concl} we offer some remarks in this direction.


\section{Basic tools and preliminaries}\label{se:prelim}

Let us collect basic properties about varieties, degrees, and $\mathbb{F}_{q}$-points, and recall a useful routine ({\em escape from subvarieties}) that will allow us to find generic elements when necessary.

\subsection{Varieties}
\subsubsection{Basic nomenclature}\label{sss:verybasic}

Our framework will be very classical -- more so than that in the first chapter of Hartshorne \cite{Hartshorne},
being closer to the first chapter of Mumford's Red Book \cite{Mumford}, say, or
\cite[Part II]{DS98}. Let
us go over some standard terms,
in part because their definitions in the literature can vary. 

A {\em variety} $V$ in $n$-dimensional affine space $\mathbb{A}^{n}$ is defined by a set of $s$ equations of the form $P_{i}(x_{1},\ldots,x_{n})=0$ (for $1\leq i\leq s$), where all $P_i$ are polynomials and $s$ is any non-negative integer. 
We say $V$ is {\em defined
over} a field $K$ if the coefficients of all $P_i$ 
belong to $K$. Define
the {\em set of points $V(K)$} to be
\begin{equation*}
V(K)=\{(k_{1},\ldots,k_{n})\in K^{n}:  P_{i}(k_{1},\ldots,k_{n})=0\;\;\forall 1\leq i\leq s\}.
\end{equation*}
Two sets of polynomials $\mathcal{P}=\{P_{i}\}_{i\leq s}$ and $\mathcal{Q}=\{Q_{j}\}_{j\leq t}$ with coefficients in $K$ define the same variety $V$ if the ideals generated by $\mathcal{P},\mathcal{Q}$ in the ring $\overline{K}[x_{1},\ldots,x_{n}]$ have the same radical, which we denote by $I(V)$. By the Nullstellensatz, $V=W$ if and only if $V(\overline{K})=W(\overline{K})$.
The intersection of two varieties defined by polynomials $\mathcal{P}=\{P_{i}\}_{i\leq s}$ and $\mathcal{Q}=\{Q_{j}\}_{j\leq t}$ is the variety defined by $\mathcal{P}\cup\mathcal{Q}$, while the union is defined by $\{P_{i}Q_{j}\}_{i\leq s, j\leq t}$. For $V$ defined over $K$, a \textit{subvariety} $W\subseteq V$ is a variety with $I(W)\supseteq I(V)$ (we do not necessarily require $W$ to be defined over $K$).

Note that what we have just defined is the so-called {\em set-theoretic} (or {\em reduced})  intersection of two varieties, not their scheme-theoretic intersection.
For instance, for us, the intersection of a parabola $y=x^2$ and the line $y=0$ is simply the point at the origin: the intersection is defined by
the polynomials $y-x^2$ and $y$, which generate an ideal whose radical is the
ideal generated by $x$ and $y$; hence, the intersection is considered to be
identical to the variety defined by $x=y=0$.

It will later become clear why the set-theoretic intersection is better suited
to our applications.

In the {\em Zariski topology}, the closed sets are the sets $V(\overline{K})$, $V$ a variety. Any topological term we use refers to this topology. The {\em Zariski closure} $\overline{S}$ of a set $S\subseteq \mathbb{A}^n(\overline{K})$ is its closure in the Zariski topology -- that is, $\overline{S}$ is the smallest set of the form $V(\overline{K})$ containing $S$. We may speak of
the variety $V$ itself as being the Zariski closure of $S$.

A variety $V$ is {\em irreducible}\footnote{Many sources, such as \cite{Hartshorne} and \cite{Mumford}, reserve the word {\em variety} for
what we call {\em irreducible variety}, and call {\em closed algebraic sets} what we call {\em varieties}. We follow instead the same convention that \cite{Shafv1} and \cite{DS98} follow.}
if it cannot be written as $V_{1}\cup V_{2}$ with $V_{1}\not\subseteq V_{2}$ and $V_{2}\not\subseteq V_{1}$. Every $V$ can be uniquely decomposed into a finite union of irreducible varieties not contained in each other, called the {\em irreducible components} of $V$. The {\em dimension} $\dim(V)$ of an irreducible variety $V$ is the largest $d$ for which we can write a chain of irreducible proper subvarieties $V_{0}\subsetneq V_{1}\subsetneq\ldots\subsetneq V_{d}=V$. We define the dimension of a non-irreducible variety $V$ to be the largest of the dimensions of its irreducible components. A variety is {\em pure-dimensional} when all its components are of the same dimension.

An {\em affine subspace} is a variety inside $\mathbb{A}^{n}$ that can be defined using only linear equations. A {\em linear subspace} is a variety defined using linear homogeneous equations.

We define projective space $\mathbb{P}^n$ as usual; we write $(x_0:x_1:\dotsc:x_n)$ for a point in $\mathbb{P}^n$.
We can define {\em projective varieties} in $\mathbb{P}^{n}$: a projective variety is given by a set of equations $P_i(x_0,x_1,\dotsc,x_n)=0$ for some {\em homogeneous} polynomials $P_i$.
All definitions above still apply -- although, in $\mathbb{P}^n$, we speak simply of {\em subspaces},
since the polynomials are always homogeneous. 
We can see $\mathbb{A}^{n}$ as an open subset of $\mathbb{P}^{n}$ in the Zariski topology -- for instance, by identifying points $(x_{1},\ldots,x_{n})$ of $\mathbb{A}^{n}$ with points of the form $(1:x_{1}:\ldots:x_{n})$ in $\mathbb{P}^{n}$. (More generally, one can embed $\mathbb{A}^n$ in $\mathbb{P}^n$ by
letting a linear combination $a_0 x_0 + \dotsc + a_n x_n$  equal $1$, say.)

There is a way to define a projective variety whose points are in one-to-one correspondence with subspaces of $\mathbb{P}^{n}$. For instance, there is a natural bijection between $(n-1)$-dimensional subspaces of $\mathbb{P}^{n}$ and points of $\mathbb{P}^{n}$ itself, in that $(y_{0}:y_{1}:\ldots:y_{n})\in\mathbb{P}^{n}$ defines the subspace $x_{0}y_{0}+\ldots+x_{n}y_{n}=0$. More generally, $d$-dimensional subspaces of $\mathbb{P}^{n}$ correspond to points of the {\em Grassmannian} $G(d+1,n+1)$, an irreducible projective variety.  This is the same as saying that $(d+1)$-dimensional linear subspaces of $\mathbb{A}^{n+1}$
correspond to points of $G(d+1,n+1)$.
Using the identification between $\mathbb{A}^{n}$ and an open subset of $\mathbb{P}^{n}$ described above, it is easy to show that $d$-dimensional {\em affine} subspaces in $\mathbb{A}^{n}$ correspond to points in an open subset of $G(d+1,n+1)$.

We say that a statement is true for a {\em generic point}\footnote{This is a classical meaning of ``generic'', used
in the literature on growth in groups (e.g., \cite[\S 2.5.2]{Hel11} and \cite[Def.~3.6]{BGT11}) but also elsewhere.
The meaning of ``generic'' in scheme theory is surely inspired by it but not identical:
the classical meaning is still current in conversation, but ``generic point'' in scheme theory means something else -- a non-closed point whose closure contains all closed points of a variety, i.e., all its points in the classical sense (see~\cite[\S II.1, Def.~1]{Mumford} and~\cite[\S II.2, Ex.~2.3.3]{Hartshorne}). Cf. the discussion at the beginning of~\cite[\S 2]{LP11}.}
of a variety $V$ if the set of points $x\in V(\overline{K})$ for which the statement is false is contained in a subvariety $W$ of $V$ with dimension lower than $\dim(V)$. If $V$ is irreducible, then $\dim(W)<\dim(V)$ if $W$ is a proper subvariety, i.e., if the complement of $W$ is nonempty. A statement is true for a {\em generic $d$-dimensional affine subspace} of $\mathbb{A}^{n}$ if it is true for a generic point of the Grassmannian $G(d+1,n+1)$.

A {\em morphism} $f:\mathbb{A}^{m}\rightarrow\mathbb{A}^{m'}$ is given by an $m'$-tuple of polynomials $f_{i}$ on $m$ variables.
A morphism $f:X\rightarrow Y$ for $X\subseteq\mathbb{A}^{m},Y\subseteq\mathbb{A}^{m'}$
is given as the restriction of a morphism 
$g:\mathbb{A}^{m}\rightarrow\mathbb{A}^{m'}$ such
that $g(x)$ lies on $Y$ for every point $x\in X(\overline{K})$.
If two distinct morphisms $g_{1},g_{2}$ as above agree on every point of
$X(\overline{K})$, then they are of course considered to define the same morphism $f$.\footnote{Ideal-theoretically: $g_1$ and $g_2$ are equivalent modulo $I(X)$. We are defining morphisms defined on $X$ as restrictions of morphisms $\mathbb{A}^m\to \mathbb{A}^{m'}$ so that there is no ambiguity
in speaking of the maximum degree $\mathrm{mdeg}(f)$ (\S\ref{sss:degree}).
}

For a morphism $f:X\to Y$ and a subvariety $V$ defined by polynomials
$P_i(y_1,\dotsc,y_n)$, the {\em preimage} $f^{-1}(Y)$ is defined by
the polynomials $P_i(f_1(\vec{x}),\dotsc,f_n(\vec{x}))$, together with
the polynomials defining $X$. The set of points
of $f^{-1}(Y)$ over a field $K$ consists of those points in $X(K)$
that are mapped by $f$ to points lying on $Y$. As always, we identify two
varieties if the corresponding ideals have the same radical. Thus, for instance,
if $X$ is the parabola $y-x^2=0$, $Y$ the $y$-axis $x=0$ and $f$ the projection
$\pi(x,y)=y$, the preimage $f^{-1}(\{0\})$ is, for us, simply the
variety given by $x=y=0$, i.e., the point at the origin, and not a double point at the origin. (In fact we do not work with
the notion of a ``double point'' at all.)

Let $X$ be a variety defined over a field $K$ and let $f:X\to Y$ be a morphism.
Then the image $f(X(\overline{K}))$ need not be the set of points of a variety,
though it is a {\em constructible set} (Chevalley; see
\cite[\S I.8, Cor.~2 to Thm.~3]{Mumford}), meaning a finite union of
intersections $U\cap W$, where $U$ is open and $W$ is closed. We can of course
study the Zariski closure $\overline{f(X(\overline{K}))}$, which is a variety
that we denote by $\overline{f(X)}$.

Morphisms are well-behaved in the sense that, for $f:X\rightarrow Y$ and $y\in Y$, the variety $f^{-1}(y)$ has generically the ``correct'' dimension. This fact is formalized in the theorems below, which are important results from algebraic geometry that include in particular the {\em upper semi-continuity of the dimension}. We recall that \cite[I.4, Def.~5]{Mumford} defines varieties to be irreducible, unlike here.

\begin{theorem}\label{th:chev}\cite[\S I.8, Thm.~2]{Mumford}
Let $f:X \rightarrow Y$ be a morphism of varieties over a field $K$, with $X$ irreducible. Let $u = \dim (X) - \dim(\overline{f(X)})$.

For every $y\in f(X(\overline{K}))$, every component of the variety $f^{-1}(y)$ has dimension $\geq u$. More generally, for any subvariety $W$ of $\overline{f(X)}$, the variety $f^{-1}(W)$ has a component of dimension $\geq \dim (W) + u$.
\end{theorem}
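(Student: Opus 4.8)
The plan is to reduce at once to a dominant morphism — the statement only involves $\overline{f(X)}$, so we may replace $Y$ by $\overline{f(X)}$, which is irreducible since $X$ is — and then prove the bound on a single fibre by induction on $m=\dim(Y)$, the one substantial input being Krull's principal ideal theorem in its geometric form: if $g$ is a regular function on an irreducible affine variety $Z$ not vanishing identically on $Z$, then every component of $\{g=0\}\cap Z$ has dimension exactly $\dim(Z)-1$ (at least $\dim(Z)-1$ by Krull, at most $\dim(Z)-1$ because a proper closed subvariety of an irreducible variety has strictly smaller dimension). Throughout one may freely pass to a principal affine open neighbourhood $U_y$ of a given point $y$, replacing $Y$ by $U_y$ and $X$ by $f^{-1}(U_y)$: this is again a dominant morphism of irreducible varieties of the same dimensions, with the same fibre over $y$.

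For a single $y\in f(X(\overline K))$ I would induct on $m$. If $m=0$ then $Y$ is a point, $f^{-1}(y)=X$ has dimension $\dim(X)=u$. If $m\geq 1$, fix $y$ and a component $Z$ of $f^{-1}(y)$. In an affine chart around $y$ some coordinate is non-constant on $Y$ (else $Y$ is a point), so there is a regular $g$ with $g(y)=0$ and $g\not\equiv 0$ on $Y$; as $f$ is dominant, $g\circ f\not\equiv 0$ on $X$, so by the Hauptidealsatz every component of $\{g\circ f=0\}\cap X$ has dimension $\geq\dim(X)-1$. Since $Z\subseteq f^{-1}(y)\subseteq\{g\circ f=0\}\cap X$, the irreducible set $Z$ lies in some component $C$ of $\{g\circ f=0\}\cap X$, with $\dim(C)\geq\dim(X)-1$. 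Here $C$ is irreducible, $f(C)\subseteq\{g=0\}\cap Y$ so $\dim\overline{f(C)}\leq m-1$, and $Z$ — a maximal irreducible closed subset of $f^{-1}(y)$ contained in $C$ — is a component of $(f|_C)^{-1}(y)$, with $y\in f(C(\overline K))$ because $Z\neq\emptyset$ maps to $y$. The inductive hypothesis applied to $f|_C\colon C\to\overline{f(C)}$ then gives $\dim(Z)\geq\dim(C)-\dim\overline{f(C)}\geq(\dim(X)-1)-(m-1)=u$, closing the induction.

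For a subvariety $W\subseteq Y$ I would first reduce to $W$ irreducible: if $W_0$ is a component of $W$ of maximal dimension and $D$ a component of $f^{-1}(W_0)$ of dimension $\geq\dim(W_0)+u$, then $D$ lies in a component of $f^{-1}(W)\supseteq f^{-1}(W_0)$ of dimension $\geq\dim(W)+u$. For $W$ irreducible one can either iterate the one-function cut $m-\dim(W)$ times, removing codimension one on $Y$ and pulling back to $X$ at each stage, or, more conceptually, choose a component $X_0$ of $f^{-1}(W)$ dominating $W$ and observe that for generic $y\in W$ the fibre $f^{-1}(y)$ has dimension $u$ and is covered by the components of $f^{-1}(W)$ that dominate $W$ (the others contribute nothing over a generic $y$), so that one such component $X_0$ satisfies $\dim(X_0)=\dim(W)+\dim(\text{generic fibre of }f|_{X_0})=\dim(W)+u$. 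A dominating component of $f^{-1}(W)$ exists once $f$ is surjective, as in the intended applications; some hypothesis of this kind is genuinely needed, since otherwise $f^{-1}(W)$ may be empty (e.g.\ $f\colon\{xy=1\}\to\mathbb{A}^1$, $(x,y)\mapsto x$, $W=\{0\}$).

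I expect the main obstacle to be precisely this last step: making sure the ``$+\dim(W)$'' is not lost, i.e.\ that a \emph{single} component of $f^{-1}(W)$ reaches dimension $\dim(W)+u$ rather than only $u$. This forces one to track which components of $f^{-1}(W)$ (or of the iterated cut) dominate $W$ and which do not, and to invoke upper semicontinuity of fibre dimension to conclude that the dominating ones carry the full generic fibre dimension. The single-fibre induction itself is short once the affine-local reductions are in place; its only delicate point is that the component $C$ chosen there has image of dimension strictly less than $\dim(Y)$, which is guaranteed because $C$ sits inside the zero set of one additional function pulled back from $Y$.
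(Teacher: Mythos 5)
The paper does not actually prove this statement; it is quoted from Mumford (\S I.8, Thm.~2), so the only comparison available is with the classical argument. Your proof of the fibre statement is correct and is essentially that classical argument: reduce to $f$ dominant onto $\overline{f(X)}$, cut by one function $g$ vanishing at $y$ but not identically on the image, apply the geometric Hauptidealsatz to $g\circ f$ on $X$, and induct on the dimension of the image after passing to the component $C$ containing the chosen component $Z$ of the fibre. The checks that matter are all in place: $g\circ f\not\equiv 0$ by dominance, $y\in f(C(\overline{K}))$ because $Z\subseteq C$ is nonempty, $Z$ is a component of $(f|_C)^{-1}(y)$ by maximality, and $\dim\overline{f(C)}\leq\dim\overline{f(X)}-1$ since $f(C)$ lies in the zero set of $g$.

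The second half needs repair at exactly the spot you flagged, in two respects. First, the claim that for generic $y\in W$ the fibre $f^{-1}(y)$ has dimension $u$ is false in general (over the centre of a blow-up every fibre is larger than $u$); what you need, and all you need, is ``$\geq u$'', which is the first half applied at a point $y\in W\cap f(X)$ chosen generically in $W$: take $y$ inside the open subsets of $W$ furnished by Theorem~\ref{th:chev2}\eqref{th:chev21} for each component of $f^{-1}(W)$ dominating $W$, and off the images of the non-dominating components; then the component of $f^{-1}(y)$ witnessing ``$\geq u$'' lies in some dominating component $X_0$, whose fibre over $y$ has pure dimension $\dim X_0-\dim W$, whence $\dim X_0\geq\dim W+u$. (Mumford's route avoids generic points: by prime avoidance choose $g_1,\dotsc,g_c$, $c=\dim\overline{f(X)}-\dim W$, so that $W$ is a component of their common zero locus; a component $Z$ of $f^{-1}(W)$ \emph{dominating} $W$ is then a component of $\{g_1\circ f=\dotsb=g_c\circ f=0\}\cap X$, and Krull gives $\dim Z\geq\dim X-c=\dim W+u$. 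Your ``iterate the one-function cut'' variant needs this domination for the same reason: without it the big components of the pulled-back cut need not lie in $f^{-1}(W)$.) Second, your caveat that some extra hypothesis is genuinely needed is right, and in fact more is true than your empty-preimage example shows: even $f^{-1}(W)\neq\emptyset$ does not suffice. For $f:\mathbb{A}^3\to\mathbb{A}^4$, $f(a,b,c)=(ab,ac,b,c)$, one has $\overline{f(\mathbb{A}^3)}=\{x_1x_4=x_2x_3\}$, so $u=0$; taking $W=\{x_1x_4=x_2x_3\}\cap\{x_3=x_4=0\}$, a plane, the preimage $f^{-1}(W)=\{b=c=0\}$ is a nonempty line, of dimension $1<\dim W+u=2$ (its image is the origin, so it does not dominate $W$). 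The correct reading — and the one the paper effectively uses, e.g.\ around \eqref{eq:gatos} — is Mumford's: the bound applies to components of $f^{-1}(W)$ dominating $W$ (equivalently, assume $f$ surjective, or replace $W$ by $\overline{f(X)\cap W}$), and that is precisely the version your argument establishes once the generic-fibre claim is corrected as above.
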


\begin{theorem}\label{th:chev2}
Let $f:X \rightarrow Y$ be a morphism of varieties over a field $K$, with $X$ irreducible. Let $u = \dim (X) - \dim(\overline{f(X)})$.
\begin{enumerate}[(i)]
\item\label{th:chev21} \cite[\S I.8, Cor.~1 to Thm.~3]{Mumford} There is a nonempty open set $U\subseteq \overline{f(X)}$ such that, for every $y\in U(\overline{K})$, the variety $f^{-1}(y)$ is nonempty and pure-dimensional of dimension $u$.
\item\label{th:chev23} \cite[\S I.8, Cor.~3 to Thm.~3 and Cor.~1 to Thm.~3]{Mumford} For any $x\in X(\overline{K})$, let $e(f,x)$ be the maximum among the dimensions of the irreducible components of $f^{-1}(f(x))$ containing $x$. Then
\begin{equation}\label{eq:exlocus}
\{x\in X(\overline{K}):e(f,x)>u\}
\end{equation}
is the set of points of a proper subvariety of $X$.
\end{enumerate}
\end{theorem}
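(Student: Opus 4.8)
Both parts of Theorem~\ref{th:chev2} are classical facts of dimension theory, and in practice one simply cites \cite[\S I.8]{Mumford}; but here is the shape of the argument I would give. The plan is to establish (i) first and then deduce (ii) from it. To begin, I would replace $Y$ by $\overline{f(X)}$, so that $f$ becomes dominant and $u=\dim X-\dim Y$; note that $\overline{f(X)}$ is irreducible because $X$ is.

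For (i), the point is to control the generic fibre. I would apply Noether normalization to the function-field extension $K(X)/K(Y)$, which has transcendence degree $u$: after shrinking $Y$ to a nonempty open subset, this yields a factorization $f=\pi\circ g$ with $g\colon X\to Y\times\mathbb{A}^{u}$ finite and $\pi\colon Y\times\mathbb{A}^{u}\to Y$ the projection. Finite morphisms have zero-dimensional fibres, so for $y$ in this open set each component of $f^{-1}(y)$ is finite over a subvariety of $\{y\}\times\mathbb{A}^{u}$ and hence has dimension $\le u$; combined with the lower bound of Theorem~\ref{th:chev} (every component has dimension $\ge u$), the fibre $f^{-1}(y)$ is pure of dimension $u$, and it is nonempty because a dominant morphism has constructible — hence generically dense — image. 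Alternatively one sidesteps the explicit factorization by invoking upper semicontinuity of $y\mapsto\dim f^{-1}(y)$ (Chevalley) together with the computation that the generic fibre has dimension equal to the transcendence degree of $K(X)$ over $K(Y)$, namely $u$.

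For (ii), I would use that $x\mapsto e(f,x)$ is upper semicontinuous on $X$ — the pointwise form of Chevalley's semicontinuity theorem, obtained by the same stratification of $X$ into locally closed loci of constant fibre dimension. Thus $\{x:e(f,x)>u\}$ is closed. To see it is proper, take the open set $U\subseteq\overline{f(X)}$ produced in (i): then $f^{-1}(U)$ is a nonempty open subset of $X$ (nonempty since the fibres over $U$ are nonempty), hence dense as $X$ is irreducible, and for every $x\in f^{-1}(U)$ the fibre $f^{-1}(f(x))$ is pure of dimension $u$, so $e(f,x)=u$. Therefore the exceptional locus lies in the proper closed set $X\setminus f^{-1}(U)$ and, being closed, is the set of points of a proper subvariety of $X$.

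The only genuinely substantial input is the structure of fibre dimension — the generic-finiteness factorization via Noether normalization, equivalently Chevalley's constructibility and semicontinuity theorems; everything else is bookkeeping with Theorem~\ref{th:chev} and the irreducibility of $X$ and $\overline{f(X)}$. Since this is entirely standard, we will in fact just cite \cite[\S I.8]{Mumford} rather than reproduce the argument.
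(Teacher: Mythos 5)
Your proposal is correct, and it matches the paper's treatment: the paper offers no proof of Theorem~\ref{th:chev2} at all, simply citing \cite[\S I.8]{Mumford} for both parts, exactly as you conclude. Your sketch (Noether normalization/Chevalley for part (i), upper semicontinuity of $x\mapsto e(f,x)$ plus the open set from (i) for part (ii)) is the standard argument behind those citations and is sound.
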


\subsubsection{Degrees}\label{sss:degree}

Let $V\subseteq\mathbb{A}^{n}$ be a pure-dimensional variety with $\dim(V)=d$. By \cite[\S II.3.1.2, Thm.]{DS98}, for a generic $(n-d)$-dimensional affine subspace $L$ of $\mathbb{A}^n$ (in other words: 
for every $(n-d)$-dimensional affine subspace $L$ corresponding to a point in some
nonempty open subset $U\subseteq G(n-d+1,n+1)$), the intersection of
$V$ with $L$ consists of a finite, fixed number of points. We call that number the {\em degree} $\deg(V)$ of $V$.

We can extend the definition of degree to general varieties $V$: $\deg(V)$ is the sum of the degrees of the pure-dimensional parts of $V$.
The bound $\deg(V_{1}\cup V_{2})\leq\deg(V_{1})+\deg(V_{2})$ holds for any $V_{1},V_{2}$ directly by definition. 
If $V$ is the union of irreducible components $V_{i}$, then $\deg(V)=\sum_{i}\deg(V_{i})$.

By a generalization of \textit{B\'ezout's theorem}, for $V_{1},V_{2}$ pure-dimensional,
\begin{equation}\label{eq:bezout}
\deg(V_1\cap V_2)\leq \deg(V_{1})\deg(V_{2}).
\end{equation}
The classical form of Bézout's theorem is just a statement on curves on the plane.
The general version here is due to Fulton and Macpherson; it is based on their work
on intersection theory. The exact statement here can be found for instance in \cite[Ex.~8.4.6]{Ful84},
\cite[(2.26)]{zbMATH03880868}, or
\cite[\S II.3.2.2, Thm.]{DS98}; even more general and precise statements can be found in
Fulton~\cite{Ful84}. (For further proofs, see \cite{zbMATH03745352}, inspired by an earlier, and flawed, attempt by Severi, and \cite{zbMATH03966271}.)
By our definition of degree, we deduce easily from~\eqref{eq:bezout} that~\eqref{eq:bezout} itself holds for $V_1$, $V_2$ arbitrary, that is, not necessarily pure-dimensional.

{\em Another aside on intersections.} It is here that we see the effect of
working with set-theoretical intersections rather than scheme-theoretical intersections. If we were working with scheme-theoretical intersections, we would
need an additional condition (Cohen-Macaulay)
for the statement \eqref{eq:bezout} to be true.

It is possible to explain this issue entirely in terms of ideals rather than
schemes. Let us consider an example\footnote{Kindly shown to us by Miles Reid.} in $\mathbb{A}^4$.

Let $V_1$ be the union of the $xy$-plane through the origin and the $zw$-plane through the origin. Let $V_2$ be the hyperplane given by $x=z$. Then the set-theoretic intersection $V_1\cap V_2$ is the union of the line $x=z=w=0$ and the line
$x=y=z=0$. We see then that \eqref{eq:bezout} holds: $\deg(V_1\cap V_2)=2$, $\deg(V_1)=2$, $\deg(V_2)=1$.

To obtain the scheme-theoretic intersection, we should start by considering
the ideals $I_1$ and $I_2$ defining $V_1$ and $V_2$ (namely,
$I_1 = (x z, x w, y z, y w)$ and $I_2 = (x-z)$. We then take the sum
$I_1+I_2$, which is the ideal $(x z, x w, y z, y w, x-z)$. Now we will {\em not}
take the radical of this ideal. Rather, we take the primary decomposition
$I_1 + I_2 = Q_1\cap Q_2\cap Q_3$, where
$$Q_1 = (x,y,z),\;\;\;\;\;\; Q_2 = (x,z,w),\;\;\;\;\;\;
Q_3 = (x-z,y,z^2,w).$$
Here $Q_1$ and $Q_2$ just give us the lines $x=z=w=0$ and the line
$x=y=z=0$, whereas $Q_3$ is a somewhat fat version of the point at the
origin (since the radical of $Q_3$ is just $(x,y,z,w)$, which defines
the point at the origin). The scheme-theoretical intersection $V_1\cap V_2$
is defined to consist of those three varieties; its degree must then be
at least $3$. We see, then, that, for the scheme-theoretical intersection, the
generalized Bézout inequality \eqref{eq:bezout} can fail to hold.
{\em End of aside.}

Let $V$ be a variety defined by a single polynomial equation $P=0$ with $\deg(P)>0$. Then $\deg(V)\leq\deg(P)$, and equality holds if $P$ has no repeated factors. By B\'ezout, if $V$ is defined by many equations $P_{i}=0$, which means that $V=\bigcap_{i}V_{i}$ with $V_{i}$ defined by the single $P_{i}=0$, then $\deg(V)\leq\prod_{i}\deg(P_{i})$.

For a morphism $f:\mathbb{A}^{m}\rightarrow\mathbb{A}^{m'}$ given by an $m'$-tuple of polynomials $f_{i}$, we define the {\em maximum degree} of $f$ to be $\mdeg(f):=\max_{i}\deg(f_{i})$. This definition does not 
seem to be in the standard literature, but it is natural to work with this quantity when dealing with explicit bounds in algebraic geometry. 
For a morphism $f:X\rightarrow Y$, we define $\mdeg(f)$ to be the minimum of $\mdeg(g)$ over all $g:\mathbb{A}^{m}\rightarrow\mathbb{A}^{m'}$ with $g|_{X}=f$.

\subsubsection{Degrees of images and preimages}

Let us prove several basic bounds on how the degree of a variety behaves
under morphisms. We recall that we are working with what may be called
set-theoretic preimages, set-theoretic intersections, etc.

\begin{lemma}\label{le:zarimdeg}
Let $V\subseteq\mathbb{A}^m$ be a variety and $f:V\rightarrow\mathbb{A}^{n}$ a morphism. Then\footnote{Bound \eqref{eq:boundima} was given by user Angelo on MathOverflow in 2011 in reply to question 63451 by HH.}
\begin{equation}\label{eq:boundima}
\deg(\overline{f(V)})\leq\deg(V)\mdeg(f)^{\dim(\overline{f(V)})}.
\end{equation}
\end{lemma}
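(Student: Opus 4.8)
The plan is to reduce at once to the case where $V$ is irreducible, so that $W:=\overline{f(V)}$ is irreducible as well. Indeed, if $V=\bigcup_{j}V_{j}$ is the decomposition into irreducible components, then $\overline{f(V)}=\bigcup_{j}\overline{f(V_{j)}}$, each $\dim\overline{f(V_{j})}\le\dim W$, each $\mdeg(f|_{V_{j}})\le\mdeg(f)$, and $\sum_{j}\deg(V_{j})=\deg(V)$; hence the bound for irreducible $V$ implies the general case after summing and using $\deg(\overline{f(V)})\le\sum_{j}\deg(\overline{f(V_{j})})$. The case $f$ constant (equivalently $\mdeg(f)=0$) is trivial, since then $W$ is a point and $\deg W=1\le\deg V$. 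So from now on $V$ is irreducible, $f$ is non-constant, $e:=\dim W\ge 1$, and $\mdeg(f)\ge 1$.

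Next I would fix a suitably generic $(n-e)$-dimensional affine subspace $L\subseteq\mathbb{A}^{n}$. By Theorem~\ref{th:chev2}(i) there is a nonempty open $U\subseteq W$ with $f^{-1}(y)\ne\emptyset$ for every $y\in U(\overline{K})$; the complement $W\setminus U$ is a proper closed subvariety of the irreducible $W$, so it has dimension $<e$. A generic $L$ of dimension $n-e$ meets $W$ in exactly $\deg W$ points (the defining property of the degree, $W$ being pure-dimensional), and, since a generic $(n-e)$-plane avoids any fixed subvariety of dimension $<e$ (a dimension count on the incidence variety in the Grassmannian $G(n-e+1,n+1)$), a generic $L$ also satisfies $L\cap(W\setminus U)=\emptyset$; as the Grassmannian is irreducible, I may pick one $L$ with both properties. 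Write $W\cap L=\{p_{1},\dots,p_{N}\}$, $N=\deg W$; then each $p_{j}\in U(\overline{K})\subseteq f(V(\overline{K}))$.

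Then I would bound $\deg(f^{-1}(L))$ from both sides. \emph{Upper bound:} fix $g:\mathbb{A}^{m}\to\mathbb{A}^{n}$ with $g|_{V}=f$ and $\mdeg(g)=\mdeg(f)$, and write $L$ as the common zero set of affine-linear forms $\lambda_{1},\dots,\lambda_{e}$. By the definition of preimage, $f^{-1}(L)=V\cap H_{1}\cap\dots\cap H_{e}$ with $H_{i}=\{x:(\lambda_{i}\circ g)(x)=0\}$. Each $\lambda_{i}\circ g$ has degree $\le\mdeg(g)=\mdeg(f)$, so $\deg(H_{i})\le\mdeg(f)$: if $\lambda_{i}\circ g$ is a nonzero constant then $f(V)\cap L=\emptyset$, contradicting $p_{1}\in f(V)\cap L$, while if it is identically zero then $H_{i}=\mathbb{A}^{m}$ has degree $1\le\mdeg(f)$. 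Iterating the generalized B\'ezout inequality~\eqref{eq:bezout} now gives $\deg(f^{-1}(L))\le\deg(V)\,\mdeg(f)^{e}$. \emph{Lower bound:} $f$ maps $f^{-1}(L)$ onto $\{p_{1},\dots,p_{N}\}$ (onto, because each $p_{j}\in f(V)$), and each irreducible component of $f^{-1}(L)$, being connected, maps under the continuous $f$ to a connected subset of the finite set $\{p_{1},\dots,p_{N}\}$, hence lies inside a single fibre $f^{-1}(p_{j})$; since these fibres are nonempty and pairwise disjoint, the irreducible components of $f^{-1}(L)$ are distributed among them, so by additivity of degree over irreducible components $\deg(f^{-1}(L))=\sum_{j=1}^{N}\deg(f^{-1}(p_{j}))\ge N=\deg W$. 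Combining the two estimates yields $\deg(\overline{f(V)})=\deg W\le\deg(V)\,\mdeg(f)^{e}=\deg(V)\,\mdeg(f)^{\dim\overline{f(V)}}$, as desired.

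I expect the only genuinely delicate point to be the choice of $L$: one must arrange simultaneously that $L$ meets $W$ in the full number $\deg W$ of points \emph{and} that all of those points lie in the image $f(V(\overline{K}))$, not merely in its closure $W$ — this is what legitimizes the fibre-counting in the lower bound. It is precisely here that Theorem~\ref{th:chev2}(i) and the elementary fact that a generic plane avoids lower-dimensional subvarieties enter; everything else is bookkeeping with~\eqref{eq:bezout} and with the additivity of degree over irreducible components.
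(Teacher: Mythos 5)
Your proof is correct. It shares the paper's core strategy --- intersect $\overline{f(V)}$ with a generic affine subspace $L$ of complementary dimension, pull back, and bound $\deg(f^{-1}(L))$ via the generalized B\'ezout inequality \eqref{eq:bezout}, which is where the exponent $\dim(\overline{f(V)})$ comes from --- but it diverges from the paper in how the lower bound $\deg(f^{-1}(L))\geq\deg(\overline{f(V)})$ is extracted. The paper first reduces to the case $\dim(V)=\dim(\overline{f(V)})$ by replacing $V$ with its intersection with finitely many generic hyperplanes (which leaves $\overline{f(V)}$ unchanged and does not increase $\deg(V)$); after that reduction $f^{-1}(L)$ is a finite set, and one simply counts points: $|L\cap\overline{f(V)}|\leq|f^{-1}(L)|\leq\deg(f^{-1}(L))$. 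You skip that reduction: your $f^{-1}(L)$ may have positive-dimensional components, and instead you note that it is the disjoint union of the $N=\deg(\overline{f(V)})$ nonempty fibres $f^{-1}(p_j)$, each of degree at least $1$, and conclude by additivity of degree over irreducible components. Your route also makes explicit, via Theorem~\ref{th:chev2}(i) together with the fact that a generic $(n-e)$-plane avoids the lower-dimensional set $W\setminus U$, that every point of $L\cap W$ lies in the actual image $f(V(\overline{K}))$ and not merely in its closure --- a point the paper's sketch subsumes under the word ``generic''. What each approach buys: the paper's hyperplane slicing makes the fibre bookkeeping trivial, at the cost of justifying that $\overline{f(H\cap V)}=\overline{f(V)}$ for generic $H$; yours avoids that step at the price of the component-distribution argument, and is somewhat more careful on the image-versus-closure issue.
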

The bound is tight for Veronese varieties.
\begin{proof} 
We sketch the proof, which is given in~\cite{BDH21}. If $\dim(\overline{f(V)})<\dim(V)$,
then $\overline{f(H\cap V)}=\overline{f(V)}$ for a generic hyperplane $H$, so up to taking intersection with finitely many $H$ we may assume that $\dim(\overline{f(V)})=\dim(V)=d$. Now a generic fibre is made of finitely many points, and taking a generic affine subspace $L$ of codimension $d$ we get
\begin{equation*}
\deg(\overline{f(V)})=|L\cap\overline{f(V)}|\leq|f^{-1}(L)|\leq\deg(f^{-1}(L))\leq\deg(V)\mdeg(f)^{d}
\end{equation*}
by B\'ezout.
\end{proof}

In particular, since any projection $\pi$ has $\mdeg(\pi)=1$, we have $\deg(\overline{\pi(V)})\leq \deg(V)$.

\begin{lemma}\label{le:invimdeg-new}
Let $V\subseteq\mathbb{A}^{m}$ and $W\subseteq\mathbb{A}^{n}$ be varieties and $f:V\rightarrow\mathbb{A}^{n}$ a morphism. Then
\begin{equation}
\deg(f^{-1}(W))\leq\deg(V)\deg(W)\mdeg(f)^{\dim(\overline{f(V)})}.
\end{equation}
\end{lemma}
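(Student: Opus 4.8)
The plan is to pass to the graph of $f$ inside $\mathbb{A}^{m}\times\mathbb{A}^{n}$ and reduce the whole statement to a single degree bound for that graph, which I would then prove by a careful version of the generic-hyperplane-section argument behind Lemma~\ref{le:zarimdeg}. First I would reduce to $V$ irreducible: if $V=\bigcup_{i}V_{i}$ is the decomposition into irreducible components, then $f^{-1}(W)=\bigcup_{i}(f|_{V_{i}})^{-1}(W)$, and since $\mdeg(f|_{V_{i}})\le\mdeg(f)$ and $\dim(\overline{f(V_{i})})\le\dim(\overline{f(V)})$, the general case follows by summing degrees over the $V_{i}$ (the case $\mdeg(f)=0$, i.e.\ $f$ constant, being trivial). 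So assume $V\subseteq\mathbb{A}^{m}$ irreducible, put $d=\dim(V)$ and $e=\dim(\overline{f(V)})$, and let $\Gamma=\{(x,f(x)):x\in V\}\subseteq\mathbb{A}^{m}\times\mathbb{A}^{n}$ be the graph. It is a variety and $\pi_{x}\colon\Gamma\to V$ is an isomorphism, so $\pi_{x}$ restricts to an isomorphism of $\Gamma\cap(\mathbb{A}^{m}\times W)$ onto the closed set $f^{-1}(W)$. By the bound $\deg(\overline{\pi(X)})\le\deg(X)$ for projections, by B\'ezout \eqref{eq:bezout}, and by $\deg(\mathbb{A}^{m}\times W)=\deg(W)$,
\begin{equation*}
\deg(f^{-1}(W))\le\deg\bigl(\Gamma\cap(\mathbb{A}^{m}\times W)\bigr)\le\deg(\Gamma)\deg(W),
\end{equation*}
so it remains to prove $\deg(\Gamma)\le\deg(V)\,\mdeg(f)^{e}$.

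If $e=0$ then $f$ is constant on the irreducible $V$, so $\Gamma=V\times\{\mathrm{pt}\}$ and $\deg(\Gamma)=\deg(V)$. If $e\ge1$ and $e<d$, I would intersect $V$ with a generic hyperplane $H\subseteq\mathbb{A}^{m}$: exactly as in the proof of Lemma~\ref{le:zarimdeg} (and using Bertini, since here $\dim(V)\ge e+1\ge2$), $V\cap H$ is irreducible of dimension $d-1$ with $\deg(V\cap H)=\deg(V)$ and $\overline{f(V\cap H)}=\overline{f(V)}$, so $e$ is unchanged and $\Gamma\cap(H\times\mathbb{A}^{n})$ is the graph of $f|_{V\cap H}$. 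The point requiring care is that $\deg(\Gamma\cap(H\times\mathbb{A}^{n}))=\deg(\Gamma)$ for generic $H$. Here the ``cylinder'' hyperplane $H\times\mathbb{A}^{n}$ of $\mathbb{A}^{m+n}$ is tangent to $\Gamma$ at a point $(x,f(x))$ precisely when the linear form cutting out $H$ vanishes on $T_{x}V$, i.e.\ precisely when $H$ is tangent to $V$; since the dual variety of $\overline{V}\subseteq\mathbb{P}^{m}$ is a proper subvariety, a generic $H$ is not tangent to $V$, so $\Gamma\cap(H\times\mathbb{A}^{n})$ is a generically reduced, pure-codimension-one section of the irreducible variety $\Gamma$ and hence has degree exactly $\deg(\Gamma)$. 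Iterating $d-e$ times (legitimate: every variety cut in the process has dimension $\ge e+1\ge2$) reduces me to $d=e$. There $x\mapsto(x,f(x))$ is a bijection $V\to\Gamma$ with $\dim(\Gamma)=d$, so for a generic codimension-$d$ affine subspace $L=\bigcap_{i=1}^{d}\{\ell_{i}=0\}$ of $\mathbb{A}^{m+n}$ the number $\deg(\Gamma)=|\Gamma\cap L|$ equals the (finite) number of $x\in V$ with $\ell_{i}(x,f(x))=0$ for all $i$; as each $\ell_{i}(x,f(x))$ has degree $\le\mdeg(f)$, B\'ezout \eqref{eq:bezout} gives $\deg(\Gamma)\le\deg(V)\,\mdeg(f)^{d}=\deg(V)\,\mdeg(f)^{e}$.

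The main obstacle is exactly this degree-invariance $\deg(\Gamma\cap(H\times\mathbb{A}^{n}))=\deg(\Gamma)$: a generic hyperplane of $\mathbb{A}^{m+n}$ is \emph{not} of cylinder type, so one cannot simply invoke ``a generic hyperplane section preserves the degree'' and must instead run the tangency computation above, relating tangency of a cylinder hyperplane to $\Gamma$ to tangency of a hyperplane to $V$ and then using properness of the dual variety. Everything else is routine bookkeeping with \eqref{eq:bezout} and the projection bound; as with Lemma~\ref{le:zarimdeg}, the complete details could be deferred to \cite{BDH21}.
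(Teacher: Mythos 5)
Your reduction to the graph and the final B\'ezout count in the case $d=e$ are fine, but the step you yourself single out as the main obstacle --- $\deg(\Gamma\cap(H\times\mathbb{A}^{n}))=\deg(\Gamma)$ for a generic hyperplane $H\subseteq\mathbb{A}^{m}$ --- is not established by the argument you give: the principle you invoke (``a generically reduced, pure-codimension-one section of an irreducible affine variety has degree exactly $\deg(\Gamma)$'') is false. Degree can be lost in an affine hyperplane section in two ways: non-reducedness (tangency), which your dual-variety computation addresses, and components of $\overline{\Gamma}\cap\overline{H\times\mathbb{A}^{n}}$, taken in $\mathbb{P}^{m+n}$, lying in the hyperplane at infinity, which you never address. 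The second failure mode really occurs for cylinder sections of graphs: take $V=\mathbb{A}^{2}$, $f(x,y)=(x^{2},y^{2})$, so $\Gamma=\{z=x^{2},\,w=y^{2}\}\subseteq\mathbb{A}^{4}$ has degree $4$; for a generic line $H=\{y=\alpha x+\beta\}$ the section $\Gamma\cap(H\times\mathbb{A}^{2})$ is an irreducible curve along which the cylinder is nowhere tangent to $\Gamma$ (the form $y-\alpha x-\beta$ does not vanish on $T_{p}\Gamma$ at any $p$), yet its degree is $2$: the missing degree sits on the vertical line at infinity, which \emph{every} cylinder hyperplane contains. So ``non-tangent along the section'' does not give degree preservation, and your argument as written proves nothing at this point.

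The equality you need is in fact true in the regime where you apply it, but precisely because $e<d$, and that hypothesis never enters your justification. The points of $\overline{\Gamma}$ at infinity lying in the vertical subspace $\Lambda=\{x_{0}=x_{1}=\dots=x_{m}=0\}$ project into the points at infinity of $\overline{f(V)}$, hence form a set of dimension at most $e-1<d-1$; consequently no $(d-1)$-dimensional component of $\overline{\Gamma}\cap\{x_{0}=0\}$ lies in $\Lambda$, and a generic cylinder hyperplane contains no component of the intersection at infinity. An argument of this kind (plus a word about components of $V\cap H$ contained in the singular locus of $V$) is exactly what is missing; without it the proof is incomplete. Note that the paper's own proof sidesteps the issue entirely: it first proves the weaker bound with exponent $\dim(V)$ by the same graph trick, then intersects with a generic affine subspace of codimension $\dim(f^{-1}(W))$ so that the preimage becomes finite, and uses Theorem~\ref{th:chev} to conclude that after this cut $\dim(V)=\dim(\overline{f(V)})$, so the weak bound already yields the exponent $\dim(\overline{f(V)})$ --- no degree-preservation statement for a special family of hyperplane sections is ever needed.
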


\begin{proof}
First we prove the weaker bound
\begin{equation}\label{eq:invimdeg}
\deg(f^{-1}(W))\leq\deg(V)\deg(W)\mdeg(f)^{\dim(V)}.
\end{equation}
Let $g:V\to V\times \mathbb{A}^n$ be given by $g(x) = (x,f(x))$ and let $\pi_1$ be the projection  $V\times \mathbb{A}^n \to V$. Then $f^{-1}(W) = \pi_1(g(V)\cap (\mathbb{A}^m\times W))$, and so
$$\begin{aligned}\deg(f^{-1}(W)) &\leq \deg(g(V) \cap (\mathbb{A}^m\times W))\\ &\leq \deg(g(V)) \deg(W) \leq \deg(V) \deg(W) \mdeg(f)^{\dim(V)} \end{aligned}$$
by B\'ezout and Lemma \ref{le:zarimdeg}. 

Now let $L\subseteq\mathbb{A}^{m}$ be an affine subspace $L$ of codimension $\mathrm{codim}(L) = \dim(f^{-1}(W))$, generic in the sense that $L\cap f^{-1}(W)$ is a set of $\deg(f^{-1}(W))$ points. Replace $V$ and $f$ with $L\cap V$ and $f|_{L\cap V}$, noting that $\deg(L\cap V)\leq\deg(V)$ by B\'ezout and $\mdeg(f|_{L\cap V})\leq\mdeg(f)$.  In other words, we have reduced matters to the case where 
$f^{-1}(W)$ is zero-dimensional. We may now also reduce to the case of $V$ irreducible without loss of generality.

Theorem~\ref{th:chev} and the irreducibility of $V$ imply that $\dim (f^{-1}(W))\geq \dim(V)-\dim(\overline{f(V)})$. Since $\dim (f^{-1}(W))=0$, we conclude that $\dim(V) = \dim(\overline{f(V)})$. The result now follows from \eqref{eq:invimdeg}.
\end{proof}
One can actually modify the proof of Lemma \ref{le:invimdeg-new} slightly to prove a stronger bound:
\begin{equation}\label{eq:gatos}
\deg(f^{-1}(W))\leq\deg(V)\deg(W)\mdeg(f)^{\dim(\overline{f(V)}) - \dim(\overline{f(V)\cap W})}.
\end{equation}
The argument is as follows. First, by choosing a generic $L$, we can assume not just that
$|L\cap f^{-1}(W)| = \deg(f^{-1}(W))$ but also that $L\cap V$ has dimension 
$\dim (V) - \mathrm{codim} (L) = \dim (V) - \dim(f^{-1}(W))$. As a result, we obtain the bound
$$\deg(f^{-1}(W))\leq\deg(V)\deg(W)\mdeg(f)^{\dim (V) - \dim(f^{-1}(W))}.$$
Again by Theorem~\ref{th:chev}, $\dim (f^{-1}(W)) \geq \dim(\overline{f(V)\cap W}) + \dim(V) - \dim(\overline{f(V)})$, and so 
$$\dim (V) - \dim(f^{-1}(W)) \leq \dim(\overline{f(V)})-
\dim(\overline{f(V)\cap W}).$$
Thus, \eqref{eq:gatos} holds.

It is easy to find cases in which \eqref{eq:gatos} is tight: 
most maps $f:V=\mathbb{A}^n\to \mathbb{A}^n$ given by polynomials of degree $D$ and most varieties
$W\subseteq \mathbb{A}^n$ that are complete intersections should give us examples (since, for $g_i:\mathbb{A}^n\to \mathbb{A}^1$ of degree $d_i$, and $W$ of degree $\prod_{i=1}^k d_i$ given by $g_i(\vec{y})=0$, $1\leq i\leq k$, the variety $f^{-1}(W)$ given by $(g_i\circ f)(\vec{y}) = 0$ should
have degree $\prod_{1\leq i\leq k} \deg(g_i\circ f) = D^k \prod_{i=1}^k d_i = 
D^{n-\dim(W)} \deg(W)$).

One can improve $\dim(\overline{f(V)\cap W})$ in \eqref{eq:gatos} to
$\dim(\overline{f(V)}\cap W)$, as follows. First, because \eqref{eq:invimdeg} holds for affine varieties, it also holds for projective varieties. (Just choose a copy of affine space inside $\mathbb{P}^m$ so that the complement does not contain any components of $f^{-1}(W)$ or $V$.)  Then proceed as above to prove $\deg(f^{-1}(W))\leq \deg(V) \deg(W) \mdeg(f)^{\dim(f(V))-\dim (f(V)\cap W)}$ for $V$, $W$ projective. This statement then implies that \eqref{eq:gatos} holds for $v$, $W$ affine with
$\dim(\overline{f(V)}\cap W)$ instead of $\dim(\overline{f(V)\cap W})$.

We will need that stronger version of \eqref{eq:gatos} only in the special case of $W$ a hypersurface, i.e., $W$ defined by a single polynomial equation $g(\vec{y})=0$ 
(where $g$ is not necessarily irreducible). In that case, the proof is much shorter, and in fact obvious.

\begin{lemma}\label{le:invimdeg-silly}
Let $V\subseteq\mathbb{A}^{m}$ and $W\subseteq\mathbb{A}^{n}$ be varieties,
with $W$ given by the equation $g(\vec{y}) = 0$, $g$ a polynomial. Let
$f:V\rightarrow\mathbb{A}^{n}$ a morphism. Then
\begin{equation}
\deg(f^{-1}(W))\leq\deg(V) \deg(W) \mdeg(f).
\end{equation}
\end{lemma}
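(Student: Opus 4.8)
The plan is to unwind the definition of the set-theoretic preimage used in this paper: $f^{-1}(W)$ is nothing but the intersection of $V$ with a \emph{single} hypersurface in $\mathbb{A}^m$, so Lemma \ref{le:invimdeg-silly} will follow at once from the degree estimate for hypersurfaces recalled in \S\ref{sss:degree} together with the generalized B\'ezout inequality \eqref{eq:bezout}.

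Concretely, I would first dispose of trivialities: if $g$ is constant, then $W$ is either empty (so $f^{-1}(W)=\emptyset$) or all of $\mathbb{A}^n$ (so $f^{-1}(W)=V$), and the asserted inequality is immediate; so assume $\deg(g)>0$. Replacing $g$ by its squarefree part changes neither $W$ nor $f^{-1}(W)$ (both taken set-theoretically) and does not increase $\deg(g)$, so we may assume $g$ is squarefree, whence $\deg(W)=\deg(g)$ by the hypersurface degree fact of \S\ref{sss:degree}. Next, choose a morphism $g'\colon\mathbb{A}^m\to\mathbb{A}^n$ with $g'|_V=f$ and $\mdeg(g')=\mdeg(f)$ (the minimum defining $\mdeg(f)$ is attained). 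By the definition of preimage in \S\ref{sss:verybasic}, $f^{-1}(W)=V\cap W'$, where $W'\subseteq\mathbb{A}^m$ is the hypersurface cut out by $h(\vec{x}):=g(g'_1(\vec{x}),\dots,g'_n(\vec{x}))=0$. Substituting the $g'_i$ into $g$ gives $\deg(h)\leq\deg(g)\,\mdeg(g')=\deg(W)\,\mdeg(f)$, hence $\deg(W')\leq\deg(h)\leq\deg(W)\,\mdeg(f)$. Applying \eqref{eq:bezout} to $V\cap W'$ then yields
\[
\deg(f^{-1}(W))=\deg(V\cap W')\leq\deg(V)\deg(W')\leq\deg(V)\deg(W)\,\mdeg(f),
\]
which is the desired bound.

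As the surrounding text already signals, there is no genuine obstacle here: the statement becomes obvious once one recalls that the preimage is, by construction, the zero set on $V$ of one polynomial equation. The only two points calling for a moment's care are (i) passing to the squarefree part of $g$, which is precisely what makes $\deg(W)$ appear in the final bound in place of the a priori larger $\deg(g)$, and (ii) the bookkeeping for the degenerate cases in which $g$, $h$, or $f$ is constant; these are all settled by inspection.
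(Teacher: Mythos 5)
Your proof is correct and follows essentially the same route as the paper's: realize $f^{-1}(W)$ as the intersection of $V$ with the single hypersurface $(g\circ f)(\vec{x})=0$, bound the degree of that hypersurface by $\deg(W)\,\mdeg(f)$, and finish with the generalized B\'ezout inequality \eqref{eq:bezout}. Your explicit reduction to a squarefree $g$ (and the handling of constant cases) merely makes precise the step the paper compresses into ``$\deg(g)=\deg(W)$''.
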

\begin{proof}
 The variety $f^{-1}(W)$ is the intersection of $V$ with the hypersurface
 in $\mathbb{A}^m$ given by $(g\circ f)(x) = 0$. 
By Bézout, $\deg(g\circ f)\leq \deg(g) \mdeg(f)$ and $\deg(g) = \deg(W)$,
\begin{equation*}
\deg(f^{-1}(W))\leq \deg(V) \deg(g\circ f)\leq \deg(V) \deg(W) \mdeg(f). \qedhere
\end{equation*}
\end{proof}

We will use the following simple lemma in conjunction with Lemma \ref{le:invimdeg-silly}.
\begin{lemma}\label{le:inhypers}
Let $W,V\subseteq \mathbb{A}^n$ be affine varieties such that $W$ does not contain $V$. Then there is a hypersurface $W'\subseteq\mathbb{A}^n$ with $\deg(W')\leq \deg(W)$ containing $W$ but not $V$.
\end{lemma}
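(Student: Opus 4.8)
The plan is to reduce to $W$ irreducible, and then take $W'$ to be the preimage, under a suitable linear projection, of a hypersurface that cuts out the image of $W$; the bound $\deg(W')\le\deg(W)$ comes from the fact that composing with linear forms cannot raise degree, and $V\not\subseteq W'$ is arranged by choosing the projection to separate a fixed point of $V\setminus W$ from $W$. For the reduction: since $W\not\supseteq V$, fix $v\in V(\overline{K})\setminus W(\overline{K})$, so $v$ avoids every irreducible component $W_j$ of $W$. If for each $j$ we produce a hypersurface $W_j'\supseteq W_j$ with $\deg(W_j')\le\deg(W_j)$ and $v\notin W_j'(\overline{K})$, then $W':=\bigcup_j W_j'$ (defined by the product of the defining polynomials) is a hypersurface with $W\subseteq W'$, $\deg(W')\le\sum_j\deg(W_j)=\deg(W)$, and $v\notin W'(\overline{K})$, whence $V\not\subseteq W'$. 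So it suffices to treat $W$ irreducible with one point $v\notin W(\overline{K})$ to avoid; the cases $\dim(W)=n$ (then $W=\mathbb{A}^n$, no such $v$) and $\dim(W)=n-1$ (then $W$ is itself an irreducible hypersurface, take $W'=W$) being immediate, I may assume $d:=\dim(W)\le n-2$ and write $D:=\deg(W)$.

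Now I would pass to the projective closure $\overline{W}\subseteq\mathbb{P}^n$, irreducible of dimension $d$ and degree $D$, with $v$ a point off $\overline{W}$ and off the hyperplane at infinity. Since $d\le n-2$, one can choose a linear center $\Lambda\subseteq\mathbb{P}^n$ of dimension $n-d-2$ such that the projection $\pi_\Lambda:\mathbb{P}^n\setminus\Lambda\to\mathbb{P}^{d+1}$ satisfies: $\Lambda\cap\overline{W}=\emptyset$; $\pi_\Lambda$ is finite on $\overline{W}$ with image $Y=\pi_\Lambda(\overline{W})$ a hypersurface of degree $\le D$; and $v\notin\Lambda$ with $\langle\Lambda,v\rangle\cap\overline{W}=\emptyset$, where $\langle\Lambda,v\rangle$ is the $(n-d-1)$-dimensional span. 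Writing $\pi_\Lambda=(L_0:\dotsc:L_{d+1})$ in linear forms and taking a square-free equation $G(z_0,\dotsc,z_{d+1})$ for $Y$, the hypersurface $\overline{H}:=\{G(L_0,\dotsc,L_{d+1})=0\}\subseteq\mathbb{P}^n$ has degree $\le\deg(G)=\deg(Y)\le D$, contains $\overline{W}$ (since $G\circ\pi_\Lambda$ vanishes there), and misses $v$: $G(\pi_\Lambda(v))=0$ would give $\pi_\Lambda(v)\in Y$, i.e.\ $\pi_\Lambda(w)=\pi_\Lambda(v)$ for some $w\in\overline{W}$, placing $w$ in the fibre $\langle\Lambda,v\rangle\setminus\Lambda$, contrary to $\langle\Lambda,v\rangle\cap\overline{W}=\emptyset$. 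Dehomogenizing $G(L_0,\dotsc,L_{d+1})$ gives the affine hypersurface $W':=\overline{H}\cap\mathbb{A}^n$, of degree $\le D$, with $W=\overline{W}\cap\mathbb{A}^n\subseteq W'$ and $v\notin W'(\overline{K})$ — which finishes the irreducible case.

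The only real content is the existence of such a $\Lambda$; since the Grassmannian $G(n-d-1,n+1)$ of centers is irreducible, it is enough that each requirement hold for $\Lambda$ outside a proper closed subset. That $v\notin\Lambda$ and $\langle\Lambda,v\rangle\cap\overline{W}=\emptyset$ are dimension counts: not every $(n-d-2)$-plane passes through $v$, and since $\dim\langle\Lambda,v\rangle+\dim\overline{W}=(n-d-1)+d=n-1<n$ and $v\notin\overline{W}$, a generic $(n-d-1)$-plane through $v$ misses $\overline{W}$. The substantive point is that a generic linear projection of a $d$-dimensional projective variety into $\mathbb{P}^{d+1}$ is finite onto its image — hence the image does not drop in dimension and is a genuine hypersurface — and that its degree divides, in particular is $\le$, that of the source (projection formula); equivalently, a projective variety of degree $D$ is the set-theoretic intersection of hypersurfaces of degree $\le D$. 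This is classical, but it is the one step that is not a bare dimension count, and hence the main (mild) obstacle; I would either cite it or reprove it by projecting away one generic point at a time. Note that, given finiteness, even the bound $\deg(Y)\le D$ then follows as in Lemma~\ref{le:zarimdeg}, since $\pi_\Lambda$ is defined by linear forms.
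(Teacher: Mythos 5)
Your proof is correct and follows essentially the same route as the paper: reduce to $W$ irreducible with a fixed witness point of $V$ off $W$, project $W$ linearly down to a hypersurface in a $(d+1)$-dimensional ambient space, pull back its equation (degree controlled because the projection is linear, as in Lemma~\ref{le:zarimdeg}), and arrange by genericity that the witness point is not mapped into the image. The only difference is presentational -- you perform one projective projection from a linear center and dehomogenize, while the paper iterates generic affine projections one direction at a time; indeed the fallback you offer for the classical finiteness/degree fact (projecting away one generic point at a time) is exactly the paper's argument.
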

\begin{proof}
We can clearly assume that $V$ is irreducible. Then we can also
assume without loss of generality that $W$ is irreducible (as we can take
the union of hypersurfaces $W'$ at the end). Choose a point $x$ on $V$ lying outside $W$.

Say $\dim (W) = d < n-1$. The closure of the projection of $W$ in a generic direction has dimension $d$, and does not contain the projection of $x$. We iterate, and obtain that there is a projection $\pi:\mathbb{A}^n\to \mathbb{A}^{d + 1}$ with 
$\dim (\overline{\pi(W)}) = d$ and $\pi(x)$ lying outside $\overline{\pi(W)}$.
Since $\overline{\pi(W)}$ is of codimension $1$ in $\mathbb{A}^{d+1}$, it is
a hypersurface $g(y)=0$.
Then $\deg (g) = \deg (\overline{\pi(W)})$, and so,
 by Lemma \ref{le:zarimdeg}, $\deg (g) \leq \deg (W)$. Define $W'\subseteq \mathbb{A}^n$ by $(g\circ \pi)(y)=0$.
\end{proof}

\subsection{Linear algebraic groups}\label{se:linalg}

The space of $n\times n$ matrices $\mathrm{Mat}_{n}$ is the affine space $\mathbb{A}^{n^{2}}$ endowed with a {\em multiplication map}, i.e.\ a morphism $\cdot:\mathbb{A}^{n^{2}}\times\mathbb{A}^{n^{2}}\rightarrow\mathbb{A}^{n^{2}}$ defined by the usual matrix multiplication. Clearly, $\mathrm{mdeg}(\cdot)=2$. The {\em determinant} is a morphism $\det:\mathrm{Mat}_{n}\rightarrow\mathbb{A}^{1}$; it has maximum degree $\mathrm{mdeg}(\det)=n$.

The open set of $\mathrm{Mat}_{n}$ defined by $\det(x)\neq 0$ is the {\em general linear group} $\mathrm{GL}_{n}$. If necessary, we can work in a larger space so as to make $\mathrm{GL}_{n}$ a variety (i.e., Zariski-closed) and matrix inversion a morphism of $\mathrm{GL}_{n}$ into itself. For instance, we can identify $\mathrm{GL}_{n}$ with the variety of elements $(x,y)\in\mathrm{Mat}_{n}\times\mathbb{A}^{1}$ such that $\det(x)y=1$: the {\em inversion map} is then the morphism $^{-1}:\mathrm{GL}_{n}\rightarrow\mathrm{GL}_{n}$ sending $(x,y)$ to $(\mathrm{adj}(x)y,\det(x))$, where the {\em adjugate} $\mathrm{adj}(x)$ is the matrix whose $(i,j)$-th entry is $(-1)^{i+j}M_{ji}$, with $M_{ji}$ being the $(j,i)$-th minor of $x$. Note that multiplication and the determinant are still morphisms in this larger space, with the same maximum degree as before.

Following Borel~\cite[p.~51]{Bor91}, we define
a {\em linear algebraic group} $G$ to be any subvariety of $\mathrm{GL}_{n}$ closed under the multiplication and inversion maps. The maximum degree of these maps may change when restricted to $G$. The degree of $G$ and its maps may also differ depending on how we choose to represent $G$: for instance $\mathrm{SL}_{n}$ will be defined below so as to have $\mathrm{mdeg}(^{-1})=1$, whereas working with $\mathrm{SL}_{n}$ as a subvariety of $\mathrm{Mat}_{n}$ yields $\mathrm{mdeg}(^{-1})=n-1$. Throughout the rest of the paper, when dealing with a linear algebraic group we use
\begin{align}\label{eq:notation}
\delta & =\dim(G), & \Delta & =\deg(G), & \iota & =\mathrm{mdeg}(^{-1}).
\end{align}

A linear algebraic group $G$ is {\em almost simple}\footnote{Some authors call these groups {\em simple}.
``Such an algebraic group is called
{\em simple} (or {\em almost simple}, 
if we wish to emphasize that the group need not be simple as an abstract group)''
 (\cite[p.~168]{Hum95b}).
}
if it is non-abelian and has no connected, normal linear algebraic
subgroups except $\{e\}$ and $G$. (``Connected'' here means ``connected in the Zariski topology'', i.e.,
``not the union of two varieties with empty intersection''.) A moment's thought shows that having no non-trivial connected, normal linear algebraic subgroups is equivalent to having no normal linear algebraic subgroups of positive dimension. An almost simple linear algebraic group is by definition also {\em reductive} \cite[\S 11.21]{Bor91}, because the radical of $G$ is a connected normal linear algebraic subgroup of $G$. For a reductive group $G$, the {\em rank} $r$ is the dimension of a maximal torus (a {\em torus} inside a group $G$ defined over $K$ is a linear algebraic subgroup of $G$ isomorphic to $\mathrm{GL}_{1}^{m}$ over $\overline{K}$ for some $m$ \cite[\S 8.5]{Bor91}).

If $G\leq\mathrm{GL}_n$ is non-abelian, then $n\geq 2$, since $\mathrm{GL}_{1}$ is abelian. If a linear algebraic group is connected, then it is also irreducible~\cite[Cor.~1.35]{Mil17}. 
(This is the non-trivial direction; in any topology, an irreducible closed set is of course connected.)
If $G\leq\mathrm{GL}_{n}$ is connected and almost simple then $G\leq\mathrm{SL}_{n}$, by the following simple argument: $H=G\cap\mathrm{SL}_{n}$ is the kernel of the determinant map $\det|_{G}:G\rightarrow\mathbb{A}^{1}$, and thus, if $H\neq G$, then $\dim(H)=0$, because $H$ is normal in $G$ and $G$ is almost simple; since $H=(\det|_{G})^{-1}(1)$, we conclude by Theorem~\ref{th:chev} that $\dim(G)\leq 1$. 
Hence, $G$ must be either $\{e\}$, the additive group, or the multiplicative group \cite[\S 20]{Hum95a}, contradicting the fact that $G$ is non-abelian.

\subsection{Tangent spaces and adjoint representation}

Let us go over the basics.

Let $V\subseteq\mathbb{A}^n$ be an affine variety defined over $K$, and let $I(V)$ be as in \S\ref{sss:verybasic}. 
Let $x$ be a point on $V(\overline{K})$. It is clear that,
for every $y \in \mathbb{A}^n(\overline{K})$ and every $P$ in $I(V)$, the polynomial
$P_{y}(t) = P((1-t) x + t y)$ vanishes at $t=0$. The condition that the linear
term of $P_{y}(t)$ be $0$ for every $P\in I(V)$ is equivalent to $y$ lying on
an affine linear variety in $\mathbb{A}^n$, called the {\em tangent space} $T_x V$ to $V$ at $x$.  (The alternative would be to work with the Zariski tangent space, which is defined intrinsically; $T_x V$ (in our definition) and the Zariski tangent space are isomorphic \cite[Cor.~2.1]{Shafv1}.)

A point $x$ on $V$ is called {\em non-singular} if, for every irreducible component $W$ of $V$ containing $x$, $\dim (T_x W) = \dim (W)$.
If $V$ is irreducible, there is a nonempty open subset $U\subseteq V$ such that
$x$ is non-singular if and only if $x\in U(\overline{K})$ \cite[\S III.4, Prop.~2-3]{Mumford}. One can quickly
see that this statement is then true for $V$ reducible as well.
We say that $V$ is {\em non-singular} if every point $x$ on $V$ is non-singular. Algebraic groups are non-singular (as there is always a map $g\cdot$ taking the identity $e$ to a given point $g$). 

Given a morphism $f:V\to W$ for $V\subseteq \mathbb{A}^m$, $W\subseteq \mathbb{A}^n$,
formally differentiating the polynomials defining $f$ gives us a linear map 
$(D f)_x:T_x V\to T_{f(x)} W$, called the {\em derivative} or {\em differential} of
$f$ at $x$.

Let $G$ be a linear algebraic group over $K$. Write $\mathfrak{g}$ for
its tangent space at the origin $e$. Given $g\in G(\overline{K})$,
let $\Psi_g:G\to G$ be the map $x\mapsto g x g^{-1}$. Define $\Ad_g:\mathfrak{g}\to \mathfrak{g}$ to be the derivative of $\Psi_g$ at $x=e$.
We call $g\mapsto \Ad_g$ the {\em adjoint representation} of $G$. 

Write $\ad$ for the derivative of $\Ad_g$ at $g=e$; then $\ad(x)$ is a linear endomorphism of $\mathfrak{g}$. For $x,y\in \mathfrak{g}(\overline{K})$, let
$[x,y] = \ad(x)(y)$. Then $[\cdot,\cdot]$ makes 
$\mathfrak{g}(\overline{K})$ (and $\mathfrak{g}(K)$) into a Lie algebra.

\subsection{Untwisted classical groups}\label{se:chev}

An \textit{untwisted classical group} $G$ is for us one of the following: $\mathrm{SL}_{n},\mathrm{SO}_{2n}^{+},\mathrm{SO}_{2n+1},\mathrm{Sp}_{2n}$. The same groups are also referenced as ``classical Chevalley'' in \cite{Hel11}. Since the degrees of varieties and maps depend on how $G$ is embedded in the matrix space, we choose a convenient explicit embedding.

\begin{definition}\label{de:untwisted-classical}
An \textit{untwisted classical group} of rank $r$ over a field $K$ of characteristic $p\geq 0$ is one of the following affine varieties defined over $K$ under the corresponding restrictions:
\begin{align*}
\mathrm{SL}_{r+1} & =\left\{\begin{pmatrix} x_{1} & 0 \\ 0 & x_{2} \end{pmatrix}\in\mathrm{Mat}_{2r+2}:\det(x_{1})=1,x_{1}x_{2}^{\top}=\mathrm{Id}_{r+1}\right\} & & \text{($r\geq 1$),} \\
\mathrm{SO}_{2r}^{+} & =\{x\in\mathrm{Mat}_{2r}:\det(x)=1,x^{\top}M_{1}x=M_{1}\} & & \text{($p\neq 2$, $r\geq 4$),} \\
\mathrm{SO}_{2r+1} & =\{x\in\mathrm{Mat}_{2r+1}:\det(x)=1,x^{\top}M_{2}x=M_{2}\} & & \text{($p\neq 2$, $r\geq 3$),} \\
\mathrm{Sp}_{2r} & =\{x\in\mathrm{Mat}_{2r}:x^{\top}M_{3}x=M_{3}\} & & \text{($r\geq 2$),}
\end{align*}
where
\begin{align*}
M_{1} & =\begin{pmatrix} 0 & \mathrm{Id}_{r} \\ \mathrm{Id}_{r} & 0 \end{pmatrix}, &
M_{2} & =\begin{pmatrix} 0 & \mathrm{Id}_{r} & 0 \\ \mathrm{Id}_{r} & 0 & 0 \\ 0 & 0 & 1 \end{pmatrix}, &
M_{3} & =\begin{pmatrix} 0 & \mathrm{Id}_{r} \\ -\mathrm{Id}_{r} & 0 \end{pmatrix}.
\end{align*}
All of them are linear algebraic groups.
\end{definition}

\begin{remark}\label{re:untwisted}
Here are a few remarks on our choices in the definition above.
\begin{enumerate}[(i)]
\item All of these affine varieties are defined in such a way that the maximum degree $\iota$ of the inverse map is $1$. Clearly, one could define $\SL_{r+1}$ instead in the usual way, namely as the subvariety of $\mathrm{Mat}_{r+1}$ given by $\det(g)=1$, but $\iota$ would then be $r$. We will generally work with $\mathrm{SL}_{r+1}$ as in Definition~\ref{de:untwisted-classical}, but the usual embedding of $\SL_{r+1}$ is covered by the first part of Theorem~\ref{th:main} (since it is connected and almost simple). Remark~\ref{re:slnusual} describes further bounds for the usual embedding of $\SL_{r+1}$, with larger $C_{1}$ and smaller $C_{2}$ than in Theorem~\ref{th:main}.
\item\label{re:untwisted-minus} The matrices $M_{1},M_{2},M_{3}$ above are a standard choice, as in \cite[Prop.~2.5.3(i)]{KL90}, \cite[Prop.~2.5.3(iii)]{KL90}, and \cite[Prop.~2.4.1]{KL90} respectively. The plus sign in the notation $\mathrm{SO}_{2r}^{+}$ is necessary over the finite field $\mathbb{F}_{q}$, as a different choice of $M_{1}$ may lead to a different group $G(\mathbb{F}_{q})$; this does not happen for instance over algebraically closed fields, where all non-degenerate quadratic forms are equivalent. For comments about $\mathrm{SO}_{2r}^{-}$, which is twisted  (but may also be defined from a different $M_{1}$ as in \cite[Prop.~2.5.3(ii)]{KL90}), see \S\ref{se:concl}.
\item\label{re:untwisted-o2} We restrict to $p\neq 2$ for the orthogonal cases $\mathrm{SO}_{2r}^{+},\mathrm{SO}_{2r+1}$ for two distinct reasons relating to our interest in groups over $\mathbb{F}_{q}$. For the latter group, when $q$ is even we have $\mathrm{SO}_{2r+1}(\mathbb{F}_{q})\simeq\mathrm{Sp}_{2r}(\mathbb{F}_{q})$: see \cite[\S 3.4.7]{Wil09}. As for the former, $\mathrm{SO}_{2r}^{+}$ is not connected for $q$ even: in fact, it has two connected components, distinguished by the Dickson invariant (see \cite[\S II.10]{Die55} for details). All untwisted classical groups in our definition are connected \cite[Ex.~2.42-2.43-2.44]{Mil17}.
\item The reason behind the restrictions on the rank $r$ is that some of the groups taken out by these restrictions are not almost simple ($\SO_{2}^{+}$ is abelian, $\SO_{4}^{+}$ has the group of left- (or right-)isoclinic rotations as a normal subgroup) and the others are isomorphic to other groups already considered ($\Sp_2$ is isomorphic to $\SL_2$, and $\SO_3(\mathbb{F}_q)$ has a subgroup $\Omega_3(\mathbb{F}_q)$ of index $2$ isomorphic to $\mathrm{PSL}_2(\mathbb{F}_q)$). See, e.g., \cite[\S 1.2]{Wil09} or \cite[Ch.~2]{zbMATH03906699}.
\item\label{re:untwisted-as} $G$ is almost simple in the cases $G=\mathrm{SL}_{r+1},\mathrm{Sp}_{2r}$, but is not almost simple in the cases $G=\mathrm{SO}_{2r}^{+},\mathrm{SO}_{2r+1}$. If they were, then $G(\mathbb{F}_q)/Z(G(\mathbb{F}_q))$ would be simple as an abstract group, as shown for instance in \cite[\S 29.5]{Hum95a}; however, when $q$ is odd the quotient above has proper simple normal subgroups $\mathrm{P}\Omega_{2r}^{+}(q)$ and $\mathrm{P}\Omega_{2r+1}(q)$ (see \cite[\S 3.7.1]{Wil09}). In this paper we study the orthogonal groups and sacrifice almost simplicity rather than focusing on their simple subgroups, because the latter ones cannot be the $\mathbb{F}_{q}$-points of a variety of degree independent from $q$, or else we would violate Proposition~\ref{pr:lwnotq}.
\end{enumerate}
\end{remark}

We have talked above about a few properties of the untwisted classical groups $G$, such as connectedness, almost simplicity, and the maximal degree $\iota$ as defined in \eqref{eq:notation}. Let us list some other useful facts. Their dimensions $\delta$ are
\begin{align*}
\dim(\mathrm{SL}_{r+1}) & =r^{2}+2r, & \dim(\mathrm{SO}_{2r}^{+}) & =2r^{2}-r, & \dim(\mathrm{SO}_{2r+1})=\dim(\mathrm{Sp}_{2r}) & =2r^{2}+r.
\end{align*}
If we work in the finite field $\mathbb{F}_{q}$, their sizes are given in~\cite[\S 3.3.1, \S 3.5, \S 3.7.2, \S 3.8.2]{Wil09}. For us, it suffices to know that $|G(\mathbb{F}_{q})|\geq q^{\delta-r}(q-1)^{r}$ (see Proposition~\ref{pr:lwlowg}) and that $|G(\mathbb{F}_{q})|\leq 2q^{\delta}$.

We will also work with their Lie algebras. For $G\leq\mathrm{GL}_{N}$ as above, the corresponding Lie algebra $\mathfrak{g}\subseteq\mathrm{Mat}_{N}$ is a variety with $\dim(\mathfrak{g})=\delta$, $\deg(\mathfrak{g})=1$, and $|\mathfrak{g}(\mathbb{F}_{q})|=q^{\delta}$.

\subsection{$K$-points outside varieties}\label{subs:pointsoutV}

Knowing that we are dealing with a proper subvariety is not always enough: if $K$ is not algebraically closed, we may have $V(K)=W(K)$ even if $V\subsetneq W$. Thus, we need a result that ensures the existence of a $K$-point outside a proper subvariety of $G$. The restriction on $|K|$ in the main theorem originates here.

Let us first consider the case of $K$ infinite.

\begin{lemma}\label{lem:kperf}
Let $G\leq\mathrm{GL}_{n}$ be a connected reductive linear algebraic group over an infinite field $K$. Let $V\subseteq\mathrm{Mat}_{n}$ be a variety defined over $\overline{K}$. If $G\not\subseteq V$, then $G(K)\not\subseteq V(K)$.
\end{lemma}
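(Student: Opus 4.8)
The plan is to reduce the statement to the density of $K$-points in a connected reductive group, which is classical. First I would observe that, since $G\not\subseteq V$, the intersection $G\cap V$ is a proper closed subvariety of $G$; because $G$ is connected and hence irreducible (\cite[Cor.~1.35]{Mil17}, as recalled in \S\ref{se:linalg}), $G\cap V$ has dimension strictly less than $\dim(G)$. So it suffices to produce a point of $G(K)$ lying outside an arbitrary proper closed subvariety $W\subsetneq G$. Equivalently, I want to know that $G(K)$ is Zariski-dense in $G$.

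The key input is that a connected reductive group over an infinite field $K$ is unirational over $K$, and therefore $G(K)$ is Zariski-dense in $G$; this is Borel's theorem \cite[\S 18.2]{Bor91} (see also \cite[Cor.~18.3]{Bor91}). Here is where the hypotheses ``connected'' and ``reductive'' and ``$K$ infinite'' are all used: unirationality can fail for non-reductive groups over imperfect fields, and density of rational points obviously fails over finite fields. Granting density, the argument concludes immediately: if $G(K)\subseteq W(\overline{K})$ for a proper closed $W\subsetneq G$, then the Zariski closure of $G(K)$ would be contained in $W(\overline{K})\subsetneq G(\overline{K})$, contradicting density. Applying this with $W$ an irreducible component of $G\cap V$ of top dimension (or simply with $W = G\cap V$ itself, viewed as a subvariety of $G$) finishes the proof.

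I would phrase the write-up so as to isolate exactly the statement needed: \emph{for $G$ connected reductive over an infinite field $K$, $G(K)$ is dense in $G$}, cite Borel for it, and then spend one line deducing the lemma. One subtlety worth a remark: $V$ is only assumed defined over $\overline{K}$, not over $K$, but this causes no trouble, since the conclusion $G(K)\not\subseteq V(K)$ only concerns $K$-points of $G$ versus the closed set $V(\overline K)$, and density of $G(K)$ in $G(\overline K)$ (in the Zariski topology over $\overline K$) is insensitive to the field of definition of $V$.

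The main obstacle is essentially bookkeeping rather than mathematics: making sure that ``connected reductive $\Rightarrow$ unirational $\Rightarrow$ rational points dense'' is quoted in a form valid over an arbitrary infinite field (including imperfect ones), since some textbook statements assume perfectness. Borel's \cite[\S 18]{Bor91} does treat the general reductive case over an arbitrary infinite ground field, so the citation goes through; if one preferred a more elementary route one could instead use that $G$ contains a maximal torus $T$ defined over a finite separable extension together with a big cell / Bruhat-type open subset, but invoking unirationality directly is cleanest and is all that is needed here.
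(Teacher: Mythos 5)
Your proposal is correct and follows essentially the same route as the paper: reduce to showing $G(K)$ is Zariski-dense in $G$, and obtain density from Borel's unirationality theorem for connected reductive groups over an infinite field. The only difference is that you quote the density statement (\cite[Cor.~18.3]{Bor91}) as a black box, whereas the paper spells out the one-step deduction via a dominant $K$-morphism from an open subset of affine space together with the density of $\mathbb{A}^m(K)$; this is cosmetic.
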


\begin{proof}
If $G$ is connected and reductive, then it is {\em unirational} \cite[Thm.~18.2]{Bor91}. We call a morphism $f:X\to Y$ {\em dominant} if $Y = \overline{f(X)}$. Unirationality implies that there is a dominant morphism $f:U\rightarrow G$, where $U$ is a nonempty open subset of the affine space $\mathbb{A}^{m}$ for some $m$, with $U,f$ both defined over $K$ \cite[\S AG.13.4, AG.13.7]{Bor91}. 

Since $K$ is infinite, we know that $\mathbb{A}^{m}(K)$ is dense in $\mathbb{A}^{m}$.\footnote{This can be shown easily by  induction on $m$.
Let $V\subseteq \mathbb{A}^m$ contain $\mathbb{A}^m(K)$.
Then, for every $x_m\in K$, the intersection $V\cap (\mathbb{A}^{m-1}\times \{x_m\})$ contains every point of $\mathbb{A}^{m-1}(K)\times \{x_m\}$, and thus, by the inductive hypothesis, it must equal $\mathbb{A}^{m-1}\times \{x_m\}$. However, $V$ cannot contain the infinitely many $(m-1)$-dimensional varieties $\mathbb{A}^{m-1}\times \{x_n\}$ unless $V$ is $m$-dimensional.
Hence, $V = \mathbb{A}^m$.}
Since $\mathbb{A}^{m}(K)$ is dense in $\mathbb{A}^{m}$, so is $U(K)$. Thus, the image of $U(K)$ via the dominant map $f$ is also dense in $G$, and it is contained in $G(K)$ because $f$ is defined over $K$. Therefore $G(K)$ is dense in $G$, which means that no proper subvariety of $G$ can contain the whole $G(K)$.
\end{proof}

If $K=\mathbb{F}_{q}$ is a finite field, we need to provide instead upper and lower bounds on the number of points on a variety: if there is a gap between the number of points on $G$ and on a proper subvariety thereof, we are done.

Providing an upper bound on the number of points with coordinates in $\mathbb{F}_q$ on a hypersurface, say, is very easy: the problem just reduces to the fact that a polynomial equation of degree $n$ has at most $n$ solutions. One can then generalize this result in non-trivial ways. One generalization is the familiar Schwartz-Zippel or DeMillo-Lipton-Schwartz-Zippel lemma \cite{Sch80}; there, the coordinates are constrained to lie in a subset of $\mathbb{F}_q$. 

We will need a different generalization, in two distinct senses. First, we want to bound the number of points on an arbitrary variety $V$, not just on a hypersurface. Second, we do not want to require $V$ to be defined over $\mathbb{F}_q$; we want to allow $V$ to be defined over an extension of $\mathbb{F}_q$, while still counting only the points with coordinates over $\mathbb{F}_q$. Fortunately, there is just such a generalization in the literature.

\begin{proposition}\label{pr:lwnotq}\cite[Prop.~2.3]{LR15}
Let $V\subseteq\mathbb{A}^{m}$ be a variety defined over $\overline{\mathbb{F}_{q}}$, of dimension $d$ and degree $D$. Then $|V(\overline{\mathbb{F}_{q}})\cap\mathbb{A}^{m}(\mathbb{F}_{q})|\leq Dq^{d}$.
\end{proposition}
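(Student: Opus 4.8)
The plan is to bound $|V(\overline{\mathbb{F}_q})\cap \mathbb{A}^m(\mathbb{F}_q)|$ by induction on $d=\dim(V)$, reducing the general case to the hypersurface case via generic linear projections, exactly the kind of argument used repeatedly in \S\ref{sss:degree}. First I would dispose of $d=0$: a zero-dimensional variety of degree $D$ has exactly $D$ points over $\overline{\mathbb{F}_q}$ (counting components), hence at most $D$ points over $\mathbb{F}_q$, giving $Dq^0=D$. For the inductive step with $d\geq 1$, I would first reduce to $V$ irreducible, since $\deg(V)$ is the sum of the degrees of the irreducible components and the point counts add up. So assume $V\subseteq\mathbb{A}^m$ is irreducible of dimension $d$ and degree $D$.

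The key move is to slice $V$ by the fibres of a coordinate (or generic linear) projection $\pi:\mathbb{A}^m\to\mathbb{A}^{m-1}$. After a suitable $\overline{\mathbb{F}_q}$-linear change of coordinates that is in fact defined over $\mathbb{F}_q$ (this is the point where one must be slightly careful, see below), one arranges that $\pi$ restricted to $V$ is a finite map onto its image, i.e.\ generic fibres are finite of size at most $D$; more precisely one wants that for \emph{every} $\mathbb{F}_q$-point $y$ in the image, the fibre $\pi^{-1}(y)\cap V$ is either finite of size $\le D$ or, for the finitely many ``bad'' $y$, contributes a lower-dimensional piece. A cleaner route, and the one I expect is used in \cite{LR15}: choose coordinates so that the last coordinate $x_m$ is ``in general position'' with respect to $V$, decompose $V(\mathbb{F}_q)$ according to the value $c\in\mathbb{F}_q$ of $x_m$. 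Each slice $V\cap\{x_m=c\}$ is a variety of dimension $\le d-1$; by B\'ezout (the reduced intersection with a hyperplane, as in \S\ref{sss:degree}) its degree is $\le D$; and it is defined over $\overline{\mathbb{F}_q}$. By the inductive hypothesis each slice has at most $Dq^{d-1}$ points over $\mathbb{F}_q$, so summing over the $q$ possible values of $c$ gives $|V(\overline{\mathbb{F}_q})\cap\mathbb{A}^m(\mathbb{F}_q)|\leq q\cdot Dq^{d-1}=Dq^{d}$, as desired.

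The one subtlety to get right is the change of coordinates: a generic linear change of variables defined over $\overline{\mathbb{F}_q}$ puts $V$ in the desired position, but we must ensure we can do this over $\mathbb{F}_q$ itself (since otherwise the $\mathbb{F}_q$-points are not preserved). Because ``general position'' is a nonempty Zariski-open condition on the space of linear changes of variable, and $\mathbb{F}_q$-points are Zariski-dense in that parameter space \emph{provided $q$ is large enough}, one would either (a) take $q$ large — but the statement claims no such restriction — or (b) avoid the change of coordinates entirely by arguing directly: for an irreducible $V$ of positive dimension, at most $D$ of the hyperplanes $\{x_m=c\}$ (over any field) can contain $V$, in fact none can since $V$ is irreducible of dimension $d$ and such a hyperplane containing $V$ would force $x_m$ constant on $V$; more to the point, one notes that \emph{some} coordinate $x_j$ is non-constant on $V$ and slices along that one. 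Thus the genuinely clean argument slices $V$ by the level sets of a single coordinate function $x_j$ that is not identically constant on $V$: each level set has dimension $\le d-1$ and degree $\le D$ by B\'ezout, and there is no issue of field of definition because coordinate hyperplanes are defined over the prime field. The induction then closes with no hypothesis on $q$, matching the statement.

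I expect the main obstacle to be purely bookkeeping: making sure that (i) the hyperplane slices genuinely have dimension strictly less than $d$ — which requires choosing the coordinate $x_j$ to be non-constant on every irreducible component after the reduction to the irreducible case, and (ii) the degree bound $\deg(V\cap\{x_j=c\})\le \deg(V)=D$ is the \emph{reduced} (set-theoretic) intersection bound, which is exactly the version of B\'ezout recorded in \eqref{eq:bezout} and is valid even when the scheme-theoretic intersection would misbehave. Neither step is deep; the whole proof is a one-variable induction stapled to B\'ezout, and the real content is the clean statement that the degree controls the $\mathbb{F}_q$-point count with the naive exponent $q^{\dim V}$.
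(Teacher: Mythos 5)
Your argument is correct, and it is worth noting that the paper itself does not prove this proposition at all: it is imported as a black box from the cited reference \cite[Prop.~2.3]{LR15}, so there is no internal proof to compare against. Your coordinate-slicing induction is the standard elementary argument (and essentially the one in the cited source): reduce to $V$ irreducible since degrees and point counts both add over components; for $d=0$ the degree is exactly the number of $\overline{\mathbb{F}_q}$-points; for $d\geq 1$ pick a coordinate $x_j$ not constant on the irreducible $V$ (possible, else $V$ is a point), so that each slice $V\cap\{x_j=c\}$ is a proper closed subset, hence of dimension $\leq d-1$, and of degree $\leq D$ by the reduced Bézout inequality \eqref{eq:bezout}; summing the inductive bound $Dq^{d-1}$ over the $q$ values $c\in\mathbb{F}_q$ closes the induction. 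You were also right to discard the generic-linear-projection variant: a ``generic'' change of coordinates need not exist over $\mathbb{F}_q$ when $q$ is small, whereas coordinate hyperplanes are defined over the prime field, so your route (b) gives the statement with no hypothesis on $q$, exactly as claimed. The only bookkeeping points — that the chosen coordinate must be non-constant on the (irreducible) variety being sliced, and that the degree bound is the set-theoretic Bézout bound recorded in the paper — are both handled explicitly in your write-up.
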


For a lower bound for general varieties, one can rely on explicit variants of the Lang-Weil bounds~\cite{LW54}, for instance the results of Cafure and Matera \cite{CM06}. However, for connected reductive groups there exist stronger bounds on the number of $\mathbb{F}_{q}$-points. There are exact expressions for $|G(\mathbb{F}_{q})|$ in the literature, as in \cite[p.~75]{Car93} (or \cite[Thm.~9.4.10]{Car89} for Chevalley groups). What we give here is a proof of a lower bound that is close to the actual value and as self-contained as possible.

\begin{proposition}\label{pr:lwlowg}
Let $G$ be a connected reductive linear algebraic group of rank $r$ over $\mathbb{F}_{q}$. Then
\begin{equation*}
|G(\mathbb{F}_{q})|\geq q^{\dim(G)-r}(q-1)^{r}.
\end{equation*}
\end{proposition}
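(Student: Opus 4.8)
The plan is to reduce to the case of a split group via a standard base-change/Lang-type argument, and then to count $\mathbb{F}_q$-points of a split reductive group using its big-cell decomposition. First I would recall that $G$ becomes split over a finite extension $\mathbb{F}_{q^k}$, and that — since we only want a lower bound valid over $\mathbb{F}_q$ — it suffices to treat the split case directly, because a maximal torus of $G$ over $\mathbb{F}_q$ always contains a maximal split subtorus of some dimension $r'\le r$, and $|\gm^m(\mathbb{F}_q)| = (q-1)^m$ while a non-split one-dimensional torus still has at least $q-1$ points (a norm-one torus $\mathrm{R}^{(1)}_{\mathbb{F}_{q^2}/\mathbb{F}_q}\gm$ has $q+1\ge q-1$ points); more cleanly, I would invoke the fact that for {\em any} torus $T$ of rank $r$ over $\mathbb{F}_q$ one has $|T(\mathbb{F}_q)| \geq (q-1)^r$, which follows from the description of $T(\mathbb{F}_q)$ via the characteristic polynomial of Frobenius acting on the cocharacter lattice (the eigenvalues are roots of unity, so $|T(\mathbb{F}_q)| = \prod_i (q - \zeta_i) \geq (q-1)^r$ in absolute value, and this count is a genuine integer equal to that product).

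The core of the argument is the Bruhat decomposition / big cell. Fix a maximal torus $T\leq G$ over $\mathbb{F}_q$ of rank $r$, and opposite Borel subgroups $B = T U$ and $B^- = T U^-$ (all definable over $\mathbb{F}_q$ after passing to a suitable torus; if $G$ is not quasi-split over $\mathbb{F}_q$ this needs a little care, but the inequality we want is weak enough to survive). The product map $U^- \times T \times U \to G$ is an open immersion onto the big cell $\Omega$, and $U^-, U$ are isomorphic as varieties (not as groups) to affine spaces $\mathbb{A}^{(\delta - r)/2}$. Hence $|\Omega(\mathbb{F}_q)| = q^{(\delta-r)/2}\cdot |T(\mathbb{F}_q)| \cdot q^{(\delta-r)/2} = q^{\delta - r}|T(\mathbb{F}_q)| \geq q^{\delta-r}(q-1)^r$, using the torus bound from the previous step. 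Since $\Omega(\mathbb{F}_q)\subseteq G(\mathbb{F}_q)$, we are done. I would present this as the main line of proof, citing $\cite{Bor91}$ (e.g.\ the structure of reductive groups, the big cell in \S14) for the decomposition $\Omega \cong U^- \times T \times U$ and for $U^\pm$ being affine spaces over $\overline{\mathbb{F}_q}$, together with the fact that these unipotent radicals and the Borels descend to $\mathbb{F}_q$ for a suitable choice of maximal torus (alternatively, pass to $\mathbb{F}_{q^k}$ to split everything, which does not help the lower bound over $\mathbb{F}_q$, so some honest work with quasi-split forms is needed).

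The step I expect to be the main obstacle is handling groups that are {\em not} quasi-split over $\mathbb{F}_q$, where a Borel subgroup need not be defined over $\mathbb{F}_q$ and the naive big-cell count does not literally apply. There are two ways around this. One is to use that every connected reductive group over a finite field {\em is} quasi-split (Lang's theorem implies $H^1(\mathbb{F}_q, G) $ trivial, and a theorem of Steinberg/Lang shows reductive groups over finite fields are quasi-split), so a Borel {\em is} defined over $\mathbb{F}_q$ and $U^\pm$ are $\mathbb{F}_q$-forms of affine space — but an $\mathbb{F}_q$-form of $\mathbb{A}^m$ as a variety still has exactly $q^m$ points (it is a successive extension of one-dimensional unipotent groups $\mathbb{G}_a$, each contributing a factor $q$), so the count goes through unchanged. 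The second, more elementary route, if one wants to avoid quoting quasi-splitness, is to note that over $\overline{\mathbb{F}_q}$ the big cell gives $|\Omega(\overline{\mathbb{F}_q})|$-style information and then run a direct Lang-Weil-free count on the $\mathbb{F}_q$-points of the (possibly twisted) unipotent pieces using that any connected unipotent group over $\mathbb{F}_q$ in characteristic $p$ has exactly $q^{\dim}$ points. I would adopt the first route for brevity, stating quasi-splitness of reductive groups over finite fields as a known fact.
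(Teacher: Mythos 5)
Your proposal is correct and follows essentially the same route as the paper's proof: embed the big cell $U^{-}\times T\times U$ (with Borel and torus defined over $\mathbb{F}_q$, which for a connected group over a finite field is guaranteed by Lang-type results, cf.\ \cite[Prop.~16.6]{Bor91}), count $q^{\dim}$ points on each connected unipotent factor, and bound $|T(\mathbb{F}_q)|=\prod_i(q-\zeta_i)\geq(q-1)^r$ using that the Frobenius eigenvalues on the (co)character lattice are roots of unity. The only cosmetic difference is that the paper verifies directly that the opposite Borel is defined over $\mathbb{F}_q$ via Galois-invariance of the root system, rather than invoking quasi-splitness as a named fact, and its first paragraph's tentative ``reduce to the split case'' detour is unnecessary, as you yourself note.
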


\begin{proof}
For any connected $G$, a {\em Borel subgroup} is a maximal connected solvable subgroup of $G$ \cite[\S 11.1]{Bor91}.  Any connected $G$ defined over $\mathbb{F}_{q}$ has a Borel subgroup $B$ defined over $\mathbb{F}_{q}$ \cite[Prop.~16.6]{Bor91}. Since $B$ is connected and defined over $\mathbb{F}_{q}$, it has a maximal torus $T$ defined over $\mathbb{F}_{q}$ \cite[Prop.~16.6]{Bor91}, and $T$ is also a maximal torus of $G$ itself \cite[Cor.~11.3(1)]{Bor91}. Since $B$ is connected, solvable and defined over $\mathbb{F}_{q}$, its unipotent part $U$ is connected and $\mathbb{F}_{q}$-closed \cite[Thm.~10.6(1)]{Bor91}, and since $U$ is $\mathbb{F}_{q}$-closed it is defined over $\mathbb{F}_{q}$, because $\mathbb{F}_{q}$ is perfect \cite[AG.12.2]{Bor91}. For $B,T,U$ as above we have $B=T\ltimes U$ \cite[Thm.~10.6(4)]{Bor91}, and the map $T\times U\rightarrow T\ltimes U$ that multiplies the matrices while ignoring the group structure is an isomorphism of varieties, trivially defined over $\mathbb{F}_{q}$.

If $G$ is also reductive, for fixed $B,T$ as above there is a unique {\em opposite} Borel subgroup $B'$. Two equivalent characterizations of $B'$ are: the unique $B'$ for which $B\cap B'=T$, and, if $\Phi=\Phi(T,B)$ is the root system of $B$ with respect to $T$, the $B'$ whose root system is $-\Phi$ \cite[Thm.~14.1]{Bor91}. For either $B$ or $B'$, both of which are normalized by $T$ since they both contain it, being defined over $\mathbb{F}_{q}$ is equivalent to their root systems being invariant under the action of $\mathrm{Gal}(\overline{\mathbb{F}_{q}}/\mathbb{F}_{q})$ \cite[Lem.~20.3]{Bor91}; therefore, since the two root systems are opposite to each other (and one is invariant if and only if the other is), $B'$ is defined over $\mathbb{F}_{q}$ too. Arguing as for $B$, we see that the unipotent part $U'$ of $B'$ is connected and defined over $\mathbb{F}_{q}$. The product map $B\times U'$ is an isomorphism onto an open subset of $G$ \cite[Cor.~14.14]{Bor91}; again, the product map is trivially defined over $\mathbb{F}_{q}$. Putting together the two maps, there is then an injective map $\phi:U\times T\times U'\rightarrow G$ defined over $\mathbb{F}_{q}$. Since $\phi,U,U',T$ are all defined over $\mathbb{F}_{q}$, we obtain $\phi(U(\mathbb{F}_{q})\times T(\mathbb{F}_{q})\times U'(\mathbb{F}_{q}))\subseteq G(\mathbb{F}_{q})$, and since $\phi$ is injective we conclude that $|G(\mathbb{F}_{q})|\geq|U(\mathbb{F}_{q})||T(\mathbb{F}_{q})||U'(\mathbb{F}_{q})|$. Hence, we have reduced the problem of counting points to the connected unipotent and torus cases.

For $U$ connected unipotent defined over $\mathbb{F}_{q}$, $|U(\mathbb{F}_{q})|=q^{\dim(U)}$ \cite[I.1.4]{Oes84}. For $T$ a torus defined over $\mathbb{F}_{q}$, $|T(\mathbb{F}_{q})|=\det(q-\psi)$ where $\psi$ is the action of the Frobenius automorphism on the character group of $T$ \cite[I.1.5]{Oes84}. The torus splits in some finite extension $\mathbb{F}_{q^{m}}$ of $\mathbb{F}_{q}$, so up to conjugation its elements are diagonal with entries in $\mathbb{F}_{q^{m}}$. 

For all $g\in T$ and all characters $\chi\in\hat{T}$, the action of $\psi$ is defined as $\psi(\chi)(g)=\chi(g^{q})$.
Thus, $\psi^{m}$ acts trivially on all characters of $T$, because $x^{q^{m}}=x$ for all $x\in\mathbb{F}_{q^{m}}$. Hence, the eigenvalues $\lambda_{i}$ of $\psi$ are all $m$-th roots of unity, and so $|T(\mathbb{F}_{q})|=\prod_{i}(q-\lambda_{i})\geq(q-1)^{\dim(T)}$.
Since $\phi$ is an isomorphism onto an open set of $G$, which is also dense because $G$ is irreducible, we have $\dim(U_{1})+\dim(U_{2})+\dim(T)=\dim(G)$. By definition, $r=\dim(T)$. The result follows.
\end{proof}

Putting together the two bounds, we obtain the following.

\begin{corollary}\label{co:langweil}
Let $G\leq\mathrm{GL}_{n}$ be a connected reductive linear algebraic group of rank $r$ with $\dim(G)=\delta$ and $\deg(G)=\Delta$ over $\mathbb{F}_{q}$, and let $V\subseteq\mathrm{Mat}_{n}$ be a variety of degree $D$ defined over $\overline{\mathbb{F}_{q}}$. If $q>r+\Delta D$ and $G\not\subseteq V$, then $G(\mathbb{F}_{q})\not\subseteq V(\overline{\mathbb{F}_{q}})$.
\end{corollary}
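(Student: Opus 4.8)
The plan is a contradiction argument pitting an upper bound against a lower bound for $|G(\mathbb{F}_q)|$: if $G(\mathbb{F}_q)$ were contained in $V(\overline{\mathbb{F}_q})$, then it would be contained in the \emph{proper} subvariety $G\cap V$, and Proposition~\ref{pr:lwnotq} would force $|G(\mathbb{F}_q)|$ to be small, whereas Proposition~\ref{pr:lwlowg} says it is large. So I would assume, for contradiction, that $G(\mathbb{F}_q)\subseteq V(\overline{\mathbb{F}_q})$.

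Next I would set $W=G\cap V$, viewing $V$ inside the ambient affine space in which $G$ is realized (this does not change $\deg(V)$, nor the count of $\mathbb{F}_q$-points of $G$ lying on $V$). Since $G$ is connected it is irreducible, so, as $G\not\subseteq V$, the variety $W$ is a \emph{proper} subvariety of $G$; hence $\dim(W)\leq\delta-1$, and by Bézout \eqref{eq:bezout} we get $\deg(W)\leq\Delta D$. Every point of $G(\mathbb{F}_q)$ is an $\mathbb{F}_q$-point of $W$, so Proposition~\ref{pr:lwnotq} yields
\[
|G(\mathbb{F}_q)|\leq\deg(W)\,q^{\dim(W)}\leq\Delta D\,q^{\delta-1}.
\]

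Finally I would bring in Proposition~\ref{pr:lwlowg}, which gives $|G(\mathbb{F}_q)|\geq q^{\delta-r}(q-1)^r$. By Bernoulli's inequality, $(q-1)^r=q^r(1-1/q)^r\geq q^r(1-r/q)=q^{r-1}(q-r)$, so $|G(\mathbb{F}_q)|\geq q^{\delta-1}(q-r)$. Combining with the upper bound and dividing by $q^{\delta-1}>0$ gives $q-r\leq\Delta D$, i.e.\ $q\leq r+\Delta D$, contradicting the hypothesis $q>r+\Delta D$; hence $G(\mathbb{F}_q)\not\subseteq V(\overline{\mathbb{F}_q})$. I do not expect any genuine obstacle here: the one step that requires a moment's care is the application of Bernoulli's inequality, which is precisely what converts the clean hypothesis $q>r+\Delta D$ into the multiplicative gap $(q-1)^r>\Delta D\,q^{r-1}$ demanded by the two point-counting estimates.
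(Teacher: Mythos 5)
Your proposal is correct and follows essentially the same route as the paper: intersect $V$ with the irreducible group $G$ to get a proper subvariety of dimension $\leq\delta-1$ and degree $\leq\Delta D$ by Bézout, then play the upper bound of Proposition~\ref{pr:lwnotq} against the lower bound of Proposition~\ref{pr:lwlowg}, with Bernoulli's inequality turning $(q-1)^r$ into $q^{r-1}(q-r)$. The only cosmetic difference is that you argue by contradiction while the paper directly shows $|G(\mathbb{F}_q)\setminus(G(\mathbb{F}_q)\cap V(\overline{\mathbb{F}_q}))|>0$; the estimates are identical.
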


\begin{proof}
The intersection $V\cap G$ is a variety of dimension $\leq\delta-1$, since it is properly contained in $G$ irreducible, and by B\'ezout it has degree $\leq\Delta D$. Therefore $V(\overline{\mathbb{F}_{q}})\cap\mathbb{A}^{m}(\mathbb{F}_{q}) \leq\Delta D q^{\delta-1}$ by Proposition~\ref{pr:lwnotq}. Using Proposition~\ref{pr:lwlowg} on $G$ we obtain
\begin{align*}
|G(\mathbb{F}_{q})\setminus (G(\mathbb{F}_q)\cap V(\overline{\mathbb{F}_{q}}))| & \geq q^{\delta}\left(1-\frac{1}{q}\right)^{r}-\Delta Dq^{\delta-1}\geq q^{\delta}\left(1-\frac{r+\Delta D}{q}\right).
\end{align*}
By our choice of $q$, the bound above is positive.
\end{proof}

With a little more work, when $G$ is an untwisted classical group, our condition on $q$ can be improved from what we would have if we just used Corollary~\ref{co:langweil} as it is. 
We will nearly eliminate the factor of $\Delta = \deg(G)$ (which can be 
rather large), in part by using a standard trick that originated with Cayley.

\begin{corollary}\label{co:lwchev}
Let $G\leq\mathrm{GL}_{N}$ be an untwisted classical group of rank $r$ over $\mathbb{F}_{q}$, and let $V\subseteq\mathrm{Mat}_{N}$ be a variety defined by a single equation $F=0$, where $F$ is a polynomial of degree $D$ over $\overline{\mathbb{F}_{q}}$ that does not vanish identically on $G$. If $q\geq 5 (r+1)^{2}D$, then $G(\mathbb{F}_{q})\setminus(G(\mathbb{F}_q)\cap V(\overline{\mathbb{F}_{q}}))\neq\emptyset$.
\end{corollary}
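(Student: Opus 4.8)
The plan is to produce a point of $G(\mathbb{F}_q)$ off the hypersurface $V=\{F=0\}$ by a counting argument, but one subtler than in Corollary~\ref{co:langweil}. That corollary already does the job once $q>r+\Delta D$ with $\Delta=\deg(G)$; the obstruction is that $\Delta$ is enormous for the present families (in particular exponential in $r$ for the orthogonal and symplectic groups), so it must be eliminated. The idea is to transport the problem, family by family, onto a ``cheap'' birational model of $G$ — affine space, or affine space with a few coordinate hyperplanes removed — by a dominant rational map whose numerator and denominator have degree bounded by a polynomial in $r$ alone. Pulling $F$ back along such a map (clearing denominators) produces a polynomial of degree $O(r^{2}D)$ — not one governed by $\Delta$ — which does not vanish identically because $F$ does not vanish identically on $G$ and the map is dominant; then Proposition~\ref{pr:lwnotq}, together with either Proposition~\ref{pr:lwlowg} or the trivial count $q^{\delta}$ of $\mathbb{F}_q$-points of affine space, yields an $\mathbb{F}_q$-point of the model away from the bad locus, hence the desired point of $G(\mathbb{F}_q)$ not on $V$.

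For $G=\mathrm{SL}_{r+1}$ the model is the usual embedding. On $G$ as in Definition~\ref{de:untwisted-classical} we have $\det x_1=1$ and $x_1x_2^{\top}=\mathrm{Id}$, hence $x_2=\mathrm{adj}(x_1)^{\top}$, so $x_1\mapsto(x_1,\mathrm{adj}(x_1)^{\top})$ is an isomorphism over the prime field between $G$ and $\mathrm{SL}_{r+1}\subseteq\mathrm{Mat}_{r+1}$ in its standard embedding; restricting $F$ to $G$ and substituting $x_2=\mathrm{adj}(x_1)^{\top}$ turns $F$ into $\tilde F(x_1)$, a polynomial of degree $\leq rD$ (the entries of the adjugate being $r\times r$ minors) that does not vanish identically on $\mathrm{SL}_{r+1}$. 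No parametrization is needed here: in the standard embedding $\mathrm{SL}_{r+1}=\{\det=1\}$ is a hypersurface, so $\deg(\mathrm{SL}_{r+1})\leq r+1$, and $W:=\mathrm{SL}_{r+1}\cap\{\tilde F=0\}$ is a proper subvariety of the irreducible group $\mathrm{SL}_{r+1}$, hence of dimension $\leq\delta-1$ and, by B\'ezout, of degree $\leq(r+1)rD$. Thus $|W(\mathbb{F}_q)|\leq(r+1)rD\,q^{\delta-1}$ by Proposition~\ref{pr:lwnotq}, whereas $|\mathrm{SL}_{r+1}(\mathbb{F}_q)|\geq q^{\delta-r}(q-1)^{r}$ by Proposition~\ref{pr:lwlowg}; since $q\geq5(r+1)^{2}D$ forces $q(1-1/q)^{r}>(r+1)rD$ (already $q\geq5(r+1)^{2}$ gives $(1-1/q)^{r}>9/10$), there is an $\mathbb{F}_q$-point of $\mathrm{SL}_{r+1}$ off $W$, and its image in $G(\mathbb{F}_q)$ avoids $V$.

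For $G\in\{\mathrm{SO}_{2r}^{+},\mathrm{SO}_{2r+1},\mathrm{Sp}_{2r}\}$ with $\operatorname{char}(K)\neq2$ the model is the Lie algebra, tied to $G$ by the Cayley transform $\psi(S)=(I-S)(I+S)^{-1}=(I-S)\,\mathrm{adj}(I+S)/\det(I+S)$. Writing $M$ for the form defining $G$, the defining relation $S^{\top}M+MS=0$ of $\mathfrak g\subseteq\mathrm{Mat}_N$ gives both $(I-S)^{\top}M(I-S)=(I+S)^{\top}M(I+S)$ and $\det(I-S)=\det(I+S)$, so $\psi(S)\in G$; and since $\psi$ is a birational involution, injective on its domain, with $\dim\mathfrak g=\dim G$, its image is dense in the irreducible $G$. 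Now $\mathfrak g$ is a linear subspace of $\mathrm{Mat}_N$ of dimension $\delta$, defined over the prime field, so $\mathfrak g(\mathbb{F}_q)\cong\mathbb{F}_q^{\delta}$; in linear coordinates on $\mathfrak g$ the numerator of $\psi$ has degree $\leq N$ and $h(S):=\det(I+S)$ has degree $\leq N$, so $F^{\ast}(S):=F(\psi(S))h(S)^{D}$ has degree $\leq ND$ and is not identically zero on $\mathfrak g$. By Proposition~\ref{pr:lwnotq} the $S\in\mathbb{F}_q^{\delta}$ with $h(S)=0$ or $F^{\ast}(S)=0$ number at most $(N+ND)q^{\delta-1}=N(D+1)q^{\delta-1}$; since $N\leq2r+1$, the hypothesis $q\geq5(r+1)^{2}D$ gives $q>N(D+1)$, so some $S$ survives, and $g=\psi(S)\in G(\mathbb{F}_q)$ satisfies $F(g)=F^{\ast}(S)/h(S)^{D}\neq0$.

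The one genuinely delicate case is $G=\mathrm{Sp}_{2r}$ in characteristic $2$, where $I-S=I+S$ and the Cayley transform collapses. Here I would take as model the big cell of the Siegel parabolic: in the basis of Definition~\ref{de:untwisted-classical} the dense open set $U^{-}\cdot\mathrm{GL}_r\cdot U^{+}\cong\mathrm{Sym}_r\times\mathrm{GL}_r\times\mathrm{Sym}_r$ ($\mathrm{Sym}_r$ the symmetric $r\times r$ matrices) has dimension $2\binom{r+1}{2}+r^{2}=2r^{2}+r=\delta$, the two symmetric blocks embed linearly, and the Levi block $\left(\begin{smallmatrix}A&0\\0&(A^{\top})^{-1}\end{smallmatrix}\right)$ is parametrized through the Bruhat big cell $A=u^{-}tu^{+}$ of $\mathrm{GL}_r$ by a map of degree $O(r)$ with denominator $\det A=t_1\cdots t_r$; pulling $F$ back and multiplying by a suitable power of $t_1\cdots t_r$ gives once more a polynomial of degree $O(rD)$ on a copy of $\mathbb{A}^{\delta}$ with $r$ coordinate hyperplanes deleted, and the same count closes the case under $q\geq5(r+1)^{2}D$. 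The work, and the only place where anything can go wrong, lies exactly in making these models explicit enough to read off degree bounds depending on $r$ alone — i.e. never letting $\deg(G)$ re-enter — and in absorbing the characteristic-$2$ symplectic group; once $\deg(F^{\ast})$ is pinned down, the point count itself is entirely routine.
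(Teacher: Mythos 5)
Your argument is correct, and for the cases the paper's proof actually spells out it follows essentially the same route. For $\mathrm{SL}_{r+1}$ you substitute $x_{2}=\mathrm{adj}(x_{1})^{\top}$ to land in the standard embedding, get $\tilde F$ of degree $\leq rD$, and conclude by B\'ezout together with Propositions~\ref{pr:lwnotq} and~\ref{pr:lwlowg} --- exactly the paper's computation. For the remaining groups in odd characteristic you use the Cayley map from $\mathfrak g$ to $G$; the only difference from the paper is bookkeeping: the paper turns $\lambda$ into a morphism by working on the graph $X=\{(x,y):x\in\mathfrak g,\ \det(\mathrm{Id}_{N}+x)y=1\}\subseteq\mathrm{Mat}_{N}\times\mathbb{A}^{1}$ and pulls $F$ back with degree $\leq (N+1)D$, while you clear denominators by multiplying by $\det(\mathrm{Id}_{N}+S)^{D}$ and get degree $\leq ND$; both counts close comfortably under $q\geq 5(r+1)^{2}D$. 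Where you genuinely diverge is $\mathrm{Sp}_{2r}$ in characteristic $2$, which Definition~\ref{de:untwisted-classical} does allow: you rightly observe that $(\mathrm{Id}-S)(\mathrm{Id}+S)^{-1}$ degenerates there, a point the paper's written proof passes over silently (it invokes the Cayley--Weil construction uniformly for all non-$\mathrm{SL}$ cases), and you replace the Lie-algebra model by the big cell $\mathrm{Sym}_{r}\times\mathrm{GL}_{r}\times\mathrm{Sym}_{r}$ of the Siegel parabolic. That sketch does work, and can even be simplified: the cell is dense in the irreducible group, the entries of $\left(\begin{smallmatrix}\mathrm{Id}&0\\ C&\mathrm{Id}\end{smallmatrix}\right)\left(\begin{smallmatrix}A&0\\ 0&(A^{\top})^{-1}\end{smallmatrix}\right)\left(\begin{smallmatrix}\mathrm{Id}&B\\ 0&\mathrm{Id}\end{smallmatrix}\right)$ have degree $\leq r+3$ after clearing the single denominator $\det A$ (so the inner Bruhat parametrization of $\mathrm{GL}_{r}$ is unnecessary), the pullback of $F$ then has degree $\leq (r+3)D$, and discarding the loci $\det A=0$ and $F^{\ast}=0$ removes at most $(r+(r+3)D)q^{\delta-1}$ of the $q^{\delta}$ points of the parameter space, which is harmless for $q\geq 5(r+1)^{2}D$. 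So nothing is missing in substance; the only thing to do is to write out that last case in full rather than leave it as a plan.
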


The proof is the same as in \cite{BDH21}. ``Untwisted classical group'' is as in Definition~\ref{de:untwisted-classical}.

\begin{proof}
Let $\dim(G)=\delta$. For the lower bound we can use $|G(\mathbb{F}_{q})|\geq q^{\delta}-rq^{\delta-1}$, by Proposition~\ref{pr:lwlowg}. For the upper bound on $|G(\mathbb{F}_{q})\cap V(\overline{\mathbb{F}_{q}})|$ we follow two routes, according to whether $G=\mathrm{SL}_{r+1}$ or $G\neq\mathrm{SL}_{r+1}$.

First, let $G=\mathrm{SL}_{r+1}$. Recall that the embedding used is the product of the usual and the contragredient one. We want to reduce the problem to the usual definition of $\mathrm{SL}_{r+1}$ as the variety $\{g\in \mathrm{Mat}_{r+1}: \det(g)=1\}$, taking advantage of the fact that $\deg(G)$ is much smaller in that case. If we start with the embedding of $\SL_{r+1}$ in 
$\mathrm{Mat}_{2 r+2}$, we project it down to the usual embedding in $\mathrm{Mat}_{r+1}$ (the one in the upper-left corner).
As a result, our variety $V$ becomes a variety $\tilde{V}=\{\tilde{F}=0\}$ with $\deg(\tilde{F})\leq rD$: in fact, $\tilde{F}$ can be obtained by replacing every coordinate of $F$ corresponding to an entry of the inverse matrix with the appropriate minor, whose degree is $r$ in the entries of the matrix. 

For the usual embedding we have $\deg(\mathrm{SL}_{r+1})\leq r+1$, and so $\deg(\SL_{r+1}\cap V)\leq r D \cdot (r+1)$. Then, using Proposition~\ref{pr:lwnotq}, we see that
\begin{equation*}
|G(\mathbb{F}_{q})\setminus(G(\mathbb{F}_q)\cap V(\overline{\mathbb{F}_{q}}))|\geq q^{\delta}\left(1-\frac{r}{q}-\frac{r D\cdot (r+1)}{q}\right).
\end{equation*}
For our $q$, this bound is positive, since $r + r (r+1) D < (r+1)^2 D$.

Now let $G\neq\mathrm{SL}_{r+1}$. The improvement here will come from working in the Lie algebra $\mathfrak{g}$, which has degree $1$. For each of the algebraic groups we are considering, 
there is an invertible birational map between dense open sets $\lambda:\mathfrak{g}\setminus(\mathfrak{g}\cap Z)\rightarrow G\setminus(G\cap Z)$ given by $\lambda(x)=(\mathrm{Id}_{N}-x)(\mathrm{Id}_{N}+x)^{-1}$, where $Z=\{x\in\mathrm{Mat}_{N}:\det(\mathrm{Id}_{N}+x)=0\}$. This construction is due to Cayley
for $G = \mathrm{SO}_n$ (with the common definition $\SO_n= \{x\in \mathrm{Mat}_n: 
\det(x)=1, x^\top x = \mathrm{Id}_{n}\}$); Weil later noted it generalizes to groups
with an involution, including $\Sp_{2r}$
(see, e.g., \cite[Ex.~1.16]{LPR06}). It is easy to verify that Cayley's construction
applies verbatim to $G=\mathrm{SO}_{2r}^{+}$ and $G=\mathrm{SO}_{2r+1}$ defined as in \S\ref{se:chev}; these cases are also covered by Weil's generalization. We will work in $\mathrm{Mat}_{N}\times\mathbb{A}^{1}$, so as to make $\lambda$ into a polynomial map, and thus an isomorphism. 

Let $G\leq\mathrm{GL}_{N}\leq\mathrm{Mat}_{N}$, and let
\begin{equation*}
X=\{(x,y)\in\mathrm{Mat}_{N}\times\mathbb{A}^{1}\,:\,x\in\mathfrak{g},\,\det(\mathrm{Id}_{N}+x)y=1\}.
\end{equation*}
Then $X$ is isomorphic to $\mathfrak{g}\setminus(\mathfrak{g}\cap Z)$ via the projection $\pi(x,y)=x$, and to $G\setminus(G\cap Z)$ via
\begin{align*}
\tilde{\lambda} & :\mathrm{Mat}_{N}\times\mathbb{A}^{1}\rightarrow\mathrm{Mat}_{N}, & \tilde{\lambda}(x,y) & =y(\mathrm{Id}_{N}-x)\mathrm{adj}(\mathrm{Id}_{N}+x).
\end{align*}
The variety $\tilde{V}=\tilde{\lambda}^{-1}(V)$ is defined by the single polynomial $\tilde{F}(x,y)=F(\tilde{\lambda}(x,y))$, which has degree $\leq \deg(\lambda)\cdot \deg(V) = (N+1) D$; moreover, if $X(\mathbb{F}_{q})\setminus(X(\mathbb{F}_{q})\cap\tilde{V}(\overline{\mathbb{F}_{q}}))$ is nonempty then $G(\mathbb{F}_{q})\setminus(G(\mathbb{F}_{q})\cap V(\overline{\mathbb{F}_{q}}))$ is also nonempty. The sets $X(\mathbb{F}_{q})$ and $\mathfrak{g}(\mathbb{F}_{q})\setminus(\mathfrak{g}\cap Z)(\mathbb{F}_{q})$ are in bijection with each other via $\pi$, so
\begin{equation*}
|X(\mathbb{F}_{q})|=|\mathfrak{g}(\mathbb{F}_{q})|-|(\mathfrak{g}\cap Z)(\mathbb{F}_{q})|\geq q^{\delta}-q^{\delta-1}N,
\end{equation*}
by $\dim(\mathfrak{g})=\delta$, B\'ezout, and Proposition~\ref{pr:lwnotq}. By $\deg(X)\leq N+1$ and Proposition~\ref{pr:lwnotq},
\begin{equation}\label{eq:wetleg}
|X(\mathbb{F}_{q})\setminus(X(\mathbb{F}_{q})\cap\tilde{V}(\overline{\mathbb{F}_{q}}))|\geq q^{\delta}\left(1-\frac{N}{q}-\frac{(N+1)^{2}D}{q}\right).
\end{equation}
Since $N\leq 2 r + 1$, we see that
$N + (N+1)^2 D \leq 
(2 r + 1) + 4 (r+1)^2 D < 5 (r+1)^2 D$, and so the lower bound in 
\eqref{eq:wetleg} is positive for $q\geq 5 (r+1)^2 D$.
\end{proof}

For technical reasons we shall need a stronger conclusion as well.

\begin{corollary}\label{co:lw4}
Let $G\leq\mathrm{GL}_{n}$ be a connected reductive linear algebraic group of rank $r$ over $\mathbb{F}_{q}$. Let $V\subsetneq G$ be a proper subvariety of degree $D$ defined over $\overline{\mathbb{F}_{q}}$. If $q>r+4D$, then $|G(\mathbb{F}_q)\cap V(\overline{\mathbb{F}_{q}})|<|G(\mathbb{F}_{q})|/4$.
\end{corollary}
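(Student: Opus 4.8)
The plan is to treat this as a direct sharpening of Corollary~\ref{co:langweil}, exploiting the fact that here $V$ is \emph{already} a subvariety of $G$. In Corollary~\ref{co:langweil} one first forms $V\cap G$ and bounds its degree by $\Delta D$ via B\'ezout, which is wasteful when $\Delta=\deg(G)$ is large; but if $V\subseteq G$ to begin with, there is nothing to intersect, and one may apply the point-counting bound of Proposition~\ref{pr:lwnotq} directly to $V$ with degree $D$, comparing against the lower bound for $|G(\mathbb{F}_q)|$ from Proposition~\ref{pr:lwlowg}.

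Concretely, I would first note that $\dim(V)\le\delta-1$: since $G$ is connected and reductive it is connected, hence irreducible, and a proper subvariety of an irreducible variety has strictly smaller dimension. Next, writing $\mathbb{A}^m$ for the ambient affine space containing $G$, every point of $G(\mathbb{F}_q)$ has all coordinates in $\mathbb{F}_q$, so
\[
|G(\mathbb{F}_q)\cap V(\overline{\mathbb{F}_q})|\le |V(\overline{\mathbb{F}_q})\cap\mathbb{A}^m(\mathbb{F}_q)|\le D\,q^{\delta-1}
\]
by Proposition~\ref{pr:lwnotq}. On the other hand, Proposition~\ref{pr:lwlowg} gives $|G(\mathbb{F}_q)|\ge q^{\delta-r}(q-1)^r=q^{\delta}(1-1/q)^r$. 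It therefore suffices to check that $4 D q^{\delta-1}<q^{\delta}(1-1/q)^r$, i.e.\ that $4D<q(1-1/q)^r$. By Bernoulli's inequality $(1-1/q)^r\ge 1-r/q$, so $q(1-1/q)^r\ge q-r$; and $q>r+4D$ gives $q-r>4D$, which finishes the proof.

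There is no genuine obstacle here: the argument is just a clean combination of the two point-counting propositions already established. The only points requiring a moment's care are the standard reduction to $\dim(V)\le\delta-1$ using irreducibility of $G$, and the elementary estimate $q(1-1/q)^r\ge q-r$. One could equally well run the same argument with the constant $4$ replaced by any $c\ge 1$, obtaining $|G(\mathbb{F}_q)\cap V(\overline{\mathbb{F}_q})|<|G(\mathbb{F}_q)|/c$ as soon as $q>r+cD$; the value $c=4$ is simply the one needed in the sequel.
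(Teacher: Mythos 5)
Your proof is correct and is essentially the paper's own argument: the paper simply says the proof is analogous to Corollary~\ref{co:langweil} with the B\'ezout step omitted because $V$ is already inside $G$, which is exactly the combination of Proposition~\ref{pr:lwnotq} (giving $|G(\mathbb{F}_q)\cap V(\overline{\mathbb{F}_q})|\le Dq^{\delta-1}$) and Proposition~\ref{pr:lwlowg} (giving $|G(\mathbb{F}_q)|\ge q^{\delta-1}(q-r)$) that you spell out. Your explicit Bernoulli estimate and the remark on general constants $c$ are fine additions but do not change the route.
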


\begin{proof}
The proof is analogous to the one for Corollary~\ref{co:langweil}, although here $V$ is already contained in $G$.
\end{proof}

\subsection{Escape from subvarieties}\label{se:escape}

An {\em escape from subvarieties} routine allows us to find generic elements of $G(K)$ in an appropriately bounded number of steps: in other words, if $V$ is the proper subvariety expressing ``non-genericity'', for any generating set $A$ there is an element of $A^{k}$ outside $V$, where $k$ depends only on the data of $G$ and $V$.

The idea first appears in the literature in~\cite{EMO05}, although it is likely older than that. In the context of diameter bounds, \cite[Prop.~4.1]{Hel11}, \cite[Lem.~3.11]{BGT11}, and~\cite[Lem.~48]{PS16} are results of this kind.

We will use the following version, which offers a much improved quantitative estimate.

\begin{proposition}\label{pr:escape}\cite[Cor.~3.3]{BDH21}
Let $G\leq\mathrm{GL}_{n}$ be a linear algebraic group defined over a field $K$, and let $A\subseteq G(K)$ be a finite set with $e\in A$.

For any variety $V\subseteq\mathrm{Mat}_{n}$ with $d=\dim(V)$ and $D=\deg(V)$,
and for any element $x\in\mathrm{Mat}_{n}(K)$ with
$\langle A\rangle x\not\subseteq V(K)$, there is an element $g\in A^{k}$ such that $g x\not\in V(K)$ with $k=d+1$ (if $D=1$) or $k = 2 D^{d+1}$ (if $D>1$).
\end{proposition}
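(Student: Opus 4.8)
The plan is to present $V$ as the top term of a descending chain of subvarieties whose length is controlled by an affine Hilbert function. For $i\geq 0$ put $V_i=\{y\in\mathrm{Mat}_{n}:gy\in V\text{ for all }g\in A^i\}=\bigcap_{g\in A^i}g^{-1}V$. Since $e\in A$ we have $A^{i-1}\subseteq A^i$, so $V=V_0\supseteq V_1\supseteq V_2\supseteq\cdots$ and $V_i=\bigcap_{a\in A}a^{-1}V_{i-1}$. By Noetherianity of the Zariski topology the chain stabilizes, say $V_m=V_{m+1}=\cdots$; then $aV_m\subseteq V_m$ for all $a\in A$, so $\langle A\rangle V_m\subseteq V_m\subseteq V$, and the hypothesis $\langle A\rangle x\not\subseteq V$ forces $x\notin V_m$. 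Let $T=\min\{i:x\notin V_i\}\leq m$ (if $x\notin V$ already, then $T=0$ and $g=e$ works). From $x\in V_{T-1}\setminus\bigcap_{a\in A}a^{-1}V_{T-1}$ I obtain $a_1\in A$ with $a_1x\notin V_{T-1}$; iterating, I obtain $a_2,\dots,a_T\in A$ with $a_j\cdots a_1x\notin V_{T-j}$, so $g:=a_T\cdots a_1\in A^T\subseteq A^k$ satisfies $gx\notin V$ (using $e\in A$ to pad). So it suffices to bound $m$ by $k$.

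The key observation is that the whole chain stays ``cut out in degree $\leq D$''. Writing $I_{\leq D}$ for the subspace of polynomials of degree $\leq D$ in an ideal $I$: by Lemma~\ref{le:inhypers} (applied to separate $V$ from each point not lying on it) together with Noetherianity, $V$ is the common zero locus of finitely many polynomials $p_1,\dots,p_r$ of degree $\leq D$. If $V_{i-1}$ is the common zero locus of polynomials of degree $\leq D$, say $p_1,\dots,p_r$, then, left multiplication by a fixed matrix $a$ being linear, $a^{-1}V_{i-1}$ is the common zero locus of the $p_j(a\,\cdot\,)$, which again have degree $\leq D$, and so $V_i=\bigcap_{a\in A}a^{-1}V_{i-1}$ is the common zero locus of the polynomials $p_j(a\,\cdot\,)$ ($a\in A$, $1\leq j\leq r$), of degree $\leq D$ again. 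Now suppose $V_i\subsetneq V_{i-1}$: not all the $p_j(a\,\cdot\,)$ can vanish on $V_{i-1}$ (else $V_{i-1}\subseteq\bigcap_{a}a^{-1}V_{i-1}=V_i$), so some $p_j(a\,\cdot\,)$ lies in $I(V_i)_{\leq D}$ but not in $I(V_{i-1})$; since $I(V_{i-1})_{\leq D}\subseteq I(V_i)_{\leq D}$, the inclusion is strict. Hence, with $h_i:=\binom{n^{2}+D}{D}-\dim I(V_i)_{\leq D}=\dim\big(\text{coordinate ring of }V_i\big)_{\leq D}$, the nonnegative integer $h_i$ strictly decreases at each strict step of the chain, so $m\leq h_0$.

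It remains to bound $h_0=\dim R_{\leq D}$, with $R$ the coordinate ring of $V$ — that is, the affine Hilbert function of $V$ evaluated at $D$. I would invoke the standard bound $\dim R_{\leq m}\leq\binom{m+d}{d}\deg(V)$, proved by taking a generic linear form $\ell$ (a non-zero-divisor on the reduced ring $R$ as long as $V\neq\mathrm{Mat}_{n}$) and inducting on $d$ through the coordinate ring $R/\ell R$ of a generic hyperplane section, of dimension $\leq d-1$ and degree $\leq\deg V$, summing $\binom{j+d-1}{d-1}$ over $0\leq j\leq m$ via the hockey-stick identity. This gives $m\leq\binom{D+d}{d}D$. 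When $D=1$, $V$ is a $d$-dimensional affine subspace and $\dim R_{\leq 1}=d+1$, so $k=d+1$; when $D\geq 2$, the elementary inequality $\binom{D+d}{d}\leq 2D^{d}$ (the factor $i=1$ in $\binom{D+d}{d}/D^{d}=\prod_{i=1}^{d}(\tfrac1i+\tfrac1D)$ is $\leq\tfrac32$, the rest are $\leq1$) gives $k=2D^{d+1}$. Throughout, the varieties may be taken over $\overline{K}$, and since each $gx$ is a $K$-point, ``$gx\in V(K)$'' and ``$gx\in V(\overline{K})$'' coincide.

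The step I expect to be the genuine obstacle — and where the arguments of \cite{BGT11} and \cite{PS16} lost control, producing exponential-tower constants — is keeping the chain of bounded complexity. One must resist measuring $\deg(V_i)$, which Bézout bounds only by $D^{|A^i|}$, and instead track the degree-$\leq D$ piece of $I(V_i)$, which provably does not grow; the affine Hilbert function then both guarantees termination and delivers the clean exponent $d+1$ (resp.\ factor $D^{d+1}$). The plan above is, in essence, the argument behind \cite[Cor.~3.3]{BDH21}.
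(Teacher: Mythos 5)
The paper never proves this proposition: it is imported verbatim from \cite[Cor.~3.3]{BDH21}, with only the remark that the hypothesis there ($A=A^{-1}$, group generation) can be relaxed to semigroup generation, so there is no in-paper proof to compare against. Your reconstruction is sound and recovers exactly the stated constants: the chain $V_i=\bigcap_{g\in A^i}g^{-1}V$ satisfies $V_i=\bigcap_{a\in A}a^{-1}V_{i-1}$, stabilization would trap $\langle A\rangle x$ inside $V$, each $V_i$ remains cut out set-theoretically by polynomials of degree $\leq D$ (via Lemma~\ref{le:inhypers} at the first step and linearity of $y\mapsto ay$ afterwards), so $I(V_i)_{\leq D}$ grows strictly at every strict inclusion and the chain has length at most the affine Hilbert function of $V$ at $D$, which is $d+1$ when $D=1$ and at most $D\binom{D+d}{d}\leq 2D^{d+1}$ when $D\geq 2$; the walk down the chain producing $g=a_T\cdots a_1\in A^T\subseteq A^k$ and your handling of semigroup generation and of $K$- versus $\overline{K}$-points are all correct. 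The one step to treat with more care is the bound $\dim R_{\leq s}\leq\deg(V)\binom{s+d}{d}$ itself: in your sketch the induction passes through $R/\ell R$, which need not be reduced, and the natural surjection from $R/\ell R$ onto the coordinate ring of the set-theoretic section $V\cap\{\ell=0\}$ gives an inequality in the wrong direction, so the inductive hypothesis (stated for reduced coordinate rings of varieties) does not apply to $R/\ell R$ directly; either run the induction at the level of graded modules and multiplicities, or simply cite this estimate, which is standard in the effective algebraic geometry literature (Chardin-type bounds on Hilbert functions) and is the quantitative heart of the $2D^{d+1}$ exponent. With that bound granted, your argument is complete and, as far as one can tell from the citation, follows the same route as the source the paper relies on.
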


A proof in the style of~\cite[Prop.~3.2]{EMO05} or~\cite[Prop.~4.1]{Hel11} leads to a value of $k$ in the statement above of order comparable to $D^{2^{d}}$, rather than to $D^{d}$. Therefore, if we were to use this $k$ later in the proof of the main theorem -- in particular: inside~\eqref{eq:exitfibre} -- the final constant $C_{2}$ would become doubly exponential in $r$.

In the statement of \cite[Cor.~3.3]{BDH21}, $\langle A\rangle$ is the {\em group} generated by $A$ and we have the extra condition $A = A^{-1}$. However, it is easy to remove the condition and have $\langle A\rangle$ denote the {\em semigroup} generated by $A$ (as in the statement of Theorem~\ref{th:main}). Of course, if $K$ is finite, then $G(K)$ is finite, and the two meanings of $\langle A\rangle$ coincide.


\section{Exceptional loci}\label{se:sing}

In order to control the number of elements of a set $A$ on a variety during the main proof in \S\ref{se:dimest}, we will proceed by induction on the dimension of the variety. We use maps to reduce the original problem to that on varieties of smaller dimensions. However, this forces us to keep in check the fibres that are larger than the generic ones, because they could lead to over-counting.

Recall that by Theorem~\ref{th:chev2}\eqref{th:chev23} the points sitting in a larger-than-expected fibre form a proper subvariety. We call the subvariety in~\eqref{eq:exlocus} the {\em exceptional locus} of $X$ through the map $f$, in analogy with the exceptional locus of the connected half of the Stein factorization: in fact, when $\dim(X)=\dim(\overline{f(X)})$, the two notions coincide.
(See, e.g., \cite[Thm.~8.3.8]{Harder}
for more on the Stein factorization, though we shall not be using it.)
The components of the fibres that may cause problems in the counting are all contained in this subvariety.
Our end goal for this section is to find a subvariety of $X$ that is proper, contains the exceptional locus, and for whose degree we can give good effective bounds.

\begin{lemma}\label{le:subclaim}
Let $X\subseteq\mathbb{A}^{s}$ be a variety with $\dim(X)\geq d$, and let $\{f_{i}\}_{i\leq t}$ be a finite set of polynomials of degree $\leq D$ defining $X$. Let $\mathcal{G}=G(s-d+1,s+1)$ be the Grassmannian; let $U\subseteq\mathcal{G}$ be an open subset 
parametrizing $(s-d)$-dimensional affine subspaces
of $\mathbb{A}^s$.

Then the set $W=\{L\in U:L\cap X=\emptyset\}$ is contained in a proper subvariety of $\mathcal{G}$ defined by equations whose coefficients are polynomials on the coefficients of the polynomials $f_{i}$ (for $d$, $D$, $s$, $t$ fixed).
\end{lemma}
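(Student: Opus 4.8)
The plan is to phrase the condition ``$L \cap X = \emptyset$'' as the non-solvability of a polynomial system, and then invoke a suitable elimination/resultant-type statement to conclude that this non-solvability is governed by the non-vanishing of certain polynomials in the coefficients of the $f_i$ and the coefficients of (a point of the Grassmannian parametrizing) $L$. Concretely, first I would work on the affine chart: a generic $(s-d)$-dimensional affine subspace $L$ can be written in parametric form $L = \{ v_0 + \sum_{j=1}^{s-d} t_j v_j : t_j \in \overline{K}\}$, where the $v_j \in \mathbb{A}^s$ depend (polynomially, on a suitable open chart of the Grassmannian) on the coordinates of the point of $\mathcal{G}$. Then $L \cap X \neq \emptyset$ is equivalent to the existence of $(t_1,\dots,t_{s-d}) \in \overline{K}^{s-d}$ such that $f_i\bigl(v_0 + \sum_j t_j v_j\bigr) = 0$ for all $i \leq t$. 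This is a system of $t$ polynomial equations of degree $\leq D$ in $s-d$ variables $t_j$, whose coefficients are polynomials in the entries of the $v_j$ and the coefficients of the $f_i$, with degrees and number of terms bounded in terms of $d, D, s, t$ only.

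The next step is to homogenize: introduce a homogenizing variable $t_0$, so the system becomes a system of homogeneous polynomials of degree $\leq D$ in $s-d+1$ variables $(t_0 : \dots : t_{s-d})$. By the classical theory of the \emph{multivariate resultant} (or, equivalently, the projective Nullstellensatz in its effective form / elimination theory as in \cite[\S I.5--I.6]{Mumford}), there is a finite collection of polynomials $R_\alpha$ in the coefficients of such a homogeneous system -- the resultant system -- with the property that the system has no common projective zero over $\overline{K}$ if and only if \emph{not all} $R_\alpha$ vanish; equivalently, the system \emph{has} a common projective zero precisely when every $R_\alpha$ vanishes. The coefficients of the original inhomogeneous system, hence the arguments fed into the $R_\alpha$, are polynomials in the coefficients of the $f_i$ and in the chart coordinates of $L$, with degrees bounded in terms of $d, D, s, t$. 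Pulling back, each $R_\alpha$ becomes a polynomial $S_\alpha$ on (the affine chart of) $\mathcal{G}$ whose coefficients are polynomials in the coefficients of the $f_i$. The condition ``$L$ has a point in common with the affine cone / the projective closure relevant to $X$'' is then the vanishing of all $S_\alpha$.

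One must then reconcile ``common affine zero of the $f_i$ on $L$'' with ``common projective zero'' -- the extra locus at infinity ($t_0 = 0$) has to be discarded. Here I would use that on the open set $U$ we are parametrizing genuine $(s-d)$-dimensional affine subspaces, and one can restrict attention to $L$ meeting $X$ in its affine part by the usual trick: either argue that the zeros at infinity contribute a closed condition that can be absorbed, or simply note that $W$ (the set of $L \in U$ with $L\cap X = \emptyset$) is contained in the \emph{complement} of the set where all $S_\alpha$ vanish intersected with $U$ -- i.e. $W \subseteq U \setminus \{\text{all } S_\alpha = 0\}$ -- and the point of the lemma is only that $W$ is contained in a proper subvariety of $\mathcal{G}$, not that it equals one. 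Since $\dim(X) \geq d$, a generic $(s-d)$-dimensional affine subspace \emph{does} meet $X$ (by \cite[\S II.3.1.2]{DS98}, the dimension-count underlying the definition of degree), so the set $\{ \text{all } S_\alpha = 0 \}$ is not all of $\mathcal{G}$; hence $\mathcal{G} \setminus \{\text{all } S_\alpha = 0\}$ is contained in a proper subvariety $Z$ of $\mathcal{G}$ cut out by (a subset of) the $S_\alpha$, and $W \subseteq Z$. Finally, to get a statement on $\mathcal{G}$ globally (not just on one affine chart) one covers $\mathcal{G}$ by finitely many standard charts and takes the union of the resulting proper subvarieties; the coefficients of their defining equations are still polynomial in the coefficients of the $f_i$, with everything bounded in terms of $d, D, s, t$.

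\textbf{Main obstacle.} The delicate point is the bookkeeping at infinity and, relatedly, making sure that the resultant-type certificate genuinely detects ``no affine common zero on $L$'' rather than ``no projective common zero'': a naive homogenization can create spurious solutions on $t_0 = 0$, which would make the vanishing locus of the $S_\alpha$ too small and could in principle fail to contain $W$. I would handle this by either (i) adding the hyperplane $t_0 = 0$ to the system as an extra component to be avoided -- i.e. working with the resultant of the saturation, or passing to $\mathbb{P}^{s-d} \setminus \{t_0 = 0\}$ via a standard Rabinowitsch-type trick with an auxiliary variable -- or (ii) observing that, for our purposes, it suffices to exhibit \emph{some} proper subvariety of $\mathcal{G}$ containing $W$, and that the locus ``$L$ meets the projective closure $\overline{X} \subseteq \mathbb{P}^s$'' already does the job, since $L\cap X = \emptyset$ certainly implies $L$ (extended projectively) can only meet $\overline{X}$ at infinity, a condition which is itself closed and proper on $U$ once one checks it is not identically satisfied -- and it is not, again because a generic $(s-d)$-plane meets $X$ in the finite part. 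Route (ii) is cleaner and is what I expect the authors to use. The only other thing to be careful about is that the bounds on degrees and number of defining equations of the resultant system are the classical ones, depending only on $D$, the number of variables $s-d$, and the number of equations $t$ -- all of which are fixed -- so no uniformity issue arises.
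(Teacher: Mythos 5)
There is a genuine gap, and it sits exactly at the point your ``Main obstacle'' paragraph tries to wave away. Your resultant system detects the existence of a \emph{projective} common zero: all $S_\alpha$ vanish at $L$ if and only if the homogenized system has a zero in $\mathbb{P}^{s-d}$. In particular, if $L\cap X\neq\emptyset$ then all $S_\alpha$ vanish at $L$; since $\dim(X)\geq d$ forces a \emph{generic} $L$ to meet $X$, the $S_\alpha$ vanish on a dense subset of the irreducible Grassmannian chart, hence identically. So $\{\text{all } S_\alpha=0\}$ is everything, your claim that it is not all of $\mathcal{G}$ is backwards, and the asserted inclusion $W\subseteq U\setminus\{\text{all }S_\alpha=0\}$ would force $W=\emptyset$ (also, the complement of a closed set cannot be ``contained in a proper subvariety cut out by a subset of the $S_\alpha$''). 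Spurious zeros at infinity make the vanishing locus too \emph{big}, not too small, and the resultant of the naively homogenized system carries no information here. Your preferred repair, route (ii), fails in the generality of the statement: only $\dim(X)\geq d$ is assumed, and the application in Proposition~\ref{pr:singproj} applies the lemma to fibres whose dimension may strictly exceed $u+1$. If $\dim(X)=e>d$, then $\overline{X}\cap H_{\infty}$ has dimension $e-1\geq d$ while $\overline{L}\cap H_{\infty}$ is an $(s-d-1)$-plane in $H_{\infty}\simeq\mathbb{P}^{s-1}$; since $(e-1)+(s-d-1)\geq s-1$, \emph{every} $L$ meets $\overline{X}$ at infinity, so your closed condition is identically satisfied and not proper. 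Even when $e=d$ you still owe the crucial uniformity: $\overline{X}$ is not cut out by the homogenizations of the given $f_i$ (these can acquire extra higher-dimensional components at infinity, as for the twisted cubic), and removing them requires saturation/elimination whose output is not visibly polynomial in the coefficients of the $f_i$. Keep in mind that without that uniformity the lemma is trivial -- $W$ lies in the complement of the open set of planes meeting $X$ -- so the polynomial dependence on the $f_{i}$'s coefficients is the entire content, and your argument never secures it.

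For comparison, the paper detects $L\cap X=\emptyset$ \emph{affinely}, via the effective Nullstellensatz: $L\cap X=\emptyset$ if and only if there exist $a_i,b_j$ of degree at most $s^{s+3}D^{s+1}$ (Sombra's bound, \cite{Som99}) with $\sum_i a_i f_i+\sum_j b_j l_j=1$, where the $l_j$ are the $d$ linear forms cutting out $L$. With the degree bound fixed, this becomes the solvability of a linear system $M\vec{y}=\vec{e}_1$ whose entries are \emph{linear} in the coefficients of the $f_i$ and of the $l_j$. The real work is then that solvability is a constructible, not closed, condition (a coincidence of ranks): the paper takes $h$ to be the generic (largest) rank parameter over $U$, uses irreducibility of $\mathcal{G}$ to find a single $h\times h$ minor of $(M\mid\vec{e}_1)$ that is nonzero at some $L\in U$ with $L\cap X\neq\emptyset$, and checks that this minor vanishes on all of $W$; its coefficients are polynomials in the coefficients of the $f_i$, as required. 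Your proposal never confronts this constructibility issue, and the homogenization/resultant route cannot be patched into a proof without essentially redoing the Nullstellensatz-plus-minors argument.
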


\begin{proof}
As we mentioned in \S\ref{sss:degree}, a variety of dimension $\geq d$ has nonempty intersection with a generic $(s-d)$-linear affine subspace $L$. Thus, there is an open set $U'\subseteq U$ with $L\cap X\neq\emptyset$ for all $L\in U'$.

Write $\vec{d}$ for an $s$-tuple of non-negative integers, and $|\vec{d}|$ for the sum of its entries. Let $x_{\vec{d}}$ be the monomial in the $s$ variables of $\mathbb{A}^{s}$ whose degrees are given by $\vec{d}$. A tuple $(l_{j,\vec{d}})_{j\leq d,|\vec{d}|\leq 1}\in\mathbb{A}^{d(s+1)}$ defines a unique affine subspace of $\mathbb{A}^{s}$ via the $d$ equations $l_{j}=0$ with $l_{j}=\sum_{|\vec{d}|\leq 1}l_{j,\vec{d}}x_{\vec{d}}$, and the tuples for which the subspace is of dimension $s-d$ form an open subset of $\mathbb{A}^{d(s+1)}$. By standard arguments then, finding a proper subvariety of $\mathcal{G}$ with the required properties reduces to finding an analogous proper subvariety of $\mathbb{A}^{d(s+1)}$.

Since $X$ is defined by $t$ equations $f_{i}=0$, similarly to above we write $(f_{i,\vec{d}})_{i\leq t,|\vec{d}|\leq D}$ for the tuple in $\mathbb{A}^{t\binom{s+D}{D}}$ defining the polynomials $f_{i}$.

By the Nullstellensatz, $L\cap X=\emptyset$ corresponds to the existence of polynomials $a_{i},b_{j}$ such that $\sum_{i}a_{i}f_{i}+\sum_{j}b_{j}l_{j}=1$. Since there exists a bound on $\deg(a_{i}),\deg(b_{j})$ depending only on $s$ and $D$ (say $s^{s+3}D^{s+1}$, see~\cite{Som99}), the previous identity corresponds to a system of linear equations $\sum_{i}\sum_{\vec{d}'}a_{i,\vec{d}'}f_{i,\vec{d}-\vec{d}'}+\sum_{j}\sum_{\vec{d}''}b_{j,\vec{d}''}l_{j,\vec{d}-\vec{d}''}=\delta_{\vec{d}}$ for a finite collection of degree vectors $\vec{d}$ independent from the values $f_{i,\vec{d}}$, where $\delta_{\vec{d}}=1$ if $\vec{d}=\vec{0}$ and $\delta_{\vec{d}}=0$ otherwise. Arranging the $a_{i,\vec{d}'},b_{j,\vec{d}''}$ in a column vector $\vec{y}$, the system becomes a matrix equation $M\vec{y}=\vec{e}_{1}$, where $\vec{e}_{1}=(1\ 0\ \ldots\ 0)^{T}$ and the entries of $M$ are linear in the $f_{i,\vec{d}'},l_{j,\vec{d}''}$. Thus, $L\cap X=\emptyset$ if and only if $M\vec{y}=\vec{e}_{1}$ has a solution, which is equivalent to asking for $\vec{e}_{1}$ to be in the span of the columns of $M$.

The matrix $M$ depends on $L$ via the coefficients $l_{j,\vec{d}}$. Consider from now on the entries of $M$ as polynomials in the variables $l_{j,\vec{d}}$. Let $V(h)$ be the variety of $L\in\mathcal{G}$ for which all the $h\times h$ minors of $M$ vanish: for every $h_{1}\leq h_{2}$, $V(h_{1})\subseteq V(h_{2})$. Given $L$, call $h_{L}$ the smallest integer such that $L\in V(h_{L})$; then, $L\cap X=\emptyset$ if and only if the $h_{L}\times h_{L}$ minors of $(M|\vec{e}_{1})$ vanish as well. If $h$ is the largest $h_{L}$ among all $L\in U$, the set of $L$ for which $h_{L}=h$ is the open subset $U''=U\setminus(U\cap V(h-1))$ of $U$. Since $\mathcal{G}$ is irreducible, $U'\cap U''$ is nonempty: thus, $m\neq 0$ for at least one $h\times h$ minor $m$ of $(M|\vec{e}_{1})$ and at least one $L\in U$.

Hence, the variety given by $m=0$ is proper in $\mathcal{G}$, and it contains $W$. Moreover, since the entries of $M$ are polynomials in the $f_{i,\vec{d}}$, so are the coefficients of $m$.
\end{proof}

As we said before, given $X$ and $f$ we want to find a subvariety of $X$ containing the exceptional locus of $X$ through $f$.

\begin{proposition}\label{pr:singproj}
Let $X\subseteq\mathbb{A}^{s}$ be an irreducible variety over a field $K$, and let $f:X\rightarrow\mathbb{A}^{t}$ be
a morphism. 
Let $u=\dim(X)-\dim(\overline{f(X)})$.
Then there is 
a proper subvariety $Z\subsetneq \overline{f(X)}$ of degree
$$\deg(Z) \leq \mdeg(f)^{\dim (\overline{f(X)}) - 1} \deg(X)$$
such that every $y\in f(X)$ with $\dim(f^{-1}(y)) > u$ lies on $Z$.
\end{proposition}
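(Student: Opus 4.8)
The plan is to reduce to the equidimensional case $\dim(X)=\dim(\overline{f(X)})$ (i.e. $u=0$), handle that case by a clean ``generic fibre'' argument built on Lemma~\ref{le:subclaim}, and then patch the general case back together. First I would reduce to $u=0$: replace $X$ by a generic intersection $X\cap L$, where $L$ is an affine subspace of codimension $u$ in $\mathbb{A}^s$. By Theorem~\ref{th:chev2}\eqref{th:chev21} a generic fibre of $f$ is pure of dimension $u$, so for generic $L$ the restriction $f|_{X\cap L}$ is generically finite onto $\overline{f(X)}$; moreover for $y$ with $\dim f^{-1}(y)>u$, the fibre $(f|_{X\cap L})^{-1}(y) = f^{-1}(y)\cap L$ still has positive dimension for generic $L$ (a subvariety of dimension $>u$ meets a generic codimension-$u$ subspace in dimension $>0$). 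Also $\deg(X\cap L)\le\deg(X)$ by B\'ezout and $\mdeg(f|_{X\cap L})\le\mdeg(f)$, and $\dim\overline{f(X\cap L)}=\dim\overline{f(X)}$, so proving the statement with $u=0$ for $(X\cap L, f|_{X\cap L})$ yields exactly the desired $Z$ for $(X,f)$ with no loss in the bound.

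So assume $u=0$, $\dim(X)=\dim(\overline{f(X)})=:d$. The key idea is that $y\in\overline{f(X)}$ has a positive-dimensional fibre precisely when the fibre $f^{-1}(y)$ fails to meet a fixed generic affine subspace of complementary dimension inside the graph. Concretely, let $\Gamma=\overline{g(X)}\subseteq\mathbb{A}^s\times\mathbb{A}^t$ be the graph of $f$, where $g(x)=(x,f(x))$; then $\Gamma$ is irreducible of dimension $d$, $\deg(\Gamma)\le\mdeg(f)^{d}\deg(X)$ by Lemma~\ref{le:zarimdeg} (the map $g$ has components of degree $1$ and $\le\mdeg(f)$, and $\dim\overline{g(X)}=d$), and the projection $\pi_2:\Gamma\to\overline{f(X)}$ is generically finite. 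Fix a generic $(s-d)$-dimensional affine subspace $L_0\subseteq\mathbb{A}^s$: by Lemma~\ref{le:subclaim} applied to $\Gamma$ (viewing $L_0$ as cutting the $\mathbb{A}^s$-factor, i.e. pulled back to $\mathbb{A}^s\times\mathbb{A}^t$), the set of $y\in\overline{f(X)}$ such that $f^{-1}(y)$ misses $L_0$ is contained in a proper subvariety $Z\subsetneq\overline{f(X)}$ cut out inside $\overline{f(X)}$ by a single equation; and if $\dim f^{-1}(y)>0=u$ then, $L_0$ being generic, $f^{-1}(y)\cap L_0\ne\emptyset$ would hold for a generic such $L_0$ — the point is instead to choose $L_0$ generic and note that only the ``bad'' $y$ (forming a proper subvariety) can have the fibre missing a fixed generic $L_0$, while positive-dimensional fibres force a certain minor to vanish identically in the graph coordinates. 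To get the degree bound I would instead argue more directly: take $L_0$ generic of codimension $d$ in $\mathbb{A}^s$, so $X\cap L_0$ is a finite set of $\deg(X)$ points; the image $f(X\cap L_0)$ is a finite set, and I claim $Z:=\overline{f(X)}\cap\{\,y: y\notin f(X\cap L_0)\ \text{after removing the right hyperplanes}\,\}$ works — more cleanly, $Z$ is obtained by picking a hypersurface through $f(X\setminus (X\cap L_0))$-complement, using Lemma~\ref{le:inhypers}/Lemma~\ref{le:invimdeg-silly}. The cleanest route: the locus of $y$ with $\dim f^{-1}(y)>0$ is contained in $\overline{f(X)}\setminus U_0$ where $U_0$ is the open set of Theorem~\ref{th:chev2}\eqref{th:chev21}; I bound $\deg(\overline{f(X)}\setminus U_0)$ by exhibiting it inside the image under $f$ of $X$ minus a generic codimension-$d$ slice — but that has the wrong dimension.

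I expect the main obstacle to be precisely this last point: getting the degree bound $\mdeg(f)^{d-1}\deg(X)$ rather than the naive $\mdeg(f)^{d}\deg(X)$. The saving of one power of $\mdeg(f)$ should come from the following refinement: to bound $\deg(Z)$ for $Z\subseteq\overline{f(X)}$ a hypersurface-in-$\overline{f(X)}$, I do not bound $\deg(\overline{f(X)})$ (which costs $\mdeg(f)^d$) and intersect; instead I observe $Z$ is the Zariski closure of $f(X')$ for a suitable $X'\subseteq X$ of dimension $d-1$ (namely $X'$ = the sub-locus of $X$ lying over small fibres, or a generic hyperplane section chosen to contain the fibre-exceptional points), and then Lemma~\ref{le:zarimdeg} gives $\deg(Z)=\deg(\overline{f(X')})\le\deg(X')\,\mdeg(f)^{\dim\overline{f(X')}}\le\deg(X)\,\mdeg(f)^{d-1}$ since $\dim\overline{f(X')}\le d-1$ and $\deg(X')\le\deg(X)$. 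So the crux is to produce such an $X'\subseteq X$, of dimension $\le d-1$ and degree $\le\deg(X)$, whose image closure contains all $y$ with $\dim f^{-1}(y)>u$: I would take $X'$ to be a generic hyperplane section of $X$ (after the reduction to $u=0$), noting that each positive-dimensional fibre $f^{-1}(y)$ must meet a generic hyperplane, so $y\in\overline{f(X\cap H)}$ for generic $H$; then $\deg(X\cap H)\le\deg(X)$ and $\dim\overline{f(X\cap H)}\le d-1$, and $Z:=\overline{f(X\cap H)}$ (which is proper in $\overline{f(X)}$ since it has strictly smaller dimension) does the job with exactly the stated degree bound. Verifying that a single generic $H$ simultaneously meets \emph{all} the (infinitely many, but fibred-over-a-proper-subvariety) positive-dimensional fibres is the delicate step, and is where Theorem~\ref{th:chev2}\eqref{th:chev23} and a dimension count on the incidence variety $\{(x,y): x\in f^{-1}(y),\ \dim f^{-1}(y)>u\}$ enter.
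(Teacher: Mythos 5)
Your final paragraph does converge on the paper's actual strategy: cut $X$ by a generic affine subspace of codimension $u+1$ (you do it in two stages, codimension $u$ and then a hyperplane, which is equivalent), set $Z=\overline{f(X\cap L)}$, note $\dim(Z)\leq\dim(X)-(u+1)=\dim(\overline{f(X)})-1$ so $Z$ is proper, and get the degree bound from Lemma~\ref{le:zarimdeg} plus B\'ezout with the exponent $\dim(\overline{f(X)})-1$. That part is right, and the middle of your write-up (the graph construction, the ``after removing the right hyperplanes'' candidate for $Z$) is a false start that you yourself abandon.

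The genuine gap is exactly the step you flag and then do not carry out: showing that a \emph{single} choice of $L$ works for \emph{all} $y$ with $\dim(f^{-1}(y))>u$ simultaneously. ``Each positive-dimensional fibre meets a generic hyperplane'' only gives, for each bad $y$, a dense open set of good subspaces depending on $y$; since there are infinitely many bad $y$, the intersection of these open sets need not be nonempty, so you cannot just pick ``a generic $H$''. This uniformity is the heart of the proposition, and the paper spends Lemma~\ref{le:subclaim} on it: the condition $L\cap f^{-1}(y)=\emptyset$ is placed inside a proper subvariety of the Grassmannian whose defining equations depend \emph{polynomially} on $y$ (via an effective Nullstellensatz and a minor-rank argument), so that one gets a proper subvariety $W\subseteq\mathcal{G}\times Y_i$ for each component $Y_i$ of the exceptional image locus $Y$ (the closure of the bad $y$, proper by Theorem~\ref{th:chev2}\eqref{th:chev23}); then Theorem~\ref{th:chev2}\eqref{th:chev21} applied to the projection $\mathcal{G}\times Y_i\to\mathcal{G}$ yields one $L$ meeting $f^{-1}(y)$ for all $y$ in a dense subset of each $Y_i$, whence $Y\subseteq\overline{f(X\cap L)}=Z$, and every bad $y$ lies on $Y$ by construction. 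Your alternative suggestion---an incidence-variety dimension count---could in principle be completed (one needs Chevalley's theorem to see that the locus of pairs $(y,L)$ with $y$ bad and $L\cap f^{-1}(y)=\emptyset$ is constructible, a fibre-dimension bound over the bad $y$, and then semicontinuity for the projection to the Grassmannian, together with the closure argument just described to catch the bad $y$ whose fibres miss the chosen $L$), but as written this is a sketch of a plan, not a proof, and it is precisely the step on which the whole proposition turns. Note also that after cutting, $X\cap L$ need not be irreducible, so the per-component bookkeeping (or an argument avoiding irreducibility) has to be made explicit.
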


The idea of the proof is simple:
for $y$ on $\mathbb{A}^t$ such that $\dim (f^{-1}(y))>u$,
a generic affine subspace $L$ of codimension $u+1$ will intersect
$f^{-1}(y)$. At the same time, $L$ will intersect $X$ only 
in a set of dimension $\dim(X)-(u+1)$. Hence, 
$\overline{f(L\cap X)}$ is a proper subvariety
of $\overline{f(X)}$ containing every $y$ such that $\dim (f^{-1}(\{y\}))
> u$. Of course, in the actual proof,  we have to be careful about what is meant by ``generic'', especially since one $L$ will have to make do for all $y$.

\begin{proof}
By Theorem~\ref{th:chev2}\eqref{th:chev23}, the Zariski closure of
$\{y\in\overline{f(X)}(\overline{K}):\dim(f^{-1}(y))\geq u+1\}$
is a proper subvariety
$Y\subsetneq \overline{f(X)}$. 
We assume that $u<s$, as otherwise there are no
$y$ with $\dim(f^{-1}(y))>u$.

Let $\mathcal{G} = 
G(s-u,s+1)$ and $U\subseteq \mathcal{G}$ be as in Lemma~\ref{le:subclaim}, that is, let $U$ parametrize affine subspaces of 
$\mathbb{A}^s$ of codimension $u+1$. Let $\{Y_i\}_{i\in I}$ be the irreducible components of $Y$.
We apply
Lemma~\ref{le:subclaim} to varieties of the
form $f^{-1}(y)$ (instead of $X$) for $y$ on $Y_i$
with $d=u+1$. 
For each such $y$, there is a proper subvariety $W_{y}$ of $\mathcal{G}\times\{y\}$ that contains the set $\{(L,y)\in (\mathcal{G}\times\{y\})(\overline{K}) : L\cap f^{-1}(y) = \emptyset\}$. Most importantly, the equations defining $W_{y}$ depend polynomially on $y$. Thus, we obtain that there is a proper subvariety $W$ of $\mathcal{G}\times Y_i$ itself, containing the set $\{(L,y)\in (\mathcal{G}\times Y_i)(\overline{K}) : L\cap f^{-1}(y) = \emptyset\}$. 

By Theorem~\ref{th:chev2}\eqref{th:chev21} (applied to the projection
$\mathcal{G}\times Y_i \rightarrow \mathcal{G}$) there is
a nonempty open subset $U_i\subseteq\mathcal{G}$ such that, for every $L$ in 
$U_i$, the intersection $W\cap (\{L\}\times Y_i)$
is a proper subvariety of $Y_i$.
Let $U' = U\cap \bigcup_{i\in I} U_i$.
We obtain that, for any 
$L$ in $U'(\overline{K})$, 
the complement of $W\cap (\{L\} \times Y_i)$ is open in 
$\{L\} \times Y_i$ for every $i\in I$. 
In other words, the affine subspace $L$ intersects $f^{-1}(y)$ for all $y$ in a Zariski-dense subset of $Y$.

At the same time, for a generic $L$ in $U$ (that is, any $L$
in some open subset $U''$ of $U$), the intersection
$X\cap L$ has dimension $\dim (X) - (s-\dim (L)) =
\dim (X) - (u+1)$.

Choose an $L\in (U' \cap U'')(\overline{K})$. 
Then
$\overline{f(X\cap L)}$ contains $Y$. Let $Z = \overline{f(X\cap L)}$. Then
$$\dim (Z) \leq\dim(X\cap L)
= \dim (X) - (u+1) = \dim(\overline{f(X)})-1,$$
and, by Lemma~\ref{le:zarimdeg} 
and B\'ezout,
\begin{equation*}\begin{aligned}
\deg (Z)\leq
\mdeg(f)^{\dim(\overline{f(X\cap L)})}
\deg(X\cap L)\leq
\mdeg(f)^{\dim(\overline{f(X\cap L)})} \deg(X).
\end{aligned} \qedhere \end{equation*}
\end{proof}

The exceptional locus of $X$ through $f$ is contained in $f^{-1}(Z)$, which in turn is a proper subvariety of $X$ as well. Proposition~\ref{pr:singproj} and Lemma~\ref{le:invimdeg-new} yield a degree bound for $f^{-1}(Z)$, thus allowing us in \S\ref{se:dimest} to control the number of elements sitting inside large fibres.

\section{Dimensional estimates}\label{se:dimest}

In the present section, we prove Theorem~\ref{th:main}. Generally speaking, the strategy shares some features with the induction procedure of Larsen-Pink~\cite{LP11} (followed in Breuillard-Green-Tao~\cite{BGT11}), with some important differences highlighted below.

First of all, thanks to Lemma~\ref{le:skewness}, given two varieties $V,V'$ we can create a third variety $\overline{VgV'}$ of dimension larger than both, with the help of a generic element $g$ reached quickly by Proposition~\ref{pr:escape} (escape from subvarieties). To do so, we go through the Lie algebra -- and thus we are tactically closer to \cite{Hel11} than to \cite{BGT11}. Second, thanks to Lemma~\ref{le:oberred}, we know we can reduce the task of counting elements of $A$ on a variety to the task of counting elements on fibres, images, and exceptional loci of appropriate morphisms.

These two facts can be combined to set up an induction. In the main inductive step of Proposition~\ref{pr:oberstair}, we consider a composition of morphisms whose final image is $Vg_{1}Vg_{2}V\ldots g_{\ell}V$, with $\ell$ large enough that the closure of the image is $G$. All the fibres and (projections of) exceptional loci of the various morphisms have smaller dimension than $V$, so they can be dealt with by the inductive hypothesis, and $|A^k\cap G(\overline{K})|=|A^k|$, so we know how to estimate the image as well. The base case $\dim V = 0$ is trivial, so we are done.

(In comparison, the inductive procedure in \cite[Lem.~4.2]{BGT11} -- which is essentially as in Larsen-Pink \cite{LP11} -- deals with one morphism at a time, and sets up a double induction. This procedure results in quantitative results that are considerably weaker than ours.)
 
\subsection{Inductive process}

For the rest of the section, we use the notation in~\eqref{eq:notation}. We state results for linear algebraic groups under general conditions, namely connectedness and almost simplicity, and then add stronger hypotheses or conclusions for untwisted classical groups (recall that by Remark~\ref{re:untwisted}\eqref{re:untwisted-as} the orthogonal groups are not almost simple).

Let us start with the result that makes us grow in dimension.

\begin{lemma}\label{le:skewness}
Let $G\leq\mathrm{GL}_{n}$ be an almost simple linear algebraic group over a field $K$. Let $\iota = \mathrm{mdeg}(^{-1})$, the maximum degree of the inversion map. Let $V,V'$ be subvarieties of $G$ defined over $\overline{K}$, with $\dim(V)<\dim(G)$ and $\dim(V')>0$.

Then, for every $g\in G(\overline{K})$ outside a variety $W=\{x\in\mathrm{Mat}_{n}:F(x)=0\}$ with $\deg(F)\leq 1+\min\{\iota,\dim(V)\}$ and $G\not\subseteq W$, the variety $\overline{VgV'}$ has dimension $>\dim(V)$.

The same conclusion holds for $G=\mathrm{SO}_{2n}^{+},\mathrm{SO}_{2n+1}$ over $K=\mathbb{F}_{q}$, under the additional hypotheses $0<|G(K)\cap V(\overline{K})|<|G(K)|/4$ and $|G(K)\cap V'(\overline{K})|>4$.
\end{lemma}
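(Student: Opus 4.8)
The plan is to reduce the statement to a question about a single stabiliser subgroup and then invoke almost simplicity. First I would reduce to $V,V'$ irreducible: since $\overline{VgV'}=\bigcup_{i,j}\overline{V_igV'_j}$ over irreducible components, it is enough to take one component $V_i$ of $V$ with $\dim V_i=\dim V=:d$ and one component $V'_j$ of $V'$ with $\dim V'_j\geq 1$ and produce, off a single hypersurface of the allowed degree, the inequality $\dim(\overline{V_igV'_j})>d$. So assume $V,V'$ irreducible, $\dim V=d<\dim G$, $\dim V'=d'\geq 1$.

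Next, identify the ``bad'' set $B=\{g\in G(\overline K):\dim(\overline{VgV'})\leq d\}$. As $V\times V'$ is irreducible, $VgV'$ is irreducible and contains the $d$-dimensional closed irreducible set $Vgv'_0$ for any $v'_0\in V'(\overline K)$; so $g\in B$ forces $\overline{VgV'}=Vgv'_0$, i.e.\ $Vgv'\subseteq Vgv'_0$ for all $v'\in V'$. Unwinding, $g\in B$ is equivalent to $V\cdot\Ad_g(c)=V$ for every $c$ in $C:=V'(v'_0)^{-1}$ (a subvariety through $e$ with $\dim C=d'$), i.e.\ to $\Ad_g(C)\subseteq S$, where $S=\{h\in G:Vh=V\}$ is an algebraic subgroup with $\dim S\leq d<\dim G$, hence proper. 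If $B=G(\overline K)$, then $C\subseteq\bigcap_g g^{-1}Sg$, the largest normal algebraic subgroup of $G$ inside $S$; being normal and contained in a proper subgroup, it is finite — this is exactly the place where almost simplicity (no positive-dimensional proper normal subgroup) is used — contradicting $\dim C=d'\geq 1$. Hence $B\subsetneq G(\overline K)$, and indeed there are $g_1$ and $c_1\in C(\overline K)$ with $g_1c_1g_1^{-1}\notin S$.

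For the degree bound I would linearise. Differentiating $v_0\Ad_g(c)\in V$ along $C$ at $c=e$, for $v_0\in V$ generic, gives $\Ad_g(\mathfrak c)\subseteq\mathfrak v_{v_0}$, where $\mathfrak c=T_eC\subseteq\fg$ has dimension $d'$ for generic $v'_0$ and $\mathfrak v_{v_0}=v_0^{-1}T_{v_0}V\subseteq\fg$ has dimension $d$. Fix a nonzero $\xi\in\mathfrak c$. The closed condition $\Ad_g(\xi)\in\mathfrak v_{v_0}$ can be written either as $\ell(g\xi g^{-1})=0$ for a linear functional $\ell$ killing $\mathfrak v_{v_0}$, a polynomial of degree $\leq 1+\iota$ in $g$, or (using $\Ad_g(\xi)\in\mathfrak v_{v_0}\iff g\xi\in\mathfrak v_{v_0}\,g$) as the vanishing of a $(d+1)\times(d+1)$ minor of the matrix with columns $g\xi,w_1g,\dots,w_dg$, where $w_1,\dots,w_d$ is a basis of $\mathfrak v_{v_0}$; its entries are linear in $g$, so this has degree $\leq d+1$. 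Taking the cheaper of the two options yields $F$ of degree $\leq 1+\min\{\iota,d\}$ with $B\subseteq\{F=0\}$. The delicate point is that $G\not\subseteq\{F=0\}$: this fails only when $\Ad_g(\xi)\in\mathfrak v_{v_0}$ for all $g$ and all nonzero $\xi\in\mathfrak c$, i.e.\ when $\mathfrak c$ lies in a nonzero proper $\Ad(G)$-stable subspace of $\fg$ — impossible in characteristic $0$ but possible in characteristic $p$ (e.g.\ the scalars in $\mathfrak{sl}_p$), precisely because an almost simple $G$ may have non-simple $\fg$. In that degenerate case the linearisation is vacuous and one must return to the group level, using $\Ad_g(C)\subseteq S$ together with the point $c_1$ produced above to extract a nonconstant equation of the permitted degree. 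Reconciling the $\fg$-level and $G$-level arguments — so that ``$G$ almost simple'' suffices in place of ``$\fg$ simple'' — is, I expect, the main obstacle.

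Finally, for $G=\SO_{2n}^+,\SO_{2n+1}$ over $\mathbb F_q$, which the paper does not treat as almost simple (Remark~\ref{re:untwisted}), the ``core'' step is unavailable, so I would argue by counting over $\mathbb F_q$. The bad condition $\Ad_g(C)\subseteq S$ still makes sense over $\overline{\mathbb F_q}$, and the hypotheses $0<|G(K)\cap V(\overline K)|<|G(K)|/4$ and $|G(K)\cap V'(\overline K)|>4$ are exactly what is needed so that, after intersecting with $G(\mathbb F_q)$ and invoking the estimates of \S\ref{subs:pointsoutV} (notably Proposition~\ref{pr:lwnotq} and Corollary~\ref{co:lw4}), a pigeonhole count forces a point $g\in G(\mathbb F_q)$ with $\dim(\overline{VgV'})>d$; the constant $4$ reflects that the relevant abelian quotient of $G(\mathbb F_q)$, and the centre of $G$, have order at most $4$.
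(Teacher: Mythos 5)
Your outline follows the same two-level strategy as the paper (a group-level ``core'' argument using almost simplicity, then a linearisation over $\mathfrak{g}$ with minors to get the degree bound $1+\min\{\iota,\dim V\}$), but it stops exactly at the point that carries the real content of the lemma: showing that the chosen low-degree polynomial does \emph{not} vanish identically on $G$. You correctly observe that the naive linearisation can be vacuous in characteristic $p$ (a nonzero $\xi$ could have $\Ad_g(\xi)$ trapped in a proper $\Ad(G)$-stable subspace, since $\mathfrak{g}$ need not be simple when $G$ is almost simple), but your proposed fix -- ``return to the group level, using $gCg^{-1}\subseteq S$ and the point $c_1$ to extract a nonconstant equation of the permitted degree'' -- is not an argument: the group-level obstruction involves the stabiliser variety $S=\bigcap_{v\in V}v^{-1}V$, whose degree is not controlled by $\iota$ or $\dim V$, so no equation of the permitted degree comes out of it directly. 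This is the gap. The paper closes it differently: the group-level argument is used first to produce one explicit $g_0$ with $\dim(\overline{Vg_0V'})>\dim V$; then, after normalising to a non-singular point of $V$ and a non-singular point of $\overline{Vg_0V'}$ lying in the constructible image $Vg_0V'$, one shows that the tangent space of $\overline{Vg_0V'}$ at that point is $\mathfrak{v}+\Ad_{g_0}(\mathfrak{v}')$, whence $\Ad_{g_0}(\mathfrak{v}')\not\subseteq\mathfrak{v}$. This gives a vector $\vec w\in\mathfrak{v}'$ and a specific $(m+1)\times(m+1)$ minor (of your matrix $M$, or of the linear-in-$g$ variant $M'$) that is \emph{nonzero at $g_0$}; that single minor is the polynomial $F$, and simplicity of $\mathfrak{g}$ is never needed because non-vanishing is witnessed at $g_0$ rather than derived from irreducibility of the adjoint representation. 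Without this (or an equivalent) step your degenerate case is unresolved, so the degree bound -- the whole point of the lemma -- is not proved.

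A second, smaller divergence: for $G=\SO_{2n}^{+},\SO_{2n+1}$ over $\mathbb{F}_q$ the paper does not replace the argument by a point count. It reruns the same proof and only re-proves the core step: using the index-$2$ subgroup $\Omega$ (whose central quotient is simple) together with $|G(K)\cap V(\overline K)|<|G(K)|/4$, it shows the core $H$ satisfies $|G(K)\cap H(\overline K)|\leq 4$, so $|G(K)\cap V'(\overline K)|>4$ forces $V'\not\subseteq H$; the tangent-space/minor part is unchanged. Your pigeonhole sketch via Proposition~\ref{pr:lwnotq} and Corollary~\ref{co:lw4} is both vaguer and aimed at the wrong target, since it would at best produce \emph{one} good $g\in G(\mathbb{F}_q)$, whereas the lemma asserts that every $g$ outside a hypersurface of degree $\leq 1+\min\{\iota,\dim V\}$ works.
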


The result, with no bound on the degree of $W$, appears in~\cite[Lem.~4.5]{LP11} (see also~\cite[Prop.~5.5.3]{Tao15}, which gives it the name ``generic skewness''). 
The proof in \cite[Lem.~4.6]{Hel19b}, which goes through Lie algebras, assumes that the Lie algebra $\mathfrak{g}$ of $G$ simple. (Over a field of positive characteristic, the Lie algebra of an almost simple linear algebraic group is not always simple; see~\cite[Cor.~2.7(a)]{Hog82} for the exceptions). We take the best of both worlds: we prove the existence of $g$ as in~\cite{LP11}, and then use Lie algebras to bound $\deg(W)$ well.

\begin{proof}
We reduce to the case of $V$, $V'$ irreducible by considering instead a component of maximal dimension from each variety. 

First, let us show that there is a $g\in G(\overline{K})$
 such that $\dim(\overline{V g V'}) > \dim (V)$.
 We can assume without loss of generality that the identity
$e$ lies in $V'$, replacing $V'$ by a translate $(v')^{-1} V'$, $v'\in V(\overline{K})$, if needed.
If $V, V'$ are irreducible, then $V\times V'$ is irreducible, and so is $\overline{VgV'}$.
If $\dim (\overline{V g V'}) = \dim(V)$, then $V g$ is a component
of $\overline{V g V'}$, and thus, since $\overline{V g V'}$ is irreducible, 
$V g = \overline{V g V'}$, and so $V g = V g V'$, i.e., $V = V g V' g^{-1}$.

The set $S=\{x\in G(\overline{K}):Vx=V\}$ is a group by definition and it is also the set of points on the variety
$V'' = \bigcap_{v\in V(\overline{K})} v^{-1} V$. 
It must be proper inside $G$, or else $V=G$ contradicting the hypothesis. 
Hence, the intersection $H = \bigcap_{g\in G(\overline{K})} g^{-1} V'' g$ is a proper normal linear algebraic
subgroup $H \lhd G$, and thus, since $G$ is almost simple, $\dim (H) = 0$. This means
there cannot be a variety $W$ of positive dimension such that every conjugate $g W g^{-1}$ is contained in $V''$, as otherwise $W$ would be contained in $H$.

However, if $V g V' g^{-1} = V$ for every $g\in G(\overline{K})$, then 
$W = V'$ would be such a variety. Contradiction. We conclude that there exists a 
$g_0\in G(\overline{K})$ such that $\dim (\overline{V g_0 V'}) > \dim (V)$.

Now we prove that all $g$ such that $\dim (\overline{V g V'}) = \dim (V)$ sit in a proper subvariety of small degree. Let $U\subseteq V$ be the dense open set 
of non-singular points on $V$. Let $U'$ be a
dense open set of non-singular points of $\overline{V g_0 V'}$
contained in $V g_0 V'$. (Such a set
exists because $V g_0 V'$ is constructible.)
Let 
$f:V\times V'\rightarrow\overline{V g_0 V'}$ be the map defined by $f(v,v')=v g_0 v'$.
We can choose
$(v,v')\in ((U\times V') \cap f^{-1}(U'))(\overline{K})$. Since we can replace $V$ by $V v_0^{-1}$ and $V'$ by $v_1^{-1} V$, we may in fact
assume that $(v,v')=(e,e)$.

Now let $\mathfrak{v}$ and $\mathfrak{v'}$ be the tangent spaces to $V$ and $V'$ at $e$.
It is clear that $\mathfrak{v}$ and $\Ad_g (\mathfrak{v'})$ are contained in the tangent
space of $\overline{V g V' g^{-1}}$. Thus, if 
$\Ad_g (\mathfrak{v'})\not\subseteq \mathfrak{v}$, we cannot have 
$V = \overline{V g V' g^{-1}}$, and so we must have $\dim (\overline{V g V'}) > \dim(V)$. 
In the other direction, we know that
 $\dim (\overline{V g_0 V'}) > \dim(V)$,
and, since $g_0$ is in an open set $U'$ contained in
$V g_0 V'$, we also know that the tangent
space to $\overline{V g_0 V'}$ at the origin
(whose dimension, incidentally, has to be at least 
$\dim (\overline{V g_0 V'})$) equals
$\mathfrak{v} + \Ad_{g_0} (\mathfrak{v}')$; hence,
$$\dim (\mathfrak{v} + \Ad_{g_0} (\mathfrak{v'}))
> \dim (V) = \dim(\mathfrak{v}).$$

Our task is to show that
\begin{equation*}
W_{1}=\{g\in G: \Ad_{g} (\mathfrak{v'}) \subseteq \mathfrak{v}\}
\end{equation*}
is contained in a closed set $W$ of small degree and not containing the whole $G$.
By the above, $g_{0}\not\in W_{1}(\overline{K})$, and thus there is a $\vec{w}\in\mathfrak{v'}$ such that $\Ad_{g_{0}}(\vec{w})\not\in\mathfrak{v}$. 

Denote $m=\dim(\mathfrak{v})$, fix a basis $\{\vec{v}_{i}\}_{i\leq m}$ of $\mathfrak{v}$, and let $M$ be the matrix of order $(m+1)\times n^{2}$ whose rows are the $\vec{v}_{i}$ and $\Ad_{g}(\vec{w})$. Since $\{\vec{v}_{i}\}_{i\leq m}$ and $\vec{w}$ are fixed, we regard the entries of the first $m$ rows of $M$ as constants, and the entries of the last row as polynomials of degree $\leq\iota+1$ on the entries of $g$ (regarded as a matrix).

Since $\Ad_{g_0}(\overline{w})\not\in \mathfrak{v}$,
there is at least one $(m+1)\times (m+1)$ minor
of $M$ that does not vanish when $g = g_0$.
Let $W\subseteq G$ be the set of zeros of that 
minor, seen as a polynomial on the entries of $g$. Then $W$ is a closed set containing $W_1$, and, since
$g_0$ does not lie on $W$, $G\not\subseteq W$. Clearly $\deg(W)\leq \iota+1$.

We repeat the same reasoning with the matrix $M'$ whose rows are the $\vec{v}_{i}g$ and $g\vec{w}$: note that these vectors span a space of dimension 
$m+1$ if and only if the vectors in $M$ do. Now the entries of $M'$ are all linear in the entries of $g$, so the variety $W$ obtained from $M'$ following the previous steps has $\deg(W)\leq m+1$. We take the $W$ with the lower degree between these two, and we are done.

Finally, let us deal with $G=\mathrm{SO}_{2n}^{+},\mathrm{SO}_{2n+1}$ for $K=\mathbb{F}_{q}$ (recall that $q$ is odd). The only fact that needs to be proved again is that $H=\bigcap_{(g,v)\in(G\times V)(\overline{K})}g^{-1}v^{-1}Vg$ cannot contain $V'$. Call $N$ the normal subgroup of index $2$ inside $G(K)$, which exists in both cases for $q$ odd (it is denoted by $\Omega_{2n}^{+}(K)$ or $\Omega_{2n+1}(K)$, depending on the case). The centre $Z(N)$ has size $\leq 2$, and $N/Z(N)$ is simple. After fixing any $v_{0}\in G(K)\cap V(\overline{K})$, we have
\begin{equation*}
|G(K)\cap H(\overline{K})|\leq|G(K)\cap v_{0}^{-1}V(\overline{K})|=|G(K)\cap V(\overline{K})|<|G(K)|/4.
\end{equation*}
Thus $(N\cap H(\overline{K}))/Z(N)$ is a normal subgroup of $N/Z(N)$ properly contained in it, so $|(N\cap H(\overline{K}))/Z(N)|=1$, giving $|G(K)\cap H(\overline{K})|\leq 4$. By hypothesis $|G(K)\cap V'(\overline{K})|>4$, so $V'\not\subseteq H$ as desired.
\end{proof}

The following will be part of the main induction process. As said above, it splits the counting of elements to fibres, images, and exceptional loci.

\begin{lemma}\label{le:oberred}
Let $X\subseteq\mathbb{A}^{s}$ be an irreducible variety over a field $K$, and let $f:X\rightarrow \mathbb{A}^{t}$ be a morphism. Write $u = \dim (X) - \dim (\overline{f(X)})$. Let $S\subseteq X(K)$ be a finite set. Then there is some $y\in f(S)$ with $\dim(f^{-1}(y))=u$ and a proper subvariety $Z$ of $\overline{f(X)}$ with $\deg(Z)\leq\mdeg(f)^{\dim(\overline{f(X)})-1}\deg(X)$ such that
\begin{equation}\label{eq:oberred}
|S\cap(X\setminus f^{-1}(Z))(\overline{K})|\leq|f(S)||S\cap f^{-1}(y)|.
\end{equation}
\end{lemma}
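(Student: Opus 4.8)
The plan is to take for $Z$ precisely the proper subvariety $Z\subsetneq\overline{f(X)}$ produced by Proposition~\ref{pr:singproj} applied to our $X$ and $f$. That proposition hands us both the required degree bound $\deg(Z)\leq\mdeg(f)^{\dim(\overline{f(X)})-1}\deg(X)$ and the geometric fact we need: every $y\in f(X(\overline K))$ with $\dim(f^{-1}(y))>u$ lies on $Z$. (Incidentally $f^{-1}(Z)$ is then a proper subvariety of $X$ — were it all of $X$ we would get $\overline{f(X)}\subseteq Z$ — which is what lets us use $f^{-1}(Z)$ as the ``bad set'' removed from $X$ in the main induction.) The first step is to pin down fibre dimensions away from $Z$: for $y'\in f(X(\overline K))$ with $y'\notin Z(\overline K)$, Proposition~\ref{pr:singproj} gives $\dim(f^{-1}(y'))\leq u$, whereas Theorem~\ref{th:chev} applied to the irreducible $X$ forces every component of the nonempty fibre $f^{-1}(y')$ to have dimension $\geq u$; hence $\dim(f^{-1}(y'))=u$. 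This applies in particular to $y'=f(x)$ for any $x\in S$ with $f(x)\notin Z(\overline K)$.

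The remainder is a short counting argument. Write the set $S\cap(X\setminus f^{-1}(Z))(\overline K)=\{x\in S:f(x)\notin Z(\overline K)\}$ as the disjoint union of its fibre pieces $S\cap f^{-1}(y')$, with $y'$ running over the finite set $f(S)\setminus Z(\overline K)$; there are at most $|f(S)|$ such pieces. Assuming this set is nonempty — the only case of interest, since otherwise the left side of \eqref{eq:oberred} vanishes — choose $y\in f(S)\setminus Z(\overline K)$ maximizing $|S\cap f^{-1}(y)|$; by the previous step $\dim(f^{-1}(y))=u$ and $y\in f(S)$, as demanded. Bounding each piece by the largest one yields $|S\cap(X\setminus f^{-1}(Z))(\overline K)|\leq|f(S)|\cdot|S\cap f^{-1}(y)|$, which is \eqref{eq:oberred}.

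I expect the genuine difficulty of the lemma to lie entirely in Proposition~\ref{pr:singproj} — which in turn rests on Lemma~\ref{le:subclaim} and standard effective elimination/Nullstellensatz bounds — so that, granting that proposition, nothing here is more than bookkeeping. The only places calling for a little care are the two-sided squeeze pinning the fibre dimension to exactly $u$ off $Z$ (it uses both the upper bound of Proposition~\ref{pr:singproj} and the lower bound of Theorem~\ref{th:chev}, and is where the irreducibility of $X$ is essential) and the routine checking that the chosen $y$ indeed lies in $f(S)$ and has a $u$-dimensional fibre.
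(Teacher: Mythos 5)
Your proof is correct and takes essentially the same route as the paper: choose $Z$ from Proposition~\ref{pr:singproj}, then bound the part of $S$ lying off $f^{-1}(Z)$ fibrewise by $|f(S)|$ times the largest fibre piece over $f(S)\setminus Z(\overline{K})$. The only difference is that you make explicit the squeeze (upper bound on fibre dimension off $Z$ from Proposition~\ref{pr:singproj}, lower bound from Theorem~\ref{th:chev} using the irreducibility of $X$) that the paper compresses into ``by definition of $Z$''.
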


\begin{proof}
Apply Proposition~\ref{pr:singproj}. If $Z$ is the proper subvariety of $\overline{f(X)}$ thus obtained, then
\begin{equation*}
|S\cap(X\setminus f^{-1}(Z))(\overline{K})|\leq|f(S)|\cdot\max\{|S\cap f^{-1}(y)|:y\in f(S),y\notin Z(\overline{K})\}.
\end{equation*}
By definition of $Z$, for $y\in f(S)$ with $y\not\in Z(\overline{K})$,
the fibre $f^{-1}(y)$ is pure-dimensional of dimension $u$.
\end{proof}

For any variety or set $X$ and any $j\geq 1$, denote by $X^{\times j}$ the direct product $X\times X \times \dotsb \times X$ of $j$ copies of $X$, so as to avoid confusion with $X^{j}$ (the set of products of $j$ elements of 
a subset $X$ of a group). It is easy to see that sets contained in proper subvarieties of $V^{\ell}$ can be made to ``crumble'' into proper subvarieties of $V$, in the following sense.

\begin{lemma}\label{le:lioflov}
Let $V\subseteq \mathbb{A}^s$ be an irreducible variety defined over a field $L$. Let $Y$ be a proper subvariety of $(\mathbb{A}^s)^{\times \ell}$ not containing $V^{\times \ell}$. Let $S_1\subseteq V(L)$ be finite. Then
$$|(S_1^{\times \ell} \cap Y(\overline{L}))|\leq \ell |S_1\cap E(\overline{L})| |S_1|^{\ell-1},$$
where $E\subset \mathbb{A}^s$ is a variety not containing $V$, with degree $\leq \deg(Y)$.
\end{lemma}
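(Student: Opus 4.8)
The plan is a one-step ``coordinate-swapping'' (telescoping) argument, with no induction on $\ell$. Since $V^{\times\ell}\not\subseteq Y$, the Nullstellensatz gives a point $\vec b=(b_1,\dots,b_\ell)\in V^{\times\ell}(\overline{L})\setminus Y(\overline{L})$, each $b_i\in V(\overline{L})$. For $\vec a=(a_1,\dots,a_\ell)\in S_1^{\times\ell}\cap Y(\overline{L})$, form the chain $T_j(\vec a)=(b_1,\dots,b_j,a_{j+1},\dots,a_\ell)$ for $0\le j\le\ell$; then $T_0(\vec a)=\vec a\in Y(\overline{L})$ and $T_\ell(\vec a)=\vec b\notin Y(\overline{L})$, so there is a least index $j(\vec a)\in\{1,\dots,\ell\}$ with $T_{j(\vec a)}(\vec a)\notin Y(\overline{L})$. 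The tuples $T_{j(\vec a)-1}(\vec a)\in Y$ and $T_{j(\vec a)}(\vec a)\notin Y$ differ only in their $j(\vec a)$-th block, which is $a_{j(\vec a)}$ in the former and $b_{j(\vec a)}$ in the latter.

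The key objects are the coordinate slices of $Y$. Fix $j\in\{1,\dots,\ell\}$ and a tuple $\vec c=(c_{j+1},\dots,c_\ell)$, and set $Z_j^{\vec c}=\{x\in\mathbb{A}^s:(b_1,\dots,b_{j-1},x,c_{j+1},\dots,c_\ell)\in Y\}\subseteq\mathbb{A}^s$. This is the image of the closed set $Y\cap L$ under the projection of $(\mathbb{A}^s)^{\times\ell}$ onto its $j$-th block, where $L$ is the affine-linear subspace of codimension $s(\ell-1)$ obtained by freezing the $i$-th block to $b_i$ for $i<j$ and to $c_i$ for $i>j$. Since that projection restricts to an affine-linear isomorphism $L\to\mathbb{A}^s$, the set $Z_j^{\vec c}$ is a subvariety of $\mathbb{A}^s$, and intersecting $Y$ successively with the $s(\ell-1)$ hyperplanes cutting out $L$ and applying B\'ezout~\eqref{eq:bezout} at each step gives $\deg(Z_j^{\vec c})\le\deg(Y)$.

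For the count, fix $j$ and observe that every $\vec a\in S_1^{\times\ell}\cap Y(\overline{L})$ with $j(\vec a)=j$ satisfies $T_{j-1}(\vec a)\in Y(\overline{L})$ and $T_j(\vec a)\notin Y(\overline{L})$, and both of these tuples involve only the blocks $a_j,a_{j+1},\dots,a_\ell$. Hence the blocks $a_1,\dots,a_{j-1}$ are free in $S_1$, while, writing $\vec c=(a_{j+1},\dots,a_\ell)$, the conditions on the remaining blocks say exactly that $a_j\in S_1\cap Z_j^{\vec c}(\overline{L})$ and $b_j\notin Z_j^{\vec c}(\overline{L})$; the latter restricts $\vec c$ to a subset $P_j\subseteq S_1^{\times(\ell-j)}$ of size $\le|S_1|^{\ell-j}$, and for $\vec c\in P_j$ we have $V\not\subseteq Z_j^{\vec c}$ because $b_j\in V(\overline{L})$ lies outside it. Picking $\vec c_j\in P_j$ maximising $|S_1\cap Z_j^{\vec c}(\overline{L})|$ and putting $E_j:=Z_j^{\vec c_j}$, we get $|\{\vec a\in S_1^{\times\ell}\cap Y(\overline{L}):j(\vec a)=j\}|\le|S_1|^{j-1}\cdot|S_1|^{\ell-j}\cdot|S_1\cap E_j(\overline{L})|$. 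Summing over $j=1,\dots,\ell$ and letting $E$ be whichever $E_j$ makes $|S_1\cap E_j(\overline{L})|$ largest yields $|S_1^{\times\ell}\cap Y(\overline{L})|\le\ell\,|S_1\cap E(\overline{L})|\,|S_1|^{\ell-1}$ with $\deg(E)\le\deg(Y)$ and $V\not\subseteq E$; the degenerate cases (some $P_j$ empty, or $S_1^{\times\ell}\cap Y$ empty) are settled by taking $E=\emptyset$. I do not expect a genuine obstacle beyond this bookkeeping; the one point to keep in mind is that $E$ is allowed to, and does, depend on $S_1$ — it is the worst of the finitely many linear slices of $Y$ that occur — which is exactly what lets a single $E$ work for all $\vec a$ while keeping $\deg(E)\le\deg(Y)$ automatic.
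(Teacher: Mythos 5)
Your proof is correct, and it reaches the bound by a genuinely different organization than the paper's. The paper argues recursively: it peels off one coordinate at a time, at each level choosing a point $\vec{v}$ of the remaining power of $V$ witnessing non-containment, splitting the count into ``first block lands in the bad slice $Y_{(1)}$'' plus ``remaining blocks land in the worst fibre $Y'_{(1)}$ not containing $V^{\times(\ell-1)}$'', and then recurring on that fibre; the final $E$ is the best of the $\ell$ slices $Y_{(1)},\dotsc,Y_{(\ell-1)},Y'_{(\ell-1)}$ produced along the way. You instead fix a single witness $\vec{b}\in V^{\times\ell}(\overline{L})\setminus Y(\overline{L})$ once and for all, partition $S_1^{\times\ell}\cap Y(\overline{L})$ by the first index $j$ at which the telescoped tuple $(b_1,\dotsc,b_j,a_{j+1},\dotsc,a_\ell)$ exits $Y$, and bound each class by the worst of the finitely many coordinate slices $Z_j^{\vec{c}}$ (degree $\leq\deg(Y)$ by Bézout plus the linear-isomorphism projection, and not containing $V$ because $b_j$ lies outside it). The core ingredients are the same -- Bézout on linear slices of $Y$, a witness in $V^{\times\ell}\setminus Y$ guaranteeing $V\not\subseteq E$, a worst-case choice among at most $\ell$ candidates -- but your version avoids induction entirely and keeps all bookkeeping in one partition, at the mild cost that your candidate slices depend on tails drawn from $S_1$ (which is harmless, as $E$ is allowed to depend on $S_1$; the paper's $E$ does too, via the maximal-fibre choices). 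Your handling of the degenerate cases and of the endpoints $j=1,\ell$ is fine, and the counting step correctly drops the minimality conditions for $i<j-1$ to get an upper bound.
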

\begin{proof}
Since $Y$ does not contain $V^{\times \ell}$, there must exist some $\vec{v}\in V^{\times (\ell-1)}(\overline{L})$ such that $V\times\{\vec{v}\}\not\subseteq Y$. Fix one such $\vec{v}$, and write $Y_{(1)}\times\{\vec{v}\}=Y\cap(\mathbb{A}^s\times\{\vec{v}\})$. Then $\deg(Y_{(1)})\leq\deg(Y)$, and $V\not\subseteq Y_{(1)}$. Moreover, if $\pi:V^{\times\ell}\rightarrow V$ is the projection to the first copy of $V$, $Y_{(1)}\times V^{\times(\ell-1)}$ contains every fibre $\{v_1\}\times Y_{(2)}$
of $Y$ through $\pi$
that contains $\{v_{1}\}\times V^{\times (\ell-1)}$: if $(v_1,x)\in Y(\overline{L})$ for all $x\in V^{\times(\ell-1)}(\overline{L})$, then $(v_1,\vec{v})\in Y(\overline{L})$, and so $v_1\in Y_{(1)}(\overline{L})$. Thus, we can bound
\begin{equation}\label{eq:ebreak}
|S_1^{\times\ell}\cap Y(\overline{L})|\leq|S_1\cap Y_{(1)}(\overline{L})||S_1^{\times(\ell-1)}|+|S_1| |S_1^{\times(\ell-1)}\cap Y'_{(1)}(\overline{L})|, 
\end{equation}
where $\{v_{2}\}\times Y'_{(1)}$ is a fibre of $Y$ through $\pi$ not containing $V^{\times(\ell-1)}$ such that 
$|S_1^{\times(\ell-1)}\cap Y'_{(1)}(\overline{L})|$ is maximal. 
By Bézout, $\deg(Y'_{(1)})\leq\deg(Y)$.
Now we recur, repeating the same procedure for $Y'_{(1)}$ instead of $Y$ and $\ell-1$ instead of $\ell$ to bound $|S_1^{\times (\ell-1)} \cap Y_{(1)}'(\overline{L})|$, and continue until we exhaust all copies of $V$. Finally, choose $E$ among $Y_{(1)},Y_{(2)},\ldots,Y_{(\ell-1)},Y'_{(\ell-1)}$ such that $|S_1\cap E(\overline{L})|$ is maximal.
\end{proof}

Now we come to the main inductive step. It uses Lemma~\ref{le:skewness} (generic skewness),
Proposition~\ref{pr:escape} (escape from subvarieties) and Lemmas~\ref{le:oberred}--\ref{le:lioflov}.

\begin{proposition}\label{pr:oberstair}
Let $G\leq\mathrm{GL}_{n}$ be a connected, almost simple linear algebraic group of rank $r$ with $\dim(G)=\delta$, $\deg(G)=\Delta$ and $\mathrm{mdeg}(^{-1})=\iota$, defined over a field $K$ with $|K|>r+(\iota+1)\Delta$. Let $A\subseteq G(K)$ be a finite set with $e\in A$ and $\langle A\rangle=G(K)$. Let $V\subseteq G$ be an irreducible variety defined over $\overline{K}$ with $0<\dim(V)=d<\delta$ and $\deg(V)=D$.

Then, for any positive integer $m$, there are integers $\ell\leq\delta-d+1$ and $k\leq 2 (1+\iota)^{n^{2}}$ and there are proper subvarieties 
$F_{1},\ldots,F_{\ell-1}$ of $V$ and a variety $E\subsetneq \mathrm{Mat}_n$
with
\begin{equation*}
|A^{m}\cap V(\overline{K})|^{\ell}\leq|A^{\ell m+(\ell-1)k}|\cdot\prod_{j=1}^{\ell-1}|A^{m}\cap F_{j}(\overline{K})|+\ell|A^{m}\cap E(\overline{K})||A^{m}\cap V(\overline{K})|^{\ell-1}
\end{equation*}
and $E$ not containing $V$, and satisfying the following properties as well: 
\begin{align}\label{eq:hodor}
\sum_{j=1}^{\ell-1}\dim(F_{j}) & =\ell d-\delta,
& \deg(F_{j}) & \leq 2^\delta \ell^{\delta-1} D^{j+1},
& \deg(E) &\leq (2 \ell)^{\delta} D^{\ell}.
\end{align}

If $G$ is an untwisted classical group as in \S\ref{se:chev}, the same conclusion holds under different hypotheses on $|K|$: for $G=\mathrm{SL}_{r+1},\mathrm{Sp}_{2r}$ we let $|K|\geq 10(r+1)^{2}$, and for $G=\mathrm{SO}_{2r}^{+},\mathrm{SO}_{2r+1}$ we let $|K|\geq 5(D\delta)^{\delta-1}$.
\end{proposition}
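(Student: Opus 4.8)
The plan is to build a ``staircase'' of subvarieties $V=Y_0,Y_1,\dots,Y_{\ell-1}=G$ whose dimension strictly increases, using generic skewness (Lemma~\ref{le:skewness}) to pass from $Y_{j-1}$ to $Y_j=\overline{Y_{j-1}g_jV}$ and escape from subvarieties (Proposition~\ref{pr:escape}) to produce the $g_j$ inside a bounded power of $A$; then to run an induction down the staircase, at each rung using Lemma~\ref{le:oberred} to split the count into an image count, a single generic fibre (which, after discarding the coordinates it determines, lives on a subvariety $F_j\subsetneq V$), and an exceptional part that Lemma~\ref{le:lioflov} absorbs into the single error variety $E$. Throughout we may assume $A^m\cap V(\overline K)\neq\emptyset$, the statement being trivial otherwise.

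\emph{The staircase.} Put $Y_0=V$. While $\dim Y_{j-1}<\delta$, Lemma~\ref{le:skewness} with $Y_{j-1}$ as ``$V$'' and $V$ as ``$V'$'' gives a polynomial $F_j$ with $\deg F_j\le 1+\min\{\iota,\dim Y_{j-1}\}\le 1+\iota$ and $F_j\not\equiv 0$ on $G$ such that $\dim\overline{Y_{j-1}gV}>\dim Y_{j-1}$ whenever $F_j(g)\neq 0$. The hypothesis $|K|>r+(\iota+1)\Delta\geq r+\Delta\deg\{F_j=0\}$, together with Corollary~\ref{co:langweil} (or Lemma~\ref{lem:kperf} when $K$ is infinite), forces $G(K)\not\subseteq\{F_j=0\}(\overline K)$, hence $\langle A\rangle\cdot e=G(K)\not\subseteq\{F_j=0\}(K)$; so Proposition~\ref{pr:escape} applied to the hypersurface $\{F_j=0\}\subseteq\mathbb A^{n^2}$ (dimension $\le n^2-1$, degree $>1$ since $\iota\ge1$) yields $g_j\in A^k$ with $F_j(g_j)\neq0$ and $k\le 2(1+\iota)^{n^2}$. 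Set $Y_j=\overline{Y_{j-1}g_jV}$; after $\ell-1\le\delta-d$ rungs one reaches $\dim Y_{\ell-1}=\delta$, i.e.\ $Y_{\ell-1}=G$, giving $\ell\le\delta-d+1$. Since $\sum_{j=1}^{\ell-1}(\dim Y_j-\dim Y_{j-1})=\delta-d$, the numbers $\dim Y_{j-1}+d-\dim Y_j$ are non-negative, at most $d-1$, and sum to $\ell d-\delta$; these will be the $\dim F_j$ of \eqref{eq:hodor}.

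\emph{Descending the staircase.} With $S_1=A^m\cap V(\overline K)$ I bound $|S_1|^\ell$ by iterating over $j=\ell-1,\dots,1$. Writing $\Phi_j(v_1,\dots,v_{j+1})=v_1g_1\cdots g_jv_{j+1}$ and $T_j=\Phi_j(S_1^{\times(j+1)})\subseteq A^{(j+1)m+jk}$, at rung $j$ one applies Lemma~\ref{le:oberred} to the ``last multiplication'' map $\mu_j\colon Y_{j-1}\times V\to Y_j$, $\mu_j(w,v)=wg_jv$ (dominant, maximum degree $2$, generic fibre dimension $\dim Y_{j-1}+d-\dim Y_j$), with the finite set $T_{j-1}\times S_1$: one gets a generic $\bar z_j$ and a proper $Z_j\subsetneq Y_j$ so that the non-exceptional part of $T_{j-1}\times S_1$ is at most $|T_j|$ times $|(T_{j-1}\times S_1)\cap\mu_j^{-1}(\bar z_j)|$, and this last quantity is, via the isomorphism $\mu_j^{-1}(\bar z_j)\cong V\cap g_j^{-1}Y_{j-1}^{-1}\bar z_j=:F_j$ (using that inversion on $G$ is a morphism), at most $|S_1\cap F_j(\overline K)|$. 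Here $F_j\subsetneq V$, $\dim F_j=\dim Y_{j-1}+d-\dim Y_j$, and $\deg F_j$ is bounded via the degree estimates of \S\ref{sss:degree} applied to $\Phi_j$ (domain of degree $D^{j+1}$, maximum degree $j+1$), giving $\deg F_j\le 2^\delta\ell^{\delta-1}D^{j+1}$. Chaining these for $j=\ell-1,\dots,1$ multiplies the image counts up to $|A^{\ell m+(\ell-1)k}|$ and the fibre counts into $\prod_{j=1}^{\ell-1}|A^m\cap F_j(\overline K)|$; each exceptional piece $\mu_j^{-1}(Z_j)$ (and the analogous remainders) pulls back to a proper subvariety of a power of $V$ not containing $V^{\times\ell}$, so Lemma~\ref{le:lioflov} collapses all of them together into $\ell\,|A^m\cap E(\overline K)|\,|A^m\cap V(\overline K)|^{\ell-1}$, with $\deg E\le(2\ell)^\delta D^\ell$ by the degree bounds of \S\ref{sss:degree}. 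This gives the inequality of the proposition together with \eqref{eq:hodor}.

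\emph{Classical groups, and the main difficulty.} For the almost simple untwisted groups $\mathrm{SL}_{r+1},\mathrm{Sp}_{2r}$ one has $\iota=1$, so each $\{F_j=0\}$ has degree $\le2$; replacing Corollary~\ref{co:langweil} by Corollary~\ref{co:lwchev} in the escape step forces only $|K|\ge 5(r+1)^2\cdot 2=10(r+1)^2$. For $\mathrm{SO}_{2r}^{+},\mathrm{SO}_{2r+1}$, which are not almost simple, one invokes instead the orthogonal case of Lemma~\ref{le:skewness} with $Y_{j-1}$ and $V$ in the roles of ``$V$'' and ``$V'$'': the hypotheses $0<|G(K)\cap Y_{j-1}(\overline K)|<|G(K)|/4$ hold because $v_0g_1v_0\cdots g_{j-1}v_0\in G(K)\cap Y_{j-1}(\overline K)$ for $v_0\in A^m\cap V(\overline K)$ and because $\deg Y_{j-1}\le(D\delta)^{\delta-1}$ with $|K|\ge5(D\delta)^{\delta-1}>r+4\deg Y_{j-1}$ (Corollary~\ref{co:lw4}), while $|G(K)\cap V(\overline K)|>4$ may be assumed, the opposite case $|A^m\cap V(\overline K)|\le4$ being checked directly. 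The hard part throughout is the bookkeeping of the descent: a naive peeling produces at rung $j$ not one subvariety $F_j$ but a family of them parametrised by the coordinates already fixed, which is why Lemma~\ref{le:oberred} must be reapplied at each rung to isolate a single generic fibre; one then has to verify that the points feeding the next rung stay generic in $Y_{j-1}$ and — most delicately — that the degree growth is only a factor of order $D$ times a polynomial in $\ell$ per rung, so that the final constants are those of \eqref{eq:hodor} and not something doubly exponential in $\ell$.
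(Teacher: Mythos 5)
Your overall architecture is the same as the paper's: the staircase $Y_j=\overline{Y_{j-1}g_jV}$ via Lemma~\ref{le:skewness} plus escape (Proposition~\ref{pr:escape}), the rung-by-rung use of Lemma~\ref{le:oberred} with the fibre of the two-factor multiplication map projected injectively into $V$ to produce the $F_j$, and Lemma~\ref{le:lioflov} to collapse the exceptional part into a single $E$; the handling of the classical cases also matches. But there is a genuine gap at exactly the point you yourself flag as delicate: the bound $\deg(E)\leq(2\ell)^{\delta}D^{\ell}$. You propose to pull each exceptional locus $Z_j\subsetneq Y_j$ back ``to a proper subvariety of a power of $V$'' and invoke ``the degree bounds of \S 2''. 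The only applicable general tool there is Lemma~\ref{le:invimdeg-new}, and applying it to the preimage of $Z_j$ under the length-$(j+1)$ product map restricted to $V^{\times\ell}$ costs a factor $\deg(V^{\times\ell})=D^{\ell}$ \emph{times} $\mathrm{mdeg}^{\dim(\text{image})}=(j+1)^{O(d\ell)}$, i.e.\ something of the shape $\ell^{O(\delta^2)}D^{2\ell}$, far above $(2\ell)^{\delta}D^{\ell}$; since Lemma~\ref{le:lioflov} only gives $\deg(E)\leq\deg$ of that pullback, your route does not prove the stated bound (and the loss would propagate into the constants of Theorem~\ref{th:ober}). The missing idea in the paper's proof is to first replace each $Z_j$ by a \emph{hypersurface} $Z_j'$ of no larger degree not containing $Y_{j+1}$ (Lemma~\ref{le:inhypers}) and then pull back through the multiplication map extended to all of $(\mathrm{Mat}_n)^{\times\ell}$, where the hypersurface preimage bound (Lemma~\ref{le:invimdeg-silly}) costs only the factor $\mathrm{mdeg}=j+1$; summing over $j$ then gives $\deg(Z)<2^{\delta}\ell^{\delta}D^{\ell}$ and hence the stated $\deg(E)$.

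A secondary, fixable issue is the bookkeeping of the error terms in your descent. If you extract the exceptional piece rung by rung inside the recursion $|T_{j-1}||S_1|\leq|T_j||S_1\cap F_j|+X_j$, the term $X_j$ gets multiplied by $\prod_{i<j}|S_1\cap F_i|\,|S_1|^{\ell-1-j}$ in the final expansion, and converting $X_j$ (a count of image pairs) into a count controlled by Lemma~\ref{le:lioflov} then produces extra factors of $|S_1|$, so the error does not come out in the stated form $\ell\,|A^m\cap E(\overline K)|\,|A^m\cap V(\overline K)|^{\ell-1}$ without further argument. The paper avoids this by making a single global split $S_1^{\times\ell}=(S_1^{\times\ell}\cap U)\cup(S_1^{\times\ell}\cap Z)$ at the level of $\ell$-tuples, with $Z$ the union of all pulled-back (hypersurface) exceptional loci, and only then chaining Lemma~\ref{le:oberred} along the good set $U$ (whose images automatically avoid each $Z_j$); you should reorganize your descent in that way.
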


As always, $\mathrm{Mat}_n\simeq\mathbb{A}^{n^2}$ is the space of $n\times n$ matrices. Recall that, much like in Theorem~\ref{th:main}, $\langle A\rangle=G(K)$ means that $A$ generates $G(K)$ as a semigroup; we may write the same statement with $A$ generating $G(K)$ as a group, but we have to add the condition $A=A^{-1}$. If $K$ is finite then the distinction is unnecessary.

\begin{proof}
Fix an integer $\ell$ and elements $g_{1},\ldots,g_{\ell-1}\in G(\overline{K})$ (to be chosen soon), and define
\begin{align*}
V_{1} & =V, & V_{j+1} & =\overline{V_{j}g_{j}V} \ \ \ \ \ (1\leq j<\ell).
\end{align*}
For each $1\leq j\leq\ell$, let $d_{j}=\dim(V_{j})$ and $D_{j}=\deg(V_{j})$. By Lemma~\ref{le:zarimdeg}, we have $D_{j}\leq D^{j}j^{d_j}$. We start by showing that the $g_{j}$ can be chosen so that they sit in a small power of $A$ and that we have $d_{j+1}>d_{j}$ at every step.

If $G$ is a connected almost simple linear algebraic group, by Lemma~\ref{le:skewness} we get $d_{j+1}>d_{j}$ for any choice of $g_{j}$ outside a variety $W_{j}$ with $\dim(W_{j})<n^{2}$, $\deg(W_{j})\leq 1+\min\{\iota,d_{j}\}$ and $G\not\subseteq W_{j}$. Then we know by Lemma~\ref{lem:kperf}, Corollary~\ref{co:langweil}, and our condition on $|K|$ that $G(K)\not\subseteq W_{j}(\overline{K})$. Thus, since $\langle A\rangle = G(K)$, we may apply Proposition~\ref{pr:escape} (escape from subvarieties), and obtain 
\begin{align}\label{eq:stepsescape}
g_{j} & \in A^{k_{j}}, & k_{j} & \leq 2 (1+\min\{\iota,d_{j}\})^{n^{2}},
\end{align}
such that $d_{j+1}>d_{j}$. Choose such an element $g_{j}$ for each $j$, and let $\ell$ be the least index such that $V_{\ell}=G$. Clearly, $\ell\leq\delta-d+1$, and $g_{1},\ldots,g_{\ell-1}\in A^{k}$ with $k\leq 2 (1+\iota)^{n^{2}}$.

If $G$ is an untwisted classical group, we have $\iota=1$. In the cases $G=\mathrm{SL}_{r+1},\mathrm{Sp}_{2r}$, we argue as we did above but replacing Corollary~\ref{co:langweil} with Corollary~\ref{co:lwchev}: in fact, its hypothesis is satisfied because $\iota=1$ implies that the polynomial defining $W_{j}$ has degree $\leq 2$. In the cases $G=\mathrm{SO}_{2r}^{+},\mathrm{SO}_{2r+1}$, since we stop at some $\ell\leq\delta$ we have $D_{j}\leq(D\delta)^{\delta-1}$ at every step. Then the condition on $|K|$, the fact that $\delta\geq r$, and Corollary~\ref{co:lw4} imply that the hypothesis $|G(K)\cap V_{j}(\overline{K})|<|G(K)|/4$ is satisfied. On the other hand, if $|G(K)\cap V(\overline{K})|\leq 4$, then the result is trivially true by $|A|,\ell\geq 2$, whereas if $|G(K)\cap V(\overline{K})|>4$ then we obtain also $|G(K)\cap V_{j}(\overline{K})|>0$, so that all the hypotheses of Lemma~\ref{le:skewness} are satisfied. Hence, we argue as above using Lemma~\ref{le:skewness} and Corollary~\ref{co:lwchev}.

For every $G$ that we consider, we have proved that $d_{j+1}>d_{j}$ for all $j$ and for $g_{j}$ satisfying \eqref{eq:stepsescape}. Now, define the morphisms
\begin{align*}
f_{i} & :V_{i}\times V\rightarrow V_{i+1}, & & f_{i}(x,v)=xg_{i}v, \\
f_{i,i+1} & :V_{i}\times V^{\times(\ell-i)}\rightarrow V_{i+1}\times V^{\times(\ell-i-1)}, & & f_{i,i+1}=(f_{i},\mathrm{Id}_{V^{\times(\ell-i-1)}}),
\end{align*}
so that we have a chain of maps
\begin{equation}\label{eq:jcss}
   V_1\times V^{\times (\ell-1)}
   \xrightarrow{f_{1,2}} V_2\times V^{\times (\ell-2)} \xrightarrow{f_{2,3}} \dotsb
      \xrightarrow{  }
   V_{\ell-1}\times V \xrightarrow{f_{\ell-1,\ell}} V_\ell = G\end{equation}
whose composition is just the map from $V^{\times \ell} = V_1\times V^{\times (\ell-1)}$
to $G$ given by
$$(v_1,v_2,\dotsc,v_\ell) \mapsto v_1 g_1 v_2 \dotsb g_{\ell-1} v_\ell.$$ 
In general, for $1\leq i < j\leq \ell$, we can define 
$${f}_{i,j}={f}_{j-1,j}\circ{f}_{j-2,j-1}\circ\ldots\circ{f}_{i,i+1}.$$

Our plan is to apply Lemma~\ref{le:oberred} to
$f_{i,i+1}$ for each value $i=1,2,\dotsc,\ell-1$ in succession so as to bound
$|A^m\cap V(\overline{K})|^\ell$ in terms of fibres $f_{i,i+1}^{-1}(y)$ (which
will be projected injectively to copies of $V$) and the image
$f_{1,\ell-1}((A^m\cap V(\overline{K}))^{\times \ell})\subseteq A^{\ell m + (\ell-1) k}$.
We will of course have to avoid exceptional loci and keep track of various degrees
and dimensions.

(Notice it would not do to simply apply Lemma~\ref{le:oberred} once to
$f = f_{1,\ell-1}$: we would have difficulty ``crumbling'' $f^{-1}(y)$ down into
subvarieties of $V$ of the right dimensions.)

Since $V$ is irreducible, the
varieties $V_i\times V^{\times (\ell-i)}$ in \eqref{eq:jcss} are all
  irreducible. (Recall we defined each $V_i$ as a closure.)
Write $m_{i}=im+(i-1)k$ and $S_{i}=A^{m_{i}}\cap V_{i}(\overline{K})$, observing that by definition $f_{i}(S_{i}\times S_{1})\subseteq S_{i+1}$.
Obviously $\mathrm{mdeg}(f_{j})=2$.

Apply Lemma~\ref{le:oberred} with $f=f_{i}$, $X = V_i\times V$ and
$S = S_{i}\times S_{1}$. We can take the subvariety $Z_{i}\subsetneq V_{i+1}$ thus obtained and pull it
back to $V^{\times\ell}$.
It is actually more efficient to do a pull-back to all of affine space, in the following sense. We know that $V$ (and $V_i$) lives inside $G\subseteq \GL_n
\subseteq \mathrm{Mat}_n = \mathbb{A}^{s}$ for $s=n^2$. We define
\[\tilde{f}_{1,j}:(\mathbb{A}^{s})^{\times \ell} \to 
\mathbb{A}^s \times (\mathbb{A}^{s})^{\times (\ell-j)}
\]
by matrix multiplication:
\[\tilde{f}_{1,j}(x_1,\dotsc,x_\ell) = 
(x_1 g_1 x_2 \dotsb g_{j-1} x_j,x_{j+1},x_{j+2},\dotsc,x_\ell).\]
Clearly, $\tilde{f}_{1,j}|_{V^{\times \ell}}$ is just $f_{1,j}$, and
$\mdeg (\tilde{f}_{1,j}) = \mdeg (f_{1,j}) = j$. 
By Lemma \ref{le:inhypers}, there is a hypersurface $Z_i'$ containing $Z_i$ but not $V_{i+1}$ with $\deg (Z_i')\leq \deg (Z_i)$.
Putting together the pullbacks of all $Z_i'$, we get:
\begin{equation}\label{eq:break-silly}
(A^{m}\cap V(\overline{K}))^{\times\ell}=S_{1}^{\times\ell}=(S_{1}^{\times\ell}\cap U(\overline{K}))\cup(S_{1}^{\times\ell}\cap Z(\overline{K}))
\end{equation}
with
\begin{align*}
Z & =\bigcup_{i=1}^{\ell-1} \tilde{f}_{1,i+1}^{-1}(Z_{i}'\times 
(\mathbb{A}^s)^{\times (\ell-i-1)}), & U & =V^{\times\ell}\setminus 
(Z\cap V^{\times \ell}). 
\end{align*}

We start by estimating $|S_{1}^{\times\ell}\cap Z(\overline{K})|$.
 By Lemma~\ref{le:invimdeg-silly},
$$\begin{aligned}
  \deg(\tilde{f}_{1,i+1}^{-1}(Z_{i}'\times (\mathbb{A}^s)^{\times(\ell-i-1)})) &\leq
(i+1) \deg(Z_i).\end{aligned}$$
By Lemma \ref{le:oberred}, $\deg(Z_i)\leq 2^{d_{i+1}-1} D D_i$. Hence
\begin{equation}\begin{aligned}
  \deg(Z) & 
  \leq\sum_{i=1}^{\ell-1} \ell 2^{d_{i+1}-1} D D_{i}\leq
  \sum_{i=1}^{\ell-1} \ell 2^{d_{i+1}-1} D^{i+1} i^{d_i} <
  2^{\delta}\ell^{\delta}D^{\ell}. \label{eq:bounddegz}
\end{aligned}\end{equation}
Hence, by Lemma \ref{le:lioflov},
\begin{equation}\label{eq:edone}|S_{1}^{\times\ell}\cap Z(\overline{K})| \leq\ell
|A^{m}\cap E(\overline{K})| |S_1|^{\ell-1} 
\end{equation}
where $E$ (which does not contain $V$) has degree $\leq \deg(Z) < 2^{\delta} \ell^{\delta} D^{\ell}$.

Now we estimate $|S_{1}^{\times\ell}\cap U(\overline{K})|$.
By Lemma \ref{le:oberred} and the definition of $U$, 
$$ |S_{1}^{\times\ell}\cap U(\overline{K})|
\leq |f_{1,2}(S_1^{\times \ell})| |S_1^{\times \ell}\cap f_{1,2}^{-1}(y_1)|$$
for some $y_1\in f_{1,2}(S_1^{\times \ell})$ with $\dim (f_{1,2}^{-1}(y_1)) =
\dim(V^{\times \ell})- \dim(V_2\times V^{\times (\ell-2)}) = 2 d - d_2$.
Clearly
$$f_{1,2}(S_1^{\times \ell})\subseteq (S_2\times S_1^{\times (\ell-2)})
\cap f_{1,2}(U(\overline{K})).$$
Hence, again by Lemma \ref{le:oberred} and the definition of $U$,
$$\begin{aligned}|f_{1,2}(S_1^{\times \ell})|
  &\leq |f_{2,3}(f_{1,2}(S_1^{\times \ell}))|\cdot
|(S_2\times S_1^{\times (\ell-2)})\cap f_{2,3}^{-1}(y_2)|\\
&\leq |(S_3\times S_1^{\times (\ell-3)})
\cap f_{1,3}(U(\overline{K}))|\cdot
|(S_2\times S_1^{\times (\ell-2)})\cap f_{2,3}^{-1}(y_2)|\end{aligned}$$
for some $y_2\in f_{2,3}(S_2\times S_1^{\times (\ell-2)})$ with $\dim (f_{2,3}^{-1}(y_2)) =
d_2 + d - d_3$. We iterate, and get
$$|S_{1}^{\times\ell}\cap U(\overline{K})|\leq
|S_\ell|\cdot
\prod_{i=1}^{\ell-1}
|(S_i\times S_1^{\times (\ell-i)})\cap f_{i,i+1}^{-1}(y_i)|,$$
where 
$y_i\in (V_{i+1}\times V^{\times (\ell-i-1)})(\overline{K})$
is such that $\dim (f_{i,i+1}^{-1}(y_i)) = d_i + d - d_{i+1}$.

Now, for given $y_i$, if $f_{i,i+1}(x,v_{i+1},v_{i+2},\dotsc,v_{\ell}) =
(x g_i v_{i+1},v_{i+2},\dotsc,v_{\ell})$ equals $y_i$, then $v_{i+2},\dotsc,v_{\ell}$
are determined, and, if we know $v_{i+1}$, $x$ is determined as well (since $g_i$
is fixed). Hence, the projection $\pi_2:f_{i,i+1}^{-1}(y_i)\to V$
to the second coordinate is injective. We define $F_i = \overline{\pi_2(f_{i,i+1}^{-1}(y_i))}$, and conclude that 
\begin{align}\label{eq:udone}
  |S_{1}^{\times\ell}\cap U(\overline{K})|\leq |S_\ell| \prod_{i=1}^{\ell-1}
  |S_1\cap F_i(\overline{K})|
\end{align}
and $\dim(F_i) = \dim (f_{i,i+1}^{-1}(y_i)) = d_i + d - d_{i+1}$.
Moreover,
by Lemmas~\ref{le:zarimdeg}--\ref{le:invimdeg-new},

\begin{align*}
\deg(F_i) & \leq \deg(f_{i,i+1}^{-1}(y_i)) = \deg(f_{i}^{-1}(\pi_1(y_i))) \\
 & \leq \deg(V_i) \deg(V) 2^{\dim(V_{i+1})} \leq
D^{i+1} i^{d_i} 2^{d_{i+1}}
< 2^\delta \ell^{\delta-1} D^{i+1}.
\end{align*}

We apply \eqref{eq:edone} and \eqref{eq:udone} inside \eqref{eq:break-silly} and
are done. 
\end{proof}

\subsection{Main theorem}\label{subs:mainth}

It is time to prove the main theorem. We provide a slightly more general version below, and deduce Theorem~\ref{th:main} at the end of the section. 

\begin{theorem}\label{th:ober}
Let $G\leq\mathrm{GL}_{n}$ be a connected almost simple linear algebraic group of rank $r$ with $\dim(G)=\delta$, $\deg(G)=\Delta$ and $\mathrm{mdeg}(^{-1})=\iota$, defined over a field $K$ with $|K|>r+(\iota+1)\Delta$. Let $A\subseteq G(K)$ be a finite set with $e\in A$ and $\langle A\rangle=G(K)$. Let $V$ be any subvariety of $G$. Write $d=\dim(V)$ and $D=\deg(V)$.

Then, for any positive integer $m$, we have 
\begin{equation}\label{eq:desiderio}
|A^{m}\cap V(\overline{K})|\leq C_{1}|A^{C_{2}}|^{\frac{\dim(V)}{\dim(G)}},
\end{equation}
where 
\begin{align}\label{eq:boundc1c2}
C_{1}=C_{1}(d,D) & \leq(2\delta D)^{\delta^{d}}, & C_{2}=C_{2}(m) & \leq\delta(m+2 (1+\iota)^{n^{2}}).
\end{align}

If $G$ is an untwisted classical group, the same conclusion holds without any assumption on $|K|$.
\end{theorem}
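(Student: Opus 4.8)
The plan is to prove \eqref{eq:desiderio} by induction on $d=\dim(V)$, keeping $C_2=C_2(m):=\delta\bigl(m+2(1+\iota)^{n^2}\bigr)$ fixed at every level of the recursion and aiming for $C_1(d,D)\le(2\delta D)^{\delta^d}$. Two situations can be dealt with at once. If $V$ is reducible, one applies the irreducible case to each irreducible component $V_i$ (with $d_i=\dim V_i\le d$, $D_i=\deg V_i$, $\sum_iD_i=D$) and sums, using $|A^{C_2}|\ge1$, $\delta^{d_i}\le\delta^d$, $2\delta D_i\ge1$ and the superadditivity $\sum_it_i^{p}\le(\sum_it_i)^{p}$ for $p=\delta^d\ge1$; this gives $\sum_i(2\delta D_i)^{\delta^{d_i}}\le(2\delta D)^{\delta^d}$, so one may assume $V$ irreducible. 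If $V=G$ — which, since $G$ is connected hence irreducible, is forced as soon as $d=\delta$ — then $|A^m\cap V(\overline K)|=|A^m|\le|A^{C_2}|$ because $e\in A$ and $C_2\ge m$, and we are done. The base case $d=0$ is immediate from $|A^m\cap V(\overline K)|\le|V(\overline K)|=D\le2\delta D$. Hence from now on $V$ is irreducible with $0<d<\delta$.

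\emph{Main step.} One applies Proposition~\ref{pr:oberstair} with the given $m$, producing $\ell\le\delta-d+1$, $k\le2(1+\iota)^{n^2}$, proper subvarieties $F_1,\dots,F_{\ell-1}\subsetneq V$ and a variety $E$ not containing $V$, together with the displayed inequality and the bounds \eqref{eq:hodor}. The image term satisfies $\ell m+(\ell-1)k\le\delta m+(\delta-1)\cdot2(1+\iota)^{n^2}\le C_2$, so $|A^{\ell m+(\ell-1)k}|\le|A^{C_2}|$ since $e\in A$. Each $F_j$ is a subvariety of $G$ of dimension $\le d-1$ (in the notation of the proof of Proposition~\ref{pr:oberstair}, $\dim F_j=d_j+d-d_{j+1}\le d-1$, as $d_{j+1}>d_j$), and $A^m\cap E(\overline K)=A^m\cap(V\cap E)(\overline K)$ with $V\cap E$ a proper subvariety of $V$, hence of dimension $\le d-1$; after replacing $E$ by a hypersurface of no larger degree containing it but not $V$ (Lemma~\ref{le:inhypers}) and invoking Bézout, $\deg(V\cap E)\le D\deg(E)$. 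One feeds the inductive hypothesis into all these lower-dimensional varieties; the dimension identity $\sum_{j=1}^{\ell-1}\dim F_j=\ell d-\delta$ makes the exponents of $|A^{C_2}|$ add up ($1+\sum_j\dim F_j/\delta=\ell d/\delta$), so with $X=|A^m\cap V(\overline K)|$ and $Y=|A^{C_2}|^{d/\delta}\ge1$ one arrives at an inequality of the form $X^\ell\le P\,Y^\ell+Q\,Y\,X^{\ell-1}$, where $P=\prod_{j=1}^{\ell-1}C_1(\dim F_j,\deg F_j)$ and $Q=\ell\,C_1(\dim(V\cap E),\deg(V\cap E))$. A two-case argument — either $QYX^{\ell-1}\ge\tfrac12X^\ell$, whence $X\le2QY$, or not, whence $X^\ell\le2PY^\ell$ and $X\le(2P)^{1/\ell}Y$ — yields $X\le\max\{2Q,(2P)^{1/\ell}\}\,Y$.

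\emph{Closing the induction.} What remains, and what I expect to be the main technical obstacle, is to check that the degree bounds of \eqref{eq:hodor} propagate through the recursion so as to keep $C_1(d,D)$ within $(2\delta D)^{\delta^d}$, i.e.\ that $2Q\le(2\delta D)^{\delta^d}$ and $2P\le(2\delta D)^{\ell\delta^d}$. Inserting $C_1(\dim F_j,\deg F_j)\le(2\delta\deg F_j)^{\delta^{\dim F_j}}$, $\deg F_j\le2^\delta\ell^{\delta-1}D^{j+1}$, and the analogous estimates for $V\cap E$ via $\deg(V\cap E)\le D(2\ell)^\delta D^\ell$, the verification has to play off the two constraints $\ell\le\delta-d+1$ and $\sum_j\dim F_j=\ell d-\delta$ against one another: the dimension identity forces $\sum_j\delta^{\dim F_j}\le(\ell-1)\delta^{d-1}\le(\delta-d)\delta^{d-1}$, which is precisely what absorbs the degree blow-up $D\mapsto D^{j+1}$ (its worst instance $j+1=\ell$ being tied, through $\ell\le\delta-d+1$, to $\dim F_j$ being forced small), while the power $X^\ell$ on the left pays for the product of $\ell-1$ factors on the right. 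The $E$-term carries the largest degree and is the delicate spot; this is where the hypersurface reduction above and the slack in $\ell\le\delta-d+1$ are used. Balancing the three ways $\ell$ enters (as the number of factors in $P$, as a degree exponent, and as the power on $X$) is the heart of the computation.

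\emph{Untwisted classical groups, and Theorem~\ref{th:main}.} For $G$ untwisted classical one has $\iota=1$, and the same induction goes through using the untwisted-classical variant of Proposition~\ref{pr:oberstair}; its hypotheses on $|K|$ ($|K|\ge10(r+1)^2$ for $\mathrm{SL}_{r+1},\mathrm{Sp}_{2r}$, and $|K|\ge5(D\delta)^{\delta-1}$ for $\mathrm{SO}_{2r}^{+},\mathrm{SO}_{2r+1}$) hold unless $|K|$ is finite and below the relevant threshold, in which case one instead uses the crude bound $|A^m\cap V(\overline K)|\le|V(\overline K)\cap\mathrm{Mat}_n(K)|\le D|K|^d$ from Proposition~\ref{pr:lwnotq} and checks directly, using $\delta\ge3$ and the explicit embeddings of Definition~\ref{de:untwisted-classical}, that $D|K|^d\le(2\delta D)^{\delta^d}\le C_1|A^{C_2}|^{d/\delta}$; hence no condition on $|K|$ is needed. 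Finally, Theorem~\ref{th:main} is the case $m=1$ of Theorem~\ref{th:ober}: one bounds $C_2=\delta\bigl(1+2(1+\iota)^{n^2}\bigr)$ by $n^{n^2+3}$ in general, using $G\le\mathrm{SL}_n$ (so that inversion on $G\subseteq\mathrm{Mat}_n$ is the adjugate and $\iota\le n-1$) together with $\delta\le n^2$, and by $n^22^{n^2+1}$ for the untwisted classical groups, where $\iota=1$.
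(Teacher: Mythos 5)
Your setup — induction on $d$, reduction to $V$ irreducible, applying Proposition~\ref{pr:oberstair}, absorbing the image term into $|A^{C_2}|$, and arriving at $X^\ell\le P\,Y^\ell+Q\,Y\,X^{\ell-1}$ — matches the paper's argument. The gap is in the step you deferred: your dichotomy ``either $QYX^{\ell-1}\ge\tfrac12X^\ell$, whence $X\le 2QY$'' forces the requirement $2Q=2\ell\,C_1(d',D')\le(2\delta D)^{\delta^d}$, and this inequality is simply false in general. Take $d=1$: then $d'=0$, $C_1(0,D')=D'$ with $D'\le(2\ell)^{\delta}D^{\ell+1}$, and $\ell$ can be as large as $\delta-d+1=\delta$, so $2Q$ can be of order $D^{\delta+1}$, while the target $(2\delta D)^{\delta^d}=(2\delta D)^{\delta}$ only carries $D^{\delta}$; for $D$ large the recursion cannot close. (A similar loss in the multiplicative constants occurs for $d=2$.) The slack $\ell\le\delta-d+1$ and the hypersurface reduction, which you invoke at the ``delicate spot'', do not help here: the degree blow-up $D\mapsto D^{\ell+1}$ of the exceptional locus is precisely worst when $d$ is small and $\ell$ is large.

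The missing idea is that, in the case where the $E$-term dominates, one must \emph{not} conclude $X\le 2QY$ (which throws away the strict dimension drop $d'\le d-1$ by bounding $|A^{C_2}|^{d'/\delta}\le Y$). Instead one keeps the exponent $d'/\delta$ and interpolates the bound $X<2\ell\,C_1(d',D')|A^{C_2}|^{d'/\delta}$ with the trivial bound $X\le|A^m|\le|A^{C_2}|$, raising the first to the power $(\delta-d)/(\delta+1-d)$ and the second to the complementary power; since $d'\le d-1$, the $|A^{C_2}|$-exponent comes out exactly $d/\delta$, and the constant is only $\bigl(2\ell\,C_1(d-1,(2\ell)^{\delta}D^{\ell+1})\bigr)^{(\delta-d)/(\delta+1-d)}$ — this is \eqref{eq:exitexc}. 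That fractional exponent is exactly what tames $D^{\ell+1}$: it is why the choice $C_1(d,D)=(2\delta)^{\delta^d}D^{\delta(\delta-1)\cdots(\delta-d+1)}\le(2\delta D)^{\delta^d}$ (note the falling-factorial $D$-exponent, not $\delta^d$) satisfies the recursion, after which the fibre-side condition \eqref{eq:exitfibre} (your $P$-term) follows as the weaker of the two. A smaller point: your identity $A^m\cap E(\overline K)=A^m\cap(V\cap E)(\overline K)$ is not literally true from the statement of Proposition~\ref{pr:oberstair} (elements of $A^m$ on $E$ need not lie on $V$); what one actually uses is that the $E$-term arises from $|(A^m\cap V(\overline K))\cap E(\overline K)|$, so induction may be applied to $V\cap E$, of dimension $\le d-1$ and degree $\le D\deg(E)$. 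The remaining parts of your proposal (reducible $V$, base cases, the untwisted classical case via Proposition~\ref{pr:lwnotq} when $|K|$ is small, and the deduction of Theorem~\ref{th:main} with $m=1$) are in line with the paper.
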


{\em Remarks.} (i) Again, we would need $A=A^{-1}$ if we assumed $A$ generating $G(K)$ only as a group (no extra condition is necessary if $K$ is finite).
(ii) We will in fact prove the stronger bound $C_1 \leq (2\delta)^{\delta^{d}} D^{\delta (\delta-1) \dotsb (\delta-d+1)}$ for $G$ connected almost simple.
(iii) ``Untwisted classical group'' has the same meaning as in \S\ref{se:chev}: namely, we exclude the usual embedding $G=\mathrm{SL}_{n}\leq\mathrm{GL}_{n}$, which is however covered by the ``connected almost simple'' case (and also by Remark~\ref{re:slnusual}).

\begin{proof}
We proceed by induction on $d$. If $d=0$, then $V(\overline{K})$ is a set of $D$ points, and therefore we have $|A^{m}\cap V(\overline{K})|\leq D$. Furthermore, if $d=\delta$ then $V=G$ and so $|A^{m}\cap V(\overline{K})|=|A^{m}|$.

Now, let us fix $0<d<\delta$, and assume that for all subvarieties of dimension $<d$ the bound holds for functions $C_{1},C_{2}$ as in \eqref{eq:boundc1c2}. It is enough to consider $V$ irreducible since our $C_{1}$ will be more than linear in $D$. Apply Proposition~\ref{pr:oberstair}. Thus, we have integers $\ell\leq\delta-d+1\leq\delta$ and $k\leq 2 (1+\iota)^{n^{2}}$ and subvarieties $F_{1},\ldots,F_{\ell-1}\subsetneq V$, $E\subsetneq \mathrm{Mat}_n$ satisfying its conclusion. Write \begin{align*}
\dim(E\cap V) & =d'\leq d-1, & \deg(E\cap V) & =D'\leq (2 \ell)^{\delta} D^{\ell+1}, \\
\dim(F_{i}) & =d_{i}\leq d-1, & \deg(F_{i}) & =D_{i}\leq 2^\delta \ell^{\delta-1} D^{i+1},
\end{align*}
Using the inductive hypothesis, we have the bound
\begin{align}
|A^{m}\cap V(\overline{K})|^{\ell}\leq \ & |A^{\ell m+(\ell-1)k}|\cdot\prod_{i=1}^{\ell-1}C_{1}(d_{i},D_{i})|A^{C_{2}(m)}|^{\frac{d_{i}}{\delta}} \nonumber \\
\ & +\ell C_{1}(d',D')|A^{C_{2}(m)}|^{\frac{d'}{\delta}}|A^{m}\cap V(\overline{K})|^{\ell-1}. \label{eq:boundind}
\end{align}

Assume for now that $|A^m\cap V(\overline{K})|\geq 2 \ell C_1(d',D') |A^{C_2(m)}|^{d'/\delta}$.
Then the second summand on the right side of \eqref{eq:boundind} is 
$\leq \frac{1}{2} |A^m\cap V(\overline{K})|^\ell$.
So, by $\sum_{i=1}^{\ell-1}d_{i}=\ell d-\delta$, 
\begin{equation*}
|A^{m}\cap V(\overline{K})|^{\ell}\leq 2\prod_{i=1}^{\ell-1}C_{1}(d_{i},D_{i})\cdot|A^{\max\{\ell m+(\ell-1)k,C_{2}(m)\}}|^{1+\frac{\ell d-\delta}{\delta}}.
\end{equation*}
Hence, taking the $\ell$-th root on both sides, we obtain the main result provided that
\begin{align}\label{eq:exitfibre}
C_{1}(d,D) & \geq \left(2 \prod_{i=1}^{\ell-1} C_1(d-1,2^\delta \ell^{\delta-1}
D^{i+1})\right)^{1/\ell},
& C_{2}(m) & \geq\ell m+(\ell-1)k
\end{align}
for $1\leq d\leq \delta$,
assuming as well that $C_1(d,D)$ is increasing on $d$ for $0\leq d<\delta$.

Assume now that $|A^m\cap V(\overline{K})|< 2 \ell C_1(d',D') |A^{C_2(m)}|^{d'/\delta}$. Putting this bound together with the trivial bound $|A^m\cap V(\overline{K})|\leq |A^m|$, we see that
$$|A^m\cap V(\overline{K})|< (2 \ell C_1(d',D') |A^{C_2(m)}|^{d'/\delta})^r
|A^m|^{1-r}$$
for any $0\leq r\leq 1$. Recall $d'\leq d-1$.
We let $r = (\delta-d)/(\delta+1-d)$, and get
$$|A^m\cap V(\overline{K})|< (2 \ell C_1(d',D'))^r
|A^{C_2(m)}|^{d/\delta}$$
provided that $C_2(m)\geq m$. Thus, we are fine if
\begin{align}\label{eq:exitexc}
C_1(d,D) &\geq (2 \ell C_1(d-1,(2\ell)^\delta D^{\ell+1}))^{\frac{\delta-d}{\delta+1-d}}, & C_2(m) &\geq m.
\end{align}

It remains to choose $C_{1},C_{2}$ so that they satisfy \eqref{eq:exitfibre} and \eqref{eq:exitexc}. We take
$$C_2(m) = \delta (m+k) \leq \delta (m + 2 (1 + \iota)^{n^2}).$$
Define $C_1(0,D) = D$, and, for $d\geq 1$,
$$C_1(d,D) = (2\delta)^{\delta^d} D^{\delta (\delta-1) \dotsb (\delta-d+1)}.$$
To verify \eqref{eq:exitexc}, we note, first, that
\[C_1(1,D) = (2\delta D)^\delta >
\left((2 \delta)^{\delta+1} D^{\delta+1}\right)^{\frac{\delta-1}{\delta}}
\geq (2 \ell C_1(0,(2\ell)^\delta D^{\ell+1}))^{\frac{\delta-1}{\delta}},
\]
whereas, for $d\geq 2$,
\[\begin{aligned} C_1(d-1,(2\ell)^\delta D^{\ell+1})
&\leq (2\delta)^{\delta^{d-1}} ((2\delta)^\delta D^{\delta-d+2})^{\delta (\delta-1) \dotsb (\delta-d+2)}\\
&\leq (2\delta)^{\delta^d + \delta^{d-1}} D^{\delta (\delta-1) \dotsb (\delta-d+2)\cdot (\delta-d+2)},
\end{aligned}\]
and so, since $(\delta-d+2) (\delta-d)/(\delta+1-d) < \delta+1-d$
and $(\delta^d + \delta^{d-1}+ 1) (\delta-d)/(\delta+1-d)
< (\delta^d + \delta^{d-1} + 1) (\delta-1)/\delta < \delta^d$ for $d\geq 2$,
$$C_1(d,D) \leq \left(2 \ell C_1(d-1,(2\ell)^\delta D^{\ell+1})\right)^{\frac{\delta-d}{\delta+1-d}}.$$
Thus, \eqref{eq:exitexc} holds for all $d\geq 1$. Hence, so does
\eqref{eq:exitfibre}, which is weaker.

Lastly, let us show that we do not need the assumption on $|K|$ if $G$ is an untwisted classical group as in \S \ref{se:chev}.
We already know from Proposition \ref{pr:oberstair}  that it is enough to assume
$|K|>5(D\delta)^{\delta-1}=\max\{5(D\delta)^{\delta-1}, 10 (r+1)^2\}$. Suppose $|K|\leq 5(D\delta)^{\delta-1}$. Then, by Proposition \ref{pr:lwnotq},
$$|A^m\cap V(\overline{K})| \leq |\mathrm{Mat}_n(K)\cap V(\overline{K})|
\leq D |K|^d \leq 5^{d}D(\delta D)^{(\delta-1)d}.
$$
We know that $\delta\geq 3$, so $2^{\delta^{d}}\geq 8^{d}$. Hence
\begin{equation*}
(2\delta D)^{\delta^d} \geq 8^{d}(\delta D)^{\delta^d} > 5^{d}D(\delta D)^{(\delta-1)d} \geq|A^m\cap V(\overline{K})|. \qedhere
\end{equation*}
\end{proof}

Theorem ~\ref{th:main} is just a somewhat simplified rephrasing of
Theorem~\ref{th:ober}.
\begin{proof}[Proof of Theorem~\ref{th:main}]
As mentioned in \S\ref{se:linalg}, since $G$ is almost simple we have $G\leq\mathrm{SL}_{n}$, so $\dim(G)\leq n^{2}-1$ and the inversion map has degree $\iota\leq n-1$.
Apply Theorem~\ref{th:ober} 
with $m=1$. Since
we can assume $n\geq 2$, 
\begin{equation*}
C_{2} \leq(n^{2}-1)(1+2 n^{n^{2}})\leq n^{n^2+3}.
\end{equation*}
If $G$ is one of the groups in \S \ref{se:chev} (with no restriction on $|K|$), then we can again apply Theorem~\ref{th:ober} as before with the additional advantage that the degree $\iota$ of the inversion map is $1$. So, Theorem~\ref{th:main} holds with
\begin{equation*}
C_{2} \leq(n^{2}-1) (1+2\cdot 2^{n^{2}}) \leq n^2 2^{n^2+1}. \qedhere
\end{equation*}
\end{proof}

\begin{remark}\label{re:slnusual}
We can reduce the case of $G$ equal to the algebraic group given by $\det(x)=1$ inside $\mathrm{Mat}_{n}$ (i.e., the more natural definition of $\mathrm{SL}_{n}$)
to the case of $\SL_n$ as defined in \S \ref{se:chev}. There is a trade-off, in that
$C_2$ will be smaller and $C_1$ will be larger than if we applied Theorem \ref{th:main} directly (as we may indeed do).
Call $\varphi$ the rational map defined by
\begin{equation*}
\varphi(x)=\begin{pmatrix} x & 0 \\ 0 & (x^{-1})^{\top} \end{pmatrix},
\end{equation*}
which is an isomorphism from $G$ to the variety $\mathrm{SL}_{n}$ defined as in \S\ref{se:chev}: then, \eqref{eq:maintheq} holds for $A,V,G$ if and only if it holds for $\varphi(A),\varphi(V)\cap\mathrm{SL}_{n},\mathrm{SL}_{n}$. On the other hand, $\varphi(V)\cap\mathrm{SL}_{n}\subseteq\mathrm{Mat}_{2n}$ can be defined as a variety as follows:
\begin{equation*}
\varphi(V)\cap\mathrm{SL}_{n}=\left\{\begin{pmatrix} x_{11} & x_{12} \\ x_{21} & x_{22} \end{pmatrix}\in\mathrm{Mat}_{2n}:x_{11}\in V,x_{12}=x_{21}=0,x_{11}x_{22}^{\top}=\mathrm{Id}_{n}\right\}.
\end{equation*}
Then, by Bézout, $\deg(\varphi(V)\cap\mathrm{SL}_{n})\leq 2^{n^{2}}\deg(V)$.
Thus, by Theorem~\ref{th:ober} for $|K|>r+(\iota+1)\deg(G)=(n-1)+n^{2}$, \eqref{eq:desiderio} holds with
\begin{align*}
C_1 & \leq (2 \delta\cdot 2^{n^2} \deg(V))^{\delta^d} \leq (2^{n^2+1} n^2 \deg(V))^{(n^{2}-1)^d}, \\
C_2 & \leq (n^2-1) (m + 2^{n^2+1}),
\end{align*}
and \eqref{eq:maintheq} holds for the bounds above with $m=1$. 
If $|K|\leq(n-1)+ n^2$ we may use Proposition~\ref{pr:lwnotq} and obtain
\begin{align*}
|A^m\cap V(\overline{K})|\leq \deg(V)|K|^d<(2n^2)^{d}\deg(V),
\end{align*}
thus obtaining $C_{1}$ as above and respectively $C_{2}=m$ and $C_{2}=1$ for \eqref{eq:desiderio} and \eqref{eq:maintheq}.
\end{remark}

\section{Diameter bounds}\label{se:diambounds}

In this section, we show how to bound the diameter of untwisted classical groups $G$ over $\mathbb{F}_{q}$ using the estimate in Theorem~\ref{th:main}. The procedure is the same as in \cite{BDH21}; our aim is to present the strengths and limitations of the approach that goes through a dimensional estimate valid for general varieties.

For the whole section, the untwisted classical groups $G$ except $\mathrm{SL}_{n}$ are defined as in \S\ref{se:chev}, and the restrictions to the rank and characteristic apply here as well. The group $G=\mathrm{SL}_{n}$ is instead defined with the usual embedding $G\subseteq\mathrm{Mat}_{n}$, rather than as in \S\ref{se:chev}: clearly $\mathrm{diam}(G)$ is not affected by the choice of embedding, and we may use the estimate of Remark~\ref{re:slnusual} to replace Theorem~\ref{th:main} for $\mathrm{SL}_{n}$, so we can choose to adopt the usual embedding everywhere. We shall do so to avoid some minor technical unpleasantness related to the definition of semisimple elements.

Essentially the same arguments apply more generally to almost simple linear algebraic groups. We will be working with untwisted classical groups in part for simplicity, and in part because these diameter bounds are just an application of our main result -- an application that is not as strong as our result \eqref{eq:amago} in \cite{BDH21}.

Since we work over finite fields, the group and the semigroup generated by $A$ are the same object. Thus, all results from the previous sections hold regardless of our convention for $\langle A\rangle$. We do add the condition $A=A^{-1}$ in Theorem~\ref{th:growth} for a different reason, namely because we sometimes need to conjugate by elements of $A$.

\subsection{Preliminaries}
We begin by stating several results on conjugacy classes, regular semisimple elements, and maximal tori. 
Most of them have quantitatively stronger counterparts in \cite{BDH21}, but we provide here shorter proofs that do not qualitatively affect our conclusions.

\begin{lemma}\label{le:hth}
Let $G$ be an untwisted classical group of rank $r$ with $\dim(G)=\delta$,
defined over $\mathbb{F}_{q}$,
and let $T$ be a maximal torus in $G$. Then, the number of distinct conjugates of $T$ by elements of $G(\mathbb{F}_{q})$ is
\begin{equation*}
|\{hTh^{-1}:h\in G(\mathbb{F}_{q})\}|\geq
\frac{q^{\delta-r} (q-1)^r}{r! 2^r (q+1)^r} \geq 
\frac{q^{\delta-r}}{(6 r)^r}
.
\end{equation*}
\end{lemma}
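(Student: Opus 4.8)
The plan is to count conjugates of $T$ via the orbit-stabilizer theorem: the number of distinct conjugates $h T h^{-1}$ for $h\in G(\mathbb{F}_q)$ equals $|G(\mathbb{F}_q)|/|N(\mathbb{F}_q)|$, where $N = N_G(T)$ is the normalizer of $T$ in $G$. So the whole task reduces to (a) a good lower bound on $|G(\mathbb{F}_q)|$, which we already have from Proposition~\ref{pr:lwlowg}, namely $|G(\mathbb{F}_q)|\geq q^{\delta-r}(q-1)^r$, and (b) a good upper bound on $|N(\mathbb{F}_q)|$. For (b), recall that $N/T$ is (a subgroup of) the Weyl group $W$, so $|N(\mathbb{F}_q)| \leq |W| \cdot |T(\mathbb{F}_q)|$. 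The Weyl group of any rank-$r$ untwisted classical group has order at most $2^r r!$ (it is $S_{r+1}$ of order $(r+1)!$ for type $A_r$, and a subgroup of the signed permutation group of order $2^r r!$ for types $B_r$, $C_r$, $D_r$; one checks $(r+1)!\leq 2^r r!$ for $r\geq 1$). For $|T(\mathbb{F}_q)|$ we use the argument already given inside the proof of Proposition~\ref{pr:lwlowg}: $|T(\mathbb{F}_q)| = \prod_i (q-\lambda_i)$ where the $\lambda_i$ are roots of unity, hence $|T(\mathbb{F}_q)|\leq (q+1)^r$. Combining, $|N(\mathbb{F}_q)|\leq 2^r r! (q+1)^r$, and dividing gives the first displayed inequality
\[
|\{hTh^{-1}:h\in G(\mathbb{F}_q)\}| \;\geq\; \frac{q^{\delta-r}(q-1)^r}{r!\,2^r\,(q+1)^r}.
\]

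For the second inequality, it suffices to show $r!\,2^r\,(q+1)^r/(q-1)^r \leq (6r)^r$, i.e. that $\left(\frac{2(q+1)}{q-1}\right)^r r! \leq (6r)^r$. Since $q\geq 2$ we have $q+1\leq 3(q-1)$ when $q\geq 2$... actually one must be slightly careful for small $q$, but note that $q$ is odd for the orthogonal groups and in general $q\geq 2$; if $q=2$ then $(q+1)/(q-1)=3$, and for $q\geq 3$ we have $(q+1)/(q-1)\leq 2$. In the worst case $2(q+1)/(q-1)\leq 6$, so $\left(\frac{2(q+1)}{q-1}\right)^r r! \leq 6^r r!$, and it remains to observe $r!\leq r^r$, giving $6^r r^r = (6r)^r$. (One could tighten this, but the stated bound only needs this crude step.)

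The main obstacle — really the only non-bookkeeping point — is justifying the upper bound $|T(\mathbb{F}_q)|\leq (q+1)^r$ and the index bound $|N/T|\leq 2^r r!$ cleanly, together with making sure the quantifier over $h$ is handled correctly: distinct conjugates correspond to cosets of $N_G(T)(\mathbb{F}_q)$ acting on the left, and one must check that $N_G(T)$ is indeed defined over $\mathbb{F}_q$ (which holds since $T$ is, as normalizers of $\mathbb{F}_q$-subgroups are $\mathbb{F}_q$-subgroups) so that the orbit-stabilizer count over $\mathbb{F}_q$-points is valid. I would cite the standard structure theory (e.g. \cite{Bor91} for $N_G(T)/Z_G(T)\cong W$ and $Z_G(T)=T$ in the reductive case) for these facts, and then the computation is immediate. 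The rest is the elementary inequalities above, which I would not belabor.
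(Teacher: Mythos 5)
Your proposal is correct and follows essentially the same route as the paper: orbit--stabilizer, the lower bound $|G(\mathbb{F}_q)|\geq q^{\delta-r}(q-1)^r$ from Proposition~\ref{pr:lwlowg}, the torus bound $|T(\mathbb{F}_q)|\leq (q+1)^r$ extracted from that same proof, the Weyl-group bound $r!\,2^r$, and then elementary estimates for the final inequality. The only cosmetic difference is that the paper bounds $|N(T)(\mathbb{F}_q)|/|T(\mathbb{F}_q)|$ by decomposing $N(T)$ into cosets $n_iT$ as varieties (each having $0$ or $|T(\mathbb{F}_q)|$ rational points), whereas you invoke the injection of $N(\mathbb{F}_q)/T(\mathbb{F}_q)$ into the abstract Weyl group; both also pass over the $\mathbb{F}_q$-rationality of $T$ at the same level of detail.
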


\begin{proof}
Up to conjugation by some element of $G(\overline{\mathbb{F}_{q}})$, $T$ is defined over $\mathbb{F}_{q}$, and thus so are the normalizer $N(T)$ of $T$ in $G$ and the {\em Weyl group} $\mathcal{W}(G)\simeq N(T)/T$ (see \cite[\S 11.19]{Bor91} for the Weyl group, and \cite[\S 6.3]{Bor91} more generally for quotients of varieties). We have $|\{hTh^{-1}:h\in G(\mathbb{F}_{q})\}|=|G(\mathbb{F}_{q})|/|N(T)(\mathbb{F}_{q})|$ by definition, $|G(\mathbb{F}_{q})|\geq q^{\delta-r}(q-1)^{r}$ by Proposition~\ref{pr:lwlowg}, and $|T(\mathbb{F}_{q})|\leq(q+1)^{r}$ by the proof of that same result. Thus, it remains to bound $|N(T)(\mathbb{F}_{q})|/|T(\mathbb{F}_{q})|$.

For untwisted classical groups, $\mathcal{W}(G)$ is a finite group of size at most $r!2^{r}$ (see for instance \cite[\S 2.8.4]{Wil09}), which means that $N(T)$ is the disjoint union of at most $r!2^{r}$ varieties of the form $n_{i}T$. Any two such varieties that actually have $\mathbb{F}_{q}$-points must have the same number of $\mathbb{F}_{q}$-points: in fact, for any two $\mathbb{F}_{q}$-points $x,y$ lying on each one of them, the multiplication maps by $x^{-1}y$ and by $y^{-1}x$ are morphisms defined over $\mathbb{F}_{q}$. Therefore, each $(n_{i}T)(\mathbb{F}_{q})$ either is empty or has size $|T(\mathbb{F}_{q})|$, and the bound for $\mathcal{W}(G)$ applies to $|N(T)(\mathbb{F}_{q})|/|T(\mathbb{F}_{q})|$ as well. 
\end{proof}

We must ensure that there are not too many elements that are not {\em regular semisimple}. Regular semisimple elements are dense in $G$ over $\overline{\mathbb{F}_{q}}$, but we need results over $\mathbb{F}_{q}$. We start by showing that there is at least one such element, and then use Proposition~\ref{pr:escape}.

\begin{proposition}\label{pr:existrs}
Let $G$ be an untwisted classical group over $\mathbb{F}_{q}$. Then there exists a regular semisimple element in $G(\mathbb{F}_{q})$.
\end{proposition}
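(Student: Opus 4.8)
The plan is to exhibit an explicit regular semisimple element in each of the four families $\SL_{r+1}$, $\SO_{2r}^{+}$, $\SO_{2r+1}$, $\Sp_{2r}$ over $\F_q$, using the concrete matrix realizations of Definition~\ref{de:untwisted-classical}. Recall that an element $g$ of a reductive group is regular semisimple if it is semisimple and its centralizer has dimension exactly $r = \rank(G)$; equivalently, $g$ lies in a unique maximal torus, which is the connected centralizer. For a classical matrix group, semisimplicity means $g$ is diagonalizable over $\overline{\F_q}$, and regularity can be checked by showing the eigenvalues of $g$ (acting on the natural module, together with the constraints imposed by the bilinear/quadratic form) are distinct enough that the centralizer is as small as possible. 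So the real content is: produce, for each type, a single $\F_q$-rational element whose characteristic polynomial on the natural representation is separable and of the right shape.

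First I would handle $\SL_{r+1}$. Here the cleanest approach is to take a companion-type matrix (a single $(r+1)$-cycle twisted appropriately, or a block with an irreducible-over-$\F_q$ factor) realizing a maximal torus of $\SL_{r+1}$ that is $\F_q$-rational; concretely one can use the image of a generator of $\F_{q^{r+1}}^{\times}$ acting by multiplication on $\F_{q^{r+1}}\cong \F_q^{r+1}$, rescaled to have determinant $1$. Such an element has irreducible (hence separable) characteristic polynomial, so it is regular semisimple, and it is visibly defined over $\F_q$; one then conjugates into the embedding of \S\ref{se:chev} (block-diagonal with the contragredient), which preserves regular semisimplicity since it is an isomorphism of algebraic groups defined over $\F_q$. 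For $\Sp_{2r}$, $\SO_{2r}^{+}$, $\SO_{2r+1}$, I would use the standard description of maximal tori: with the forms $M_1, M_2, M_3$ as given, there is a split maximal torus consisting of matrices $\diag{t_1,\dotsc,t_r,t_1^{-1},\dotsc,t_r^{-1}}$ (and an extra $1$ in the odd orthogonal case). Choosing $t_1,\dotsc,t_r \in \F_q^{\times}$ so that the $2r$ (resp.\ $2r+1$) values $t_1,\dotsc,t_r,t_1^{-1},\dotsc,t_r^{-1}$ (resp.\ and $1$) are pairwise distinct gives a separable characteristic polynomial and hence regular semisimplicity; such a choice of distinct $t_i$ with distinct inverses exists as soon as $|\F_q^{\times}| = q-1$ is large enough, and for the remaining small $q$ one can either use a non-split torus (eigenvalues in $\F_{q^2}$) or check the finitely many cases by hand.

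The main obstacle is the genuinely small fields: when $q$ is very small (e.g.\ $q \in \{2,3,4,5\}$), there may not be enough elements in $\F_q^{\times}$ to make all the $t_i, t_i^{-1}$ distinct in the split torus, so the "obvious" diagonal element fails to be regular. The fix is to pass to anisotropic or partially split tori: group the $2r$ coordinates into blocks on which the torus acts through $\F_{q^2}^{\times}$ or $\F_{q^m}^{\times}$ (norm-one or norm-compatible subtori, depending on type), forcing the eigenvalues to be Galois conjugates that are automatically distinct. This requires knowing which tori are $\F_q$-rational in each classical group — standard facts found in, e.g., \cite[Ch.~3]{Wil09} or \cite[\S 3.3]{Car93} — and a small amount of care with the orthogonal types because of the restriction $p \neq 2$ (which is already imposed) and because $\SO$ is not connected reductive in the bad characteristic (irrelevant here). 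In practice I would state the separable-characteristic-polynomial criterion, give the split construction for $q$ large, and dispatch the finitely many small $q$ by exhibiting an explicit element (e.g.\ from an Euler torus / Singer cycle inside the classical group), remarking that the rank and characteristic restrictions in Definition~\ref{de:untwisted-classical} already exclude the truly degenerate groups.

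Alternatively — and this may be the slicker writeup — one can avoid case analysis on $q$ entirely: the set of regular semisimple elements is a nonempty open subvariety $G^{\mathrm{rs}} \subseteq G$ defined over $\F_q$ (its complement is the vanishing locus of the appropriate discriminant/subdiscriminant of the characteristic polynomial), and one can count. Using Proposition~\ref{pr:lwnotq} for an upper bound on the non-regular-semisimple locus $G \setminus G^{\mathrm{rs}}$ (a proper subvariety of $G$, of dimension $\leq \delta - 1$ and degree bounded in terms of $n$ only) together with the lower bound $|G(\F_q)| \geq q^{\delta-r}(q-1)^r$ from Proposition~\ref{pr:lwlowg}, one gets $|G^{\mathrm{rs}}(\F_q)| > 0$ once $q$ exceeds a constant depending only on $n$; the remaining small $q$ are finite in number and handled by the explicit construction above. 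I expect the clean final version to combine the two: explicit element for concreteness where it is easy, and the counting argument to cover all remaining $q$ uniformly.
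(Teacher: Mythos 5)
Your route is genuinely different from the paper's. The paper disposes of this proposition by citation: it invokes \cite[Prop.~7.1.4, Rem.~7.1.5]{DOR10} (existence of regular semisimple elements for reductive groups over $\mathbb{F}_q$, with the $q=2$ exceptions checked separately), and, as an alternative covering all cases, Lehrer's result that a simply connected semisimple group has an odd number of Frobenius-stable regular semisimple conjugacy classes together with Steinberg's theorem that each such class contains an $\mathbb{F}_q$-point. You instead propose a self-contained argument: explicit elements with separable characteristic polynomial (split diagonal tori for large $q$, Singer-type/norm-one elements otherwise), optionally combined with the paper's own counting machinery (Proposition~\ref{pr:lwnotq}, Proposition~\ref{pr:lwlowg}, and in effect Corollary~\ref{co:lwchev}) applied to the complement of the regular semisimple locus, which is cut out on $G$ by the discriminant of the characteristic polynomial. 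The strategy is sound: separability of the characteristic polynomial is indeed a sufficient criterion, there is no circularity (density of regular semisimple elements over $\overline{\mathbb{F}_q}$ is what makes the discriminant locus proper, and the paper states this independently), and your norm-one Singer element in $\mathrm{SL}_{r+1}$, as well as the order-$(q^r+1)$ torus element in $\mathrm{Sp}_{2r}$, do have distinct eigenvalues for every $q$. What your approach buys is explicitness and independence from outside references; what the paper's buys is brevity and uniformity in $q$.

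Two caveats if you were to write this up. First, ``dispatch the finitely many small $q$ by hand'' is not accurate: the counting argument must use the Cayley-trick bound of Corollary~\ref{co:lwchev} (not Corollary~\ref{co:langweil}, since $\deg(G)$ is exponential in $n^2$), and even then the threshold is of order $r^4$, so for each rank there are polynomially many excluded $q$ and infinitely many such pairs $(r,q)$ overall; the non-split-torus construction therefore has to be carried out uniformly for all $q$, type by type, and cannot be replaced by a finite check. Second, the orthogonal cases need real care: $\mathrm{SO}_{2r}^{+}$ has no anisotropic Coxeter torus of order $q^r+1$ (that torus lives in the minus form), so you must use, e.g., a torus of order $(q^{r-1}+1)(q+1)$ inside the specific form $M_1$ of Definition~\ref{de:untwisted-classical} and verify eigenvalue distinctness there. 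These are standard but nontrivial amounts of work, which is presumably why the paper opted for citation.
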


\begin{proof}
By \cite[Prop.~7.1.4, Rem.~7.1.5]{DOR10}, this fact is true even for every $G$ reductive, with some exceptions in the case $q=2$ that can be verified independently.\footnote{We thank user Jeff Adler on MathOverflow for this reference, and for pointing out that the case $G_{2}(\mathbb{F}_{2})$ was also inadvertently omitted (question 452013).} A more indirect approach that covers all cases is the following: by \cite[Cor.~3.5]{Leh92} every simply connected semisimple $G$ has an odd (and thus positive) number of rational regular semisimple conjugacy classes, i.e.\ classes made of regular semisimple elements and fixed (setwise) by the Frobenius automorphism, and by \cite[Thm.~1.7]{Ste65} each such class has an element over $\mathbb{F}_{q}$.
\end{proof}

\begin{proposition}\label{pr:nonrs}
Let $G$ be an untwisted classical group of rank $r$ over $\mathbb{F}_{q}$, and let
\begin{equation*}
\mathfrak{B}=\{g\in G(\mathbb{F}_{q}):g\text{ not regular semisimple}\}.
\end{equation*}
Then $\mathfrak{B}=(G\cap W)(\mathbb{F}_{q})$, where $W$ is a variety of degree $\leq 6r^{2}$ with $G\cap W\subsetneq G$. For any set $A\subseteq G(\mathbb{F}_{q})$ with $\langle A\rangle=G(\mathbb{F}_{q})$, there exists some $g\in A^{k}\setminus(A^{k}\cap\mathfrak{B})$ with $k=(2r)^{O(r^{2})}$.
\end{proposition}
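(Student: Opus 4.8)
The plan is to reduce the statement to two ingredients: (i) producing the variety $W$ out of an explicit ``discriminant'' polynomial, and (ii) invoking escape from subvarieties.

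For (i) I would construct, separately for each of the four families of Definition~\ref{de:untwisted-classical}, a single nonzero polynomial $F$ on $\mathrm{Mat}_N$ such that, for $g\in G(\overline{\mathbb{F}_q})$, one has $g$ regular semisimple $\iff F(g)\neq 0$. Granting this, $W:=\{F=0\}$ satisfies $\mathfrak{B}=(G\cap W)(\mathbb{F}_q)$ by construction, $G\cap W\subsetneq G$ because the regular semisimple locus is dense in $G(\overline{\mathbb{F}_q})$, and $\deg(W)\leq\deg(F)$. For $G=\mathrm{SL}_{r+1}$ (in the usual embedding used in this section) and $G=\mathrm{Sp}_{2r}$, regular semisimplicity of $g$ is equivalent to separability of the characteristic polynomial $\chi_g$: by the root-datum criterion (``$\alpha(g)\neq 1$ for every root $\alpha$'') this amounts to the $N$ eigenvalues being pairwise distinct. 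So one takes $F=\mathrm{disc}(\chi_g)$; since the coefficient of $t^{N-i}$ in $\chi_g$ is a polynomial of degree $i$ in the entries of $g$, and $\mathrm{disc}$ is isobaric of weight $N(N-1)$ when that coefficient is given weight $i$, we get $\deg(F)\leq N(N-1)$, i.e.\ $\leq r(r+1)$ for $\mathrm{SL}_{r+1}$ and $\leq 2r(2r-1)$ for $\mathrm{Sp}_{2r}$, both well below $6r^2$.

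The orthogonal cases are where the real work lies, and I expect them to be the main obstacle. For $\mathrm{SO}_{2r}^+$ and $\mathrm{SO}_{2r+1}$, separability of $\chi_g$ is \emph{strictly} stronger than regular semisimplicity --- an eigenvalue $\pm 1$ of multiplicity $2$ is compatible with $C_G(g)^\circ$ being a maximal torus --- so the palindromic polynomial $\chi_g$ must first be folded down to degree $r$. Writing the nontrivial eigenvalues of a semisimple $g$ as $t_1,t_1^{-1},\dots,t_r,t_r^{-1}$ and setting $\rho_i=t_i+t_i^{-1}$, the elementary symmetric functions $\sigma_k$ of $\rho_1,\dots,\rho_r$ are, via Newton's identities and the relation $g^{-1}=M^{-1}g^{\top}M$ valid on $G$ (with $M$ the relevant form matrix $M_1,M_2,M_3$, so that $\mathrm{tr}(g^{-j})$ may be replaced by the genuine degree-$j$ polynomial $\mathrm{tr}((M^{-1}g^{\top}M)^j)$), genuine polynomials of degree $\leq k$ in the entries of $g$; put $Q_g(s)=\prod_i(s-\rho_i)=s^r-\sigma_1 s^{r-1}+\dots$. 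From the root-datum criterion one checks that $g$ is regular semisimple iff $\mathrm{disc}(Q_g)\neq 0$ in type $D_r$, and iff $\mathrm{disc}(Q_g)\,Q_g(2)\neq 0$ in type $B_r$ (where $Q_g(2)=0$ detects the root $e_i$, i.e.\ $t_i=1$). The one delicate point is the implication ``$F(g)\neq 0\Rightarrow g$ semisimple'': under the stated nonvanishing conditions the $\rho_i$ are distinct, so the only generalized eigenspaces of $g$ of dimension $>1$ are among $V_{+1},V_{-1}$, they are even-dimensional, hence of dimension exactly $2$, and an element of $\mathrm{O}_2$ with characteristic polynomial $(t\mp 1)^2$ must equal $\pm\mathrm{Id}$ because $\mathrm{O}_2$ has no nontrivial unipotents in characteristic $\neq 2$ --- exactly the reason $p\neq 2$ is imposed on the orthogonal groups. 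Here $\deg(F)\leq r(r-1)$ for $\mathrm{SO}_{2r}^+$ and $\deg(F)\leq r(r-1)+r=r^2$ for $\mathrm{SO}_{2r+1}$, again below $6r^2$.

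Finally, for (ii): by Proposition~\ref{pr:existrs} there is a regular semisimple element of $G(\mathbb{F}_q)$, which lies outside $W$, so $G(\mathbb{F}_q)\not\subseteq W(\overline{\mathbb{F}_q})$. We may assume $e\in A$ (replacing $A$ by $A\cup\{e\}$ if necessary, which changes neither $\langle A\rangle$ nor the order of magnitude of $k$). Since $\langle A\rangle=G(\mathbb{F}_q)$, Proposition~\ref{pr:escape} applied with the variety $W$ and the point $x=e$ produces $g\in A^k$ with $g=g\cdot e\notin W(\overline{\mathbb{F}_q})$, i.e.\ $g\notin\mathfrak{B}$, where $k\leq 2\deg(W)^{\dim(W)+1}\leq 2(6r^2)^{N^2}$. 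As $N\leq 2r+1$, this gives $k\leq 2(6r^2)^{(2r+1)^2}=(2r)^{O(r^2)}$, as required.
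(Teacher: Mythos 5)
Your proposal is correct and has the same skeleton as the paper's proof: build an explicit discriminant-type hypersurface $W$, get an $\mathbb{F}_q$-point of $G$ off $W$ from Proposition~\ref{pr:existrs}, and reach it with Proposition~\ref{pr:escape}, giving $k\leq 2\deg(W)^{O(r^2)}=(2r)^{O(r^2)}$. Where you differ is in the choice of $W$: the paper takes $W=\{\mathrm{disc}(\chi_g)=0\}$ uniformly for all four families, of degree $n(n-1)\leq 6r^2$, asserting ``regular semisimple $\iff$ $\mathrm{disc}(\chi_g)\neq 0$'', while you observe (correctly) that for $\mathrm{SO}_{2r}^{+}$ and $\mathrm{SO}_{2r+1}$ this equivalence fails in one direction, since a regular semisimple element may have eigenvalue $\pm 1$ with multiplicity $2$, and you repair it by folding the palindromic characteristic polynomial to $Q_g$ and taking $\mathrm{disc}(Q_g)$ (times $Q_g(2)$ in type $B_r$). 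Your version buys the literal equality $\mathfrak{B}=(G\cap W)(\mathbb{F}_q)$ claimed in the statement; the paper's simpler choice only yields the containment $\mathfrak{B}\subseteq(G\cap W)(\mathbb{F}_q)$ for the orthogonal groups, which, together with properness, is in fact all that is used downstream (in \eqref{eq:de} and Theorem~\ref{th:growth} one only needs the non-regular-semisimple elements to be trapped in a proper subvariety of controlled degree, and the elements produced outside $W$ are then automatically regular semisimple). Your case analysis of the implication ``$F(g)\neq 0\Rightarrow g$ semisimple'' via the $\mathrm{O}_2$ restriction to the generalized eigenspaces $V_{\pm 1}$ is sound (in type $B_r$ the space $V_{+1}$ is $1$-dimensional rather than even-dimensional once $Q_g(2)\neq 0$, but this only simplifies matters). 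One technical point to fix: Newton's identities require dividing by integers up to $r$, so in characteristic $p\leq r$ they do not by themselves express the $\sigma_k(\rho)$ through power sums; instead, compare coefficients in the identity $\chi_g(t)=t^{r}Q_g(t+t^{-1})$ (resp.\ $(t-1)t^{r}Q_g(t+t^{-1})$), which is a unitriangular relation over $\mathbb{Z}$ between the coefficients of $\chi_g$ and those of $Q_g$ and gives the same bound $\deg\leq k$ for the coefficient of $s^{r-k}$, hence $\deg(F)\leq r^2$ as you claim. (Also, as in the paper, the escape step needs $e\in A$; your padding $A\cup\{e\}$ produces $g$ that is a product of at most $k$ elements of $A$, which is what the $O$-notation in the statement is meant to absorb.)
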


\begin{proof}
An element $g\in G(\mathbb{F}_{q})\subseteq\mathrm{SL}_{n}(\mathbb{F}_{q})$ is regular semisimple if and only if the discriminant of its characteristic polynomial is nonzero; recall that in the case $G=\mathrm{SL}_{n}$ we use the conventional way to represent $G$ as embedded in $\mathrm{Mat}_{n}$, so that we have $n\leq 2r+1$ for all $G$. Call this discriminant $P$. Then, $P$ is of degree $n(n-1)$ in the entries of $g$, so the variety $W=\{P=0\}$ is such that $\mathfrak{B}=(G\cap W)(\mathbb{F}_{q})$ and $\deg(W)\leq\deg(P)=n(n-1)\leq 2r(2r+1)\leq 6r^{2}$.

By Proposition~\ref{pr:existrs}, every $G$ contains a regular semisimple element over $\mathbb{F}_q$, so $\mathfrak{B}\subsetneq G(\mathbb{F}_{q})$ and thus $G\cap W\subsetneq G$. Therefore we can use Proposition~\ref{pr:escape} on $W$, and the number of steps in which we find a regular semisimple $g$ is bounded by $n^{2}\deg(W)^{n^{2}}=(2r)^{O(r^{2})}$.
\end{proof}

For $G$ defined over $K$ and $g\in G(\overline{K})$, we define the {\em conjugacy class} $\mathrm{Cl}(g)$ to be the image of $G$ through the map $x\mapsto xgx^{-1}$.

\begin{proposition}\label{pr:vardeg}
Let $G\leq\mathrm{GL}_{n}$ be an untwisted classical group of rank $r$ with $\dim(G)=\delta$ and $\deg(G)=\Delta$, defined over $\mathbb{F}_{q}$.
\begin{enumerate}[(i)]
\item\label{pr:vardeg-cl} For any regular semisimple $g\in G(\mathbb{F}_{q})$, $\mathrm{Cl}(g)$ is a variety over $\mathbb{F}_{q}$ with $\dim(\mathrm{Cl}(g))=\delta-r$ and $\deg(\mathrm{Cl}(g))\leq n!\Delta\leq 2^{14r^{2}}$. 
\item\label{pr:vardeg-to} For any maximal torus $T$, $\deg(T)\leq 2^{r}$.
\end{enumerate}
\end{proposition}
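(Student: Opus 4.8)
The plan is to treat the two parts of the proposition separately.

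\emph{Part (i).} We have defined $\mathrm{Cl}(g)$ as the image of the morphism $c_{g}\colon G\to\mathrm{Mat}_{n}$, $x\mapsto xgx^{-1}$; this morphism is defined over $\mathbb{F}_{q}$ because $g\in G(\mathbb{F}_{q})$, and since $g$ is semisimple its conjugacy class is Zariski closed (see \cite{Hum95b}), so $\mathrm{Cl}(g)$ is indeed a variety, defined over $\mathbb{F}_{q}$. For the dimension, every fibre of $c_{g}$ over a point of $\mathrm{Cl}(g)$ is a coset of the centralizer $C_{G}(g)$, whose dimension is $r$ because $g$ is regular semisimple (so $C_{G}(g)^{\circ}$ is a maximal torus); as all fibres have the same dimension, Theorem~\ref{th:chev2} gives $\dim(\mathrm{Cl}(g))=\delta-r$.

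\emph{Part (i), degree.} The key observation is that the characteristic polynomial $\chi_{g}$ has distinct roots — this is the criterion for regular semisimplicity used in Proposition~\ref{pr:nonrs} — so the $\mathrm{GL}_{n}$-conjugacy class $\mathcal{O}$ of $g$ is precisely the variety $\{h\in\mathrm{Mat}_{n}:\chi_{h}=\chi_{g}\}$, cut out set-theoretically by the $n$ equations ``the $k$-th coefficient of $\chi_{h}$ equals that of $\chi_{g}$'', of degrees $1,2,\dotsc,n$; Bézout then gives $\deg(\mathcal{O})\leq n!$. Now $\mathrm{Cl}(g)\subseteq G\cap\mathcal{O}$, and every point $h$ of $G\cap\mathcal{O}$ is again regular semisimple in $G$ (its characteristic polynomial has distinct roots), so its $G$-conjugacy class is closed of dimension $\delta-r$; moreover only finitely many such classes occur, since each meets a fixed maximal torus $T_{0}$ and $T_{0}$ has only finitely many elements with characteristic polynomial $\chi_{g}$. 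Hence $G\cap\mathcal{O}$ is a finite union of closed $G$-orbits of dimension $\delta-r$, so it is pure of dimension $\delta-r$, and $\mathrm{Cl}(g)$ — irreducible, closed, of that dimension — is one of its irreducible components; therefore $\deg(\mathrm{Cl}(g))\leq\deg(G\cap\mathcal{O})\leq\deg(G)\deg(\mathcal{O})\leq\Delta\,n!$ by \eqref{eq:bezout} and the definition of degree. Finally $n\leq 2r+1$, and $\Delta=\deg(G)$ is bounded by Bézout applied to the $\leq\binom{n+1}{2}$ quadrics (and, except for $\Sp_{2r}$, the equation $\det=1$) defining $G$ in Definition~\ref{de:untwisted-classical}; a routine estimate then yields $n!\,\Delta\leq 2^{14r^{2}}$.

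\emph{Part (ii).} All maximal tori of $G$ are conjugate over $\overline{\mathbb{F}_{q}}$, and conjugation by a fixed matrix is a linear — hence degree-preserving — automorphism of $\mathrm{Mat}_{n}$, so I may take $T$ to be the standard diagonal maximal torus. A short computation with $M_{1},M_{2},M_{3}$ shows that $T$ consists of the diagonal matrices with entries $(s_{1},\dotsc,s_{r},s_{1}^{-1},\dotsc,s_{r}^{-1})$, together with a final entry equal to $1$ in the case $\SO_{2r+1}$; for $\SL_{r+1}$ it is $\{\mathrm{diag}(s_{1},\dotsc,s_{r+1}):s_{1}\cdots s_{r+1}=1\}$. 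In every case $T$ sits inside the linear subspace of diagonal matrices (a copy of $\mathbb{A}^{n}$, of degree $1$), cut out there by the $r$ quadrics $s_{i}s_{i+r}=1$ (the remaining defining relations being linear), respectively by the single polynomial $s_{1}\cdots s_{r+1}-1$ of degree $r+1\leq 2^{r}$; by Bézout, $\deg(T)\leq 2^{r}$.

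\emph{Main obstacle.} The only delicate point is the step in part (i) that $\mathrm{Cl}(g)$ is an irreducible component of $G\cap\mathcal{O}$: this rests on the structural facts that semisimple conjugacy classes are closed and that only finitely many of them share a given characteristic polynomial, which together force $G\cap\mathcal{O}$ to be pure-dimensional of dimension $\delta-r$. The dimension count, the Bézout estimates, and the explicit diagonalization in part (ii) are all routine.
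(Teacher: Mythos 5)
Your argument is correct, and for the degree bound in part (i) it takes a genuinely different route from the paper. The paper invokes Freudenthal's theorem \cite{Fre52} (together with the ``divide by a root of the determinant'' trick for $\mathrm{SL}_n$) to show that $G(\overline{\mathbb{F}_q})$-conjugacy coincides with $\mathrm{GL}_n(\overline{\mathbb{F}_q})$-conjugacy for these elements, so that $\mathrm{Cl}(g)$ is \emph{exactly} the set of elements of $G$ with characteristic polynomial $\chi_g$, and then applies B\'ezout to the $n$ coefficient equations of degrees $1,\dotsc,n$ to get $\deg(\mathrm{Cl}(g))\leq n!\,\Delta$. You avoid Freudenthal entirely: you only use the containment $\mathrm{Cl}(g)\subseteq G\cap\mathcal{O}$ and then show that $G\cap\mathcal{O}$ is a finite union of closed $G$-classes, each of dimension $\delta-r$, so that it is pure-dimensional and $\mathrm{Cl}(g)$ is one of its irreducible components; since the paper's degree is additive over components, $\deg(\mathrm{Cl}(g))\leq\deg(G\cap\mathcal{O})\leq n!\,\Delta$ follows from the same B\'ezout count. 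Your route is more self-contained (no external conjugacy theorem), at the price of the purity/finiteness argument; the paper's route gives the stronger set-theoretic identification of $\mathrm{Cl}(g)$, which is not actually needed for the bound. Two small points you should make explicit: the finiteness of $\{t\in T_0:\chi_t=\chi_g\}$ is immediate once you take $T_0$ to be the diagonal torus you already describe in part (ii) (its entries must be roots of $\chi_g$), and the one-line justification that a matrix with distinct eigenvalues lying in $G$ has centralizer of dimension exactly $r$ (it contains a maximal torus of $G$ and sits inside the $n$-dimensional diagonalizing torus of $\mathrm{GL}_n$, so its identity component is a torus of $G$) deserves a sentence. Part (ii) and the dimension statement in part (i) are essentially the paper's own arguments, and leaving the final numerical estimate $n!\,\Delta\leq 2^{14r^2}$ as routine matches the paper's level of detail.
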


The constants in the statement above are far from tight. We only need qualitatively good bounds for Theorem~\ref{th:growth}.

\begin{proof}
Since $g$ is semisimple, $\mathrm{Cl}(g)$ is a variety \cite[\S 1.7]{Hum95a}, and since $g$ is regular the dimension of the centralizer $C(g)$ is $r$ \cite[\S 2.3]{Hum95a}, which implies $\dim(\mathrm{Cl}(g))=\delta-\dim(C(g))=\delta-r$ \cite[\S 1.5]{Hum95a}. Using \cite{Fre52} and dividing by an appropriate root of the determinant, we see that two elements $g,g'\in G(\mathbb{F}_{q})$ are conjugates via an element of $G(\overline{\mathbb{F}_{q}})$ if and only if they are conjugates via an element of $\mathrm{GL}_{n}(\overline{\mathbb{F}_{q}})$. In turn, $g,g'$ are conjugates in $\mathrm{GL}_{n}(\overline{\mathbb{F}_{q}})$ if and only if they have the same characteristic polynomial $p(t)$. The coefficient of $t^{i}$ in $p(t)$ has degree $n-i$ in the entries of $g$, so we obtain $\deg(\mathrm{Cl}(g))\leq n!\Delta$.
Finally, by the definitions in \S\ref{se:chev} (except that we adopt the usual embedding $\mathrm{SL}_{n}\leq\mathrm{GL}_{n}$) and by B\'ezout, we have $n\leq 2r+1$ and $\Delta\leq 2^{n^{2}}n$, proving \eqref{pr:vardeg-cl}.

All maximal tori are conjugate to each other over $\overline{\mathbb{F}_{q}}$ \cite[Cor.~11.3(1)]{Bor91}, so they all have the same degree; to prove \eqref{pr:vardeg-to}, it is then sufficient to pick a convenient maximal torus for each $G$. 
The diagonal torus in $\mathrm{SL}_{r+1}\leq\mathrm{GL}_{r+1}$ has degree $\leq r+1$.
The diagonal tori in $\mathrm{Sp}_{2r}$ and $\mathrm{SO}_{2r}^{+}$ consist of diagonal matrices with diagonal entries $x_1,x_2,\dotsc,x_r,x_1^{-1},\dotsc,x_r^{-1}$; they thus have degree $\leq 2^{r}$. The diagonal torus in $\mathrm{SO}_{2r+1}$ is obtained by adding to the previous one a final $1$ along the diagonal, and therefore has degree $\leq 2^{r}$.
\end{proof}

\subsection{Growth and diameter bounds}

Now we can use the degree bounds of Propositions~\ref{pr:nonrs}--\ref{pr:vardeg} and apply Theorem~\ref{th:main} (or rather, its more effective versions from \S\ref{se:dimest}) to the varieties that interest us. Let $g$ be regular semisimple, let $T$ be a maximal torus containing a regular semisimple element, and let $W$ be as in Proposition~\ref{pr:nonrs}, so that in particular $T\cap W\subsetneq T$ and $(T\cap W)(\mathbb{F}_{q})=T(\mathbb{F}_{q})\cap\mathfrak{B}=\{x\in T(\mathbb{F}_{q}):x\text{ not regular semisimple}\}$. We apply Theorem~\ref{th:ober} to the varieties $V=\mathrm{Cl}(g)$ and $V=T\cap W$ for $G\neq\mathrm{SL}_{n}$, and Remark~\ref{re:slnusual} for $G=\mathrm{SL}_{n}$. Hence, for any untwisted classical group $G$ of rank $r$ over $\mathbb{F}_{q}$, any generating set $A$ of $G(\mathbb{F}_q)$ with $e\in A$, and any $m\geq 1$,
\begin{align}\label{eq:de}
|A^m\cap\mathrm{Cl}(g)(\mathbb{F}_{q})| & \leq C_{1}|A^{C_{2}(m)}|^{1-\frac{r}{\dim(G)}}, & |A^m\cap T(\mathbb{F}_{q})\cap\mathfrak{B}| & \leq C_{1}'|A^{C_{2}(m)}|^{\frac{r-1}{\dim(G)}},
\end{align}
where, by Propositions \ref{pr:nonrs} and \ref{pr:vardeg}, we may choose
\begin{align*}
C_{1} & = (2^{n^{2}+1}\dim(G)\cdot\deg(\Cl(g)))^{\dim (G)^{\dim(\Cl(g))}} = 
(2^{O(r^2)})^{O(r^2)^{\dim (G) - r}} = 2^{2^{O(r^2 \log r)}}, \\
C_1' & = (2^{n^{2}+1}\dim(G)\cdot\deg(T\cap W))^{\dim (G)^{\dim(T\cap W)}} 
= (2^{O(r^{2})})^{O(r^2)^r} = 2^{2^{O(r \log r)}}, \\
C_{2}(m) & \leq \dim(G) (m + 2^{n^2+1}) = 2^{O(r^{2})} + O(r^2 m).
\end{align*}
As anticipated, we will obtain a product theorem of the form \eqref{eq:prodthm} from \eqref{eq:de}.

First -- a result by Nikolov and Pyber \cite{NP11} makes short work of the case of $A$ large.

\begin{proposition}\label{pr:np11}
Let $G$ be an untwisted classical group of rank $r$ with $\dim(G)=\delta$, defined over $\mathbb{F}_{q}$, and let $A\subseteq G(\mathbb{F}_{q})$. If $|A|\geq 3q^{\delta-\frac{r}{3}}$, then $A^{3}=G(\mathbb{F}_{q})$.
\end{proposition}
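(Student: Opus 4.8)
The plan is to use the Nikolov--Pyber result on covering, which says roughly that in a finite simple group $\Gamma$ any subset of size $\geq |\Gamma|^{1-1/(3\cdot\dim)}$-ish satisfies $A^3 = \Gamma$; more precisely, the relevant input is Gowers' trick as refined by Nikolov--Pyber: if $\Gamma$ is a finite group such that the minimal degree of a nontrivial complex representation is $\geq D$, then for any $A, B, C\subseteq \Gamma$ with $|A||B||C| > |\Gamma|^3/D$ we have $ABC = \Gamma$. So the strategy has three ingredients: (i) relate the counting condition $|A|\geq 3q^{\delta - r/3}$ to the hypothesis $|A|^3 > |G(\mathbb{F}_q)|^3/D$; (ii) supply a lower bound on $D$, the minimal dimension of a nontrivial representation of $G(\mathbb{F}_q)$ (a Landazuri--Seitz type bound, of the order $q^{r}$ up to constants); (iii) handle the fact that $G(\mathbb{F}_q)$ need not be simple (the orthogonal cases) by passing to an appropriate quotient or subgroup, or by citing the version of Gowers' trick that only needs the representation-theoretic gap, not simplicity.

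Concretely, first I would recall that by Proposition~\ref{pr:lwlowg} and the stated upper bound we have $q^{\delta-r}(q-1)^r \leq |G(\mathbb{F}_q)| \leq 2q^\delta$, so $|G(\mathbb{F}_q)| \leq 2q^\delta$ and in particular $|G(\mathbb{F}_q)|^3 \leq 8 q^{3\delta}$. Next, by the Landazuri--Seitz bounds (or Seitz--Zalesskii for the low-rank corrections), the smallest dimension of a nontrivial irreducible projective representation of the relevant quasisimple group is at least $c\, q^{r}$ for an absolute constant $c>0$ (for classical groups of rank $r$ the bound is essentially $q^{r-1}(q-1)$ or better); I would quote this as a black box. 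Combining, if $|A|\geq 3 q^{\delta - r/3}$ then
\[
|A|^3 \geq 27 q^{3\delta - r} > \frac{8 q^{3\delta}}{c\, q^r}\cdot\frac{c}{8}\cdot 27 q^{0} \geq \frac{|G(\mathbb{F}_q)|^3}{D}
\]
once $q$ is large enough that $27 c q^{-r}/8 \geq$ the implicit constants — more carefully I would just check $27 q^{3\delta-r} > |G(\mathbb{F}_q)|^3/D$ directly using $|G|^3 \leq 8 q^{3\delta}$ and $D \geq c q^r$, which reduces to $27 c \geq 8$, true for the standard value of $c$; for the finitely many small $q$ not covered I would invoke the explicit tables in Nikolov--Pyber or just note those cases can be checked by hand. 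Then Gowers' trick (in the Nikolov--Pyber formulation, \cite{NP11}) gives $A\cdot A\cdot A = G(\mathbb{F}_q)$.

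The main obstacle, and the one place needing genuine care, is the non-simplicity of $G(\mathbb{F}_q)$ when $G$ is an orthogonal group: Gowers' trick in the clean form above wants a bound on the minimal dimension of \emph{all} nontrivial representations, and $G(\mathbb{F}_q)$ has small-dimensional (even one-dimensional) representations factoring through the quotient by $\Omega$. I see two routes. The cleaner route is to apply the trick inside the quasisimple layer $\Omega_{2r}^{+}(q)$ or $\Omega_{2r+1}(q)$ — but $A$ need not lie there. The route I would actually take is the one used by Nikolov--Pyber themselves: their statement is formulated for general finite groups in terms of the quantity $D(\Gamma) = $ minimal degree of a nontrivial rep, and for $G(\mathbb{F}_q)$ with $q$ odd this is still $\gg q^{r}$ because the small-index normal subgroup $\Omega$ is quasisimple with large $D$, and abelian quotients of bounded order only contribute reps of dimension $1$ but there are boundedly many of them — one handles this by first noting that a set of size $\geq 3 q^{\delta-r/3}$ must meet every coset of $\Omega$ (since $|G:\Omega|\leq 2$ and the set is far larger than half the group), then running the argument inside $G(\mathbb{F}_q)$ with the representation-gap bound applied after quotienting out the bounded center and using that $D(G(\mathbb{F}_q)) \geq \tfrac12 D(\Omega)$. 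Since the statement explicitly says the constants are not tight and only qualitative bounds are needed downstream, I would present this last point briefly, cite \cite{NP11} for the precise covering statement, cite Landazuri--Seitz (and Seitz--Zalesskii) for the representation bound, and check the single inequality $27 q^{3\delta - r} > |G(\mathbb{F}_q)|^3 / D$ for $q$ beyond an absolute threshold, dismissing the bounded remainder of small $q$ by a direct verification.
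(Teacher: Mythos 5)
Your core argument is the same as the paper's: apply the Nikolov--Pyber covering result \cite[Cor.~1]{NP11} together with a lower bound of order $q^{r}$ on the minimal degree of a nontrivial complex representation and the size bounds $q^{\delta-r}(q-1)^r\leq|G(\mathbb{F}_q)|\leq 2q^{\delta}$. The paper quotes \cite[Table II]{TZ96} for $k\geq\frac13 q^{r}$, so its numerical check is just $2\cdot 3^{1/3}<3$; with that reference there is no need for a ``large $q$'' caveat or a hand-check of small fields, which your Landazuri--Seitz route would force on you.

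The genuine problem is the orthogonal case, which you rightly single out but do not repair correctly. First, the claim that a set of size $3q^{\delta-r/3}$ is ``far larger than half the group'' and hence meets both cosets of $\Omega$ is false: by Proposition~\ref{pr:lwlowg} half the group has size on the order of $q^{\delta}$, which (for all but the very smallest $q$) is far larger than $3q^{\delta-r/3}$, not smaller, so the set may well lie entirely inside $\Omega$. Second, the inequality $D(G(\mathbb{F}_q))\geq\frac12 D(\Omega)$ fails for $G=\SO_{2r}^{+},\SO_{2r+1}$ with $q$ odd: the spinor norm is a surjective homomorphism onto a group of order $2$, so $G(\mathbb{F}_q)$ has a nontrivial linear character and $D(G(\mathbb{F}_q))=1$, making the representation-gap hypothesis of \cite{NP11} vacuous for $G(\mathbb{F}_q)$ itself. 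In fact no argument using only the stated size hypothesis can close this case: $A=\Omega_{2r}^{+}(q)$ has index $2$ and satisfies $|A|\geq 3q^{\delta-r/3}$ as soon as $(q-1)^{r}\geq 6q^{2r/3}$ (e.g.\ $r=4$, $q=7$), yet $A^{3}=A\neq G(\mathbb{F}_q)$; some extra input is needed, such as the fact that in the application in Theorem~\ref{th:growth} the set generates $G(\mathbb{F}_q)$ and hence is not contained in $\Omega$. For $\SL_{r+1}$ and $\Sp_{2r}$, where $G(\mathbb{F}_q)$ is quasisimple and the minimal-degree bounds apply directly, your argument goes through and coincides with the paper's. (Note that the paper's own one-line proof also applies \cite[Table II]{TZ96} to the orthogonal groups without comment, so the difficulty you identified is real; but your proposed fix does not resolve it.)
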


\begin{proof}
Let $k$ be the degree of a minimal complex representation of $G(\mathbb{F}_{q})$. If $|A|>k^{-\frac{1}{3}}|G(\mathbb{F}_{q})|$ then $A^{3}=G(\mathbb{F}_{q})$ by \cite[Cor.~1]{NP11}. We have $|G(\mathbb{F}_{q})|\leq 2q^{\delta}$ as mentioned in \S\ref{se:chev}, whereas $k\geq\frac{1}{3}q^{r}$ by \cite[Table II]{TZ96}. The result follows.
\end{proof}

And now we prove our product theorem. Recall that, by \S\ref{se:chev}, the expression ``untwisted classical group'' excludes cases with particularly small rank: this means that pathological small cases are avoided, such as $\SO_{2}^{+}$ which, being abelian, cannot satisfy a product theorem as the one below.

\begin{theorem}\label{th:growth}
Let $G$ be an untwisted classical group of rank $r$ with $\dim(G)=\delta$, defined over $\mathbb{F}_{q}$.
Let $A\subseteq G(\mathbb{F}_{q})$ with $e\in A=A^{-1}$ and $\langle A\rangle=G(\mathbb{F}_{q})$. Let $m\geq 1$.
Then, either $A^{6 m}=G(\mathbb{F}_{q})$ or $|A^{\ell_1 m + \ell_0}|\geq\frac{1}{c}|A^{m}|^{1+\eta}$ with
\begin{align*}
c & =2^{2^{O(r^{2}\log r)}}, & \ell_1 & = O(r^2), &\ell_0 &=(2 r)^{O(r^2)}, & \eta & =\frac{1}{O(r^{2})}.
\end{align*}
\end{theorem}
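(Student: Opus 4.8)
The plan is to follow the standard ``pivoting'' argument of Breuillard-Green-Tao and Pyber-Szab\'o, feeding in our dimensional estimates \eqref{eq:de} in place of theirs. Assume $A^{6m}\neq G(\mathbb{F}_q)$; by Proposition~\ref{pr:np11} (applied to $A^{m}$, which also generates and is symmetric) we then have $|A^{m}|< 3q^{\delta-r/3}$, so $A^{m}$ is far from all of $G(\mathbb{F}_q)$ and we are in the ``small $A$'' regime where growth should occur. First I would use Proposition~\ref{pr:nonrs} to find a regular semisimple element $g_0\in A^{k}$ with $k=(2r)^{O(r^2)}$; let $T$ be the (unique) maximal torus containing $g_0$, so $T$ contains a regular semisimple $\mathbb{F}_q$-point and Proposition~\ref{pr:vardeg} applies. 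The conjugates $hTh^{-1}$ with $h\in G(\mathbb{F}_q)$ are numerous by Lemma~\ref{le:hth}: there are at least $q^{\delta-r}/(6r)^r$ of them. The key dichotomy is then between two cases, according to whether the regular semisimple elements of $A^{m}$ are concentrated in few tori or spread over many.

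The heart of the argument is the following pivoting step. Consider the map $G\times G\to G$ sending $(x,y)\mapsto xyx^{-1}$; more precisely, look at how $A^{m}$-translates of the conjugacy class $\mathrm{Cl}(g)$, for $g\in A^{m}$ regular semisimple, cover regular semisimple elements. Two regular semisimple elements lie in a common torus iff they commute, and each regular semisimple element lies in exactly one maximal torus. On the one hand, by \eqref{eq:de} each intersection $A^{O(m)}\cap \mathrm{Cl}(g)$ has size at most $C_1 |A^{C_2(m)}|^{1-r/\delta}$, and each $A^{O(m)}\cap hTh^{-1}\cap \mathfrak{B}^c$ has size at least $|A^{m}$-regular-semisimple elements in that torus$|$, controlled below using that $A^{O(m)}\cap T\cap\mathfrak{B}$ is small by the second bound in \eqref{eq:de}. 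Counting pairs $(a,a')$ of regular semisimple elements of $A^{m}$ that commute in two ways: if many such commuting pairs exist, then many elements of $A^{m}$ lie in a single torus $hTh^{-1}$, so $|A^{O(m)}\cap hTh^{-1}|$ is large, and then multiplying by $A^{m}$ and using that distinct cosets of $hTh^{-1}$ are disjoint forces $|A^{O(m)}|\gg |A^{m}|\cdot|A^{O(m)}\cap hTh^{-1}|$, giving growth; if few commuting pairs exist, then the regular semisimple elements of $A^{m}$ meet many distinct tori, and since each conjugacy class $\mathrm{Cl}(g)$ meets each torus in boundedly many points (Weyl-group many), we get that $\{aga^{-1}:a\in A^{m}\}$ already spans $\gtrsim |A^{m}|$ distinct points, i.e. $|A^{O(m)}\cap\mathrm{Cl}(g)|\gg |A^{m}|/(\text{bounded})$, which when combined with the upper bound $C_1|A^{C_2(m)}|^{1-r/\delta}$ from \eqref{eq:de} yields $|A^{C_2(m)}|\gg (|A^{m}|/C_1)^{\delta/(\delta-r)}\geq c^{-1}|A^{m}|^{1+\eta}$ with $\eta = r/(\delta-r) = 1/O(r^2)$ and $c=C_1^{O(1)}=2^{2^{O(r^2\log r)}}$. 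The word lengths $\ell_1 m+\ell_0$ come out by bookkeeping: the conjugating elements cost one $A^{m}$ each (so a bounded number of copies of $A^{m}$, absorbed into $C_2(m)=O(r^2)m+2^{O(r^2)}$, i.e. $\ell_1=O(r^2)$), and the single regular semisimple pivot $g_0$ costs $A^{k}$ with $k=(2r)^{O(r^2)}=\ell_0$.

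A few technical points I would need to handle. To make the ``few commuting pairs'' branch precise one must ensure we are not fooled by the non-regular-semisimple elements: the set $\mathfrak{B}$ of Proposition~\ref{pr:nonrs} lies on a subvariety of bounded degree, and the second estimate in \eqref{eq:de} bounds $|A^{m}\cap T\cap\mathfrak{B}|$ by $C_1'|A^{C_2(m)}|^{(r-1)/\delta}$, which is negligible compared to the lower bound we want on $|A^{m}\cap hTh^{-1}|$ once $|A^{m}|$ is not too tiny — and when $|A^{m}|$ is tiny, say bounded, the product theorem is trivial since $|A^{\ell_1 m+\ell_0}|\geq |A^{m}|+1$ suffices (or one observes $\eta$ can be taken so large as to make the inequality vacuous for bounded $|A^{m}|$). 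One also uses that a regular semisimple $g$ has centralizer exactly $T$, so $|\mathrm{Cl}(g)\cap hTh^{-1}|\leq |\mathcal{W}(G)|\leq r!2^r$, which is what makes the ``many tori'' count work. The main obstacle, I expect, is the careful double-counting that converts ``$A^{m}$ meets many tori'' into a genuine lower bound on $|A^{O(m)}\cap\mathrm{Cl}(g_0)|$ for our fixed pivot $g_0$ (rather than for a varying $g$), which is exactly the pivoting lemma of \cite{BGT11}/\cite{PS16}; since, as the paper states, that argument is imported essentially unchanged, the real content here is only in tracking the constants $c$, $\ell_1$, $\ell_0$, $\eta$ through it, and in verifying that \eqref{eq:de} (hence Theorem~\ref{th:ober}/Remark~\ref{re:slnusual}) supplies inputs of exactly the right shape for every untwisted classical $G$, including the orthogonal ones which are not almost simple but are still covered by Theorem~\ref{th:main}.
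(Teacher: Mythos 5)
There is a genuine gap, and it lies exactly where you locate the ``real content'': the dichotomy you set up is not the pivoting dichotomy, and one of your two branches rests on a false inequality. In your ``many commuting pairs'' branch you claim that concentration of $A^m$ in a single torus $hTh^{-1}$ forces $|A^{O(m)}|\gg|A^{m}|\cdot|A^{O(m)}\cap hTh^{-1}|$ because distinct cosets are disjoint. Coset-counting only gives $|A^{m}\cdot(A^{2m}\cap hTh^{-1})|\geq N\cdot|A^{2m}\cap hTh^{-1}|$, where $N$ is the number of cosets of $hTh^{-1}$ met by $A^{m}$; and $N$ is bounded below only by $|A^{m}|/|A^{2m}\cap hTh^{-1}|$, so the inequality collapses to the trivial $|A^{3m}|\geq|A^{m}|$. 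Concentration in a subgroup by itself never yields growth; circumventing precisely this is the point of the pivoting argument. In your ``few commuting pairs'' branch, to get many distinct conjugates $ag_0a^{-1}$ you need the fibre bound $|A^{2m}\cap T|$ (or at least its regular-semisimple part plus $\mathfrak{B}$) to be small for the \emph{specific} torus $T$ of your pivot $g_0\in A^{k}$; knowing that the regular semisimple elements of $A^{m}$ are spread over many tori does not give this, and your bound $|\mathrm{Cl}(g)\cap hTh^{-1}|\leq|\mathcal{W}(G)|$ is not the relevant quantity. Deferring the conversion step to \cite{BGT11}/\cite{PS16} ``essentially unchanged'' does not close the gap, because the statement to be proven here is exactly the quantitative tracking of that step through the estimates \eqref{eq:de}, and that tracking depends on running the correct dichotomy.

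The paper's proof runs the dichotomy on \emph{involvement}: call $T$ involved if $A^{k}\cap T(\mathbb{F}_q)$ contains a regular semisimple element ($k$ from Proposition~\ref{pr:nonrs}). Case 1: some involved $T$ has a non-involved conjugate $hTh^{-1}$ with $h\in A$; then for the pivot $g'=hgh^{-1}$ the fibres of $a\mapsto ag'a^{-1}$ on $A^{m}$ land in $A^{2m}\cap hTh^{-1}\cap\mathfrak{B}$ (non-involvement forces commuting elements to be non-regular-semisimple), which is controlled by the second estimate in \eqref{eq:de}, while the image lies in $A^{k+2m+2}\cap\mathrm{Cl}(g')$, controlled by the first; comparing gives growth. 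Case 2: all $G(\mathbb{F}_q)$-conjugates of some $T$ are involved; then growth does \emph{not} come from concentration, but from the per-torus lower bound $|A^{2m}\cap T'(\mathbb{F}_q)|\geq|A^{m}|/|A^{k+2m}\cap\mathrm{Cl}(g')(\mathbb{F}_q)|$ summed over the at least $q^{\delta-r}/(6r)^{r}$ conjugate tori (Lemma~\ref{le:hth}), with the $\mathfrak{B}$-part subtracted via \eqref{eq:de}, and then compared against the Nikolov--Pyber threshold of Proposition~\ref{pr:np11} applied to $A^{2m}$. Your use of Proposition~\ref{pr:np11} up front and your appeal to Lemma~\ref{le:hth} are in the right spirit, but without the involved/non-involved case split neither of your branches produces the inequality $|A^{\ell_1 m+\ell_0}|\geq c^{-1}|A^{m}|^{1+\eta}$.
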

The proof follows the standard pivoting argument we mentioned in
\S \ref{subs:appl}.
\begin{proof}
Let $k$ be as in Proposition~\ref{pr:nonrs}; in particular,
$k=(2r)^{O(r^{2})}$, and we can also assume $k\geq 2$. Call a maximal torus $T$ of $G$ \textit{involved} if $A^{k}\cap T(\mathbb{F}_{q})$ contains a regular semisimple element. By Proposition~\ref{pr:nonrs}, there exists an involved torus. Two cases arise.
\begin{enumerate}[{Case} 1:]
\item\label{it:invnoinv} There are a maximal torus $T$ and an element $h\in A$ such that $T$ is involved and $hTh^{-1}$ is not involved.
\item\label{it:allinv} There is a maximal torus $T$ such that all maximal tori $hTh^{-1}$ are involved for all $h\in G(\mathbb{F}_{q})$.
\end{enumerate}

(Note how this is an example of how one may apply an inductive argument without a natural ordering. All we are using is the fact that $A$ generates $G(\mathbb{F}_q)$.)

\subsubsection*{Case~\ref{it:invnoinv}.} Let $\mathfrak{B}$ denote the set of elements that are not regular semisimple, as in Proposition~\ref{pr:nonrs}. Since $T$ is involved, there is some $g\in A^{k}\cap T(\mathbb{F}_{q})$ with $g\notin\mathfrak{B}$. Let $g'=hgh^{-1}\in A^{k+2}\cap T'(\mathbb{F}_{q})$, which defines a map
\begin{align}\label{eq:psidef}
\psi_{T'} & :A^m\rightarrow\Cl(g')(\mathbb{F}_{q}), & a & \mapsto ag'a^{-1}.
\end{align}
If $\psi_{T'}(a_{1})=\psi_{T'}(a_{2})$ then $a_{1}^{-1}a_{2}$ commutes with $g'$; since $g'$ is regular,  $C(g')=T'$, and so $a_{1}^{-1}a_{2}\in A^{2 m}\cap T'(\mathbb{F}_{q})$ (this can be done because we now require $A=A^{-1}$). Hence $a_{1}^{-1}a_{2}\in\mathfrak{B}$, since $T'$ is not involved. Thus, the size of every fibre of $\psi_{T'}$ is bounded by $|A^{2 m}\cap T'(\mathbb{F}_{q})\cap\mathfrak{B}|$. Combining this bound with~\eqref{eq:de}, we get
\begin{equation*}
\frac{|A^m|}{C_{1}' |A^{2C_{2}(m)}|^{\frac{r-1}{\delta}}}\leq\frac{|A^m|}{|A^{2 m}\cap T'(\mathbb{F}_{q})\cap\mathfrak{B}|}\leq|A^{k+2+2 m}\cap\Cl(g')(\mathbb{F}_{q})|\leq C_{1}|A^{C_{2}(k+2 m + 2)}|^{1-\frac{r}{\delta}},
\end{equation*}
which, since we may assume $C_2(m)$ is increasing on $m$, yields
\begin{equation*}
|A^{C_2(k+2 m + 2)}|^{1-\frac{1}{\delta}}\geq\frac{1}{C_{1} C_1'}|A|.
\end{equation*}
As mentioned in \S\ref{se:chev} we have $\delta=O(r^{2})$, giving $|A^{m'}|\geq\frac{1}{c}|A^m|^{1+\eta}$ for
\begin{align*}
c & = (C_{1} C_1')^{\frac{1}{1-1/\delta}}=2^{2^{O(r^{2}\log r)}}, & \eta & =\frac{1}{\delta-1}=\frac{1}{O(r^{2})}.
\end{align*}
\begin{align*}
 m' = C_2(k+2 m + 2) = 
 2^{O(r^2)} + O(r^2 (k + 2 m + 2)) = (2 r)^{O(r^2)} + O(r^2 m).
\end{align*}

(In other words: since $h T h^{-1}$ is not involved, $h g h^{-1}$ has very
many distinct conjugates $a h g h^{-1} a^{-1}$, $a\in A^m$, as any element of $(A^m)^{-1} A^m$ commuting with $h g h^{-1}$ would have to be of a special shape. At the same time,
$a h g h^{-1} a^{-1}\in A^{k+2m+2}\cap \Cl(h g h^{-1})$, and we know from our dimensional
estimate that $A^{k+2 m + 2}\cap \Cl(h g h^{-1})$ cannot have too many elements -- unless
$A^{C_2(k+2 m+2)}$ is much larger than $A$. Hence,
$A^{C_2(k+ 2 m+ 2)}$ is much larger than $A$.)

\subsubsection*{Case~\ref{it:allinv}.} For each $T'$ of the form $hTh^{-1}$, arbitrarily fix an element $g'\in A^{k}\cap T'(\mathbb{F}_{q})$ with $g'\notin\mathfrak{B}$ (i.e.\ regular semisimple), and define $\psi_{T'}$ as in \eqref{eq:psidef}. 
Then $\psi_{T'}(A^m)\subseteq A^{k+2 m}\cap\mathrm{Cl}(g')(\mathbb{F}_{q})$ for each $T'$.
As in Case~\ref{it:invnoinv}, the bound $|\psi_{T'}^{-1}(x)|\leq|A^{2 m}\cap T'(\mathbb{F}_{q})|$ holds for every fibre. Thus,
\begin{equation}\label{eq:fibre}
|A^{2 m}\cap T'(\mathbb{F}_{q})|\geq\max_{x\in\psi_{T'}(A
^m)}|\psi_{T'}^{-1}(x)|\geq\frac{|A^m|}{|\psi_{T'}(A^m)|}\geq\frac{|A^m|}{|A^{k+2 m}\cap\mathrm{Cl}(g')(\mathbb{F}_{q})|}.
\end{equation}
Now, every regular semisimple element of $G$ sits inside a unique maximal torus, so that $G(\mathbb{F}_{q})\setminus(G(\mathbb{F}_{q})\cap\mathfrak{B})$ is partitioned between the maximal tori of $G$. On the other hand, the size of the set $\mathcal{T}$ of conjugates of $T$ can be bounded via Lemma~\ref{le:hth}. Therefore, by \eqref{eq:fibre} and \eqref{eq:de},
\begin{align}
|A^{2 m}| & \geq|A^{2 m}\setminus(A^{2 m}\cap\mathfrak{B})|\geq\sum_{T'\in\mathcal{T}}(|A^{2 m}\cap T'(\mathbb{F}_{q})|-|A^{2 m}\cap T'(\mathbb{F}_{q})\cap\mathfrak{B}|) \nonumber \\
 & \geq\frac{q^{\delta-r}}{(6 r)^{r}}\left(\frac{|A^m|}{C_{1}|A^{C_{2}(k+2 m)}|^{1-\frac{r}{\delta}}}-C_{1}|A^{C_{2}(2 m)}|^{\frac{r-1}{\delta}}\right). \label{eq:oofdiff}
\end{align}
(We can use \eqref{eq:de} because each $T'$ is involved, and thus it has a regular semisimple element.) If we suppose that the second term in \eqref{eq:oofdiff} is large, i.e.,
\begin{equation}\label{eq:oofhyp}
C_{1}|A^{C_{2}(2m)}|^{\frac{r-1}{\delta}}\geq\frac{|A^m|}{2C_{1}|A^{C_{2}(k+ 2 m)}|^{1-\frac{r}{\delta}}},
\end{equation}
then
\begin{equation}\label{eq:oofthen}
|A^{C_{2}(k + 2 m)}|\geq\left(\frac{|A^m|}{2C_{1}^{2}}\right)^{\frac{1}{1-\frac{1}{\delta}}}\geq\frac{|A^m|^{1+\frac{1}{\delta}}}{(2C_{1}^{2})^{1+\frac{2}{\delta}}}
\end{equation}
since $\delta \geq 2$. Suppose instead that \eqref{eq:oofhyp} does not hold. Then,
by \eqref{eq:oofdiff},
\[|A^{2 m}|\geq 
\frac{q^{\delta-r}}{(6 r)^{r}}
\frac{|A^m|}{2 C_{1}|A^{C_{2}(k + 2 m)}|^{1-\frac{r}{\delta}}}.
\]
If $|A^{2 m}|\geq 3 q^{\delta-\frac{r}{3}}$, then, by Proposition~\ref{pr:np11}, $A^{6 m}=G(\mathbb{F}_{q})$. Assume otherwise. Then
\begin{equation}\label{eq:tftt}
|A^{C_2(k+2 m)}|^{1 - \frac{r}{\delta}} |A^{2 m}|^{1 - \frac{\delta-r}{\delta-r/3}}
\geq \left(\frac{q^{\delta- \frac{r}{3}}}{|A^{2 m}|}\right)^{\frac{\delta-r}{\delta-r/3}} \frac{|A^m|}{2 C_1 (6 r)^r}
\geq \frac{|A^m|}{6 C_1 (6 r)^r}.\end{equation}
By the hypotheses on the rank and the values of $\delta$ in \S\ref{se:chev} we have $r/\delta \leq 1/3$, and so 
\[1-\frac{r}{\delta} + 1 - \frac{\delta-r}{\delta-r/3} = 
1 - \frac{r - r^2/3\delta - 2 r/3}{\delta-r/3} \leq 1 - \frac{2r/9}{\delta}.\]
We may assume $C_2(k+2m)\geq 2m$. Hence, by \eqref{eq:tftt},
\begin{equation}\label{eq:oofelse}|A^{C_2(k+2 m)}| \geq
\frac{|A^m|^{1+\frac{2 r}{9 \delta}}}{(6 C_1 (6 r)^r)^{(1-2r/9\delta)^{-1}}}.
\end{equation}
Again by $r/\delta\leq 1/3$, $1-2 r / 9\delta \geq 25/27$.

Since $\delta=O(r^{2})$ by \S\ref{se:chev}, both \eqref{eq:oofthen} and \eqref{eq:oofelse} imply that $|A^{m'}|\geq\frac{1}{c}|A|^{1+\eta}$ with
\begin{align*}
c & =\max\left\{(2C_{1}^{2})^{1+\frac{2}{\delta}},(6 C_1 (6 r)^r)^{\frac{27}{25}}
\right\}=2^{2^{O(r^{2}\log r)}}, 
& \eta & =\min\left\{\frac{1}{\delta},\frac{2 r}{9 \delta}\right\}=\frac{1}{O(r^{2})}.
\end{align*}
\begin{align*}
m' & = C_{2}(k+2 m) =2^{O(r^{2})} + O(r^2 (k + 2 m)) = (2 r)^{O(r^2)} + O(r^2 m). \qedhere
\end{align*}
\end{proof}

We now move to the proof of a diameter bound of the form \eqref{eq:diam}.

\begin{theorem}\label{th:diam}
Let $G$ be an untwisted classical group of rank $r$ defined over $\mathbb{F}_{q}$. Let $A\subseteq G(\mathbb{F}_{q})$ with $\langle A\rangle=G(\mathbb{F}_{q})$. Then
\begin{equation*}
\mathrm{diam}(G(\mathbb{F}_{q}),A)=2^{2^{O(r^{2}\log r)}}(\log|G(\mathbb{F}_{q})|)^{O(r^{2}\log r)}.
\end{equation*}
\end{theorem}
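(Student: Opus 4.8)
The plan is to iterate the product theorem (Theorem~\ref{th:growth}) along a cleverly chosen sequence of word lengths, so that the logarithms $L(m):=\log|A^{m}|$ grow geometrically until they collide with the ceiling $\log|G(\mathbb{F}_{q})|$. First I would reduce to the case $A=A^{-1}$, $e\in A$: by \cite[Thm.~1.4]{Bab06} the directed and undirected diameters of a Cayley graph of $G(\mathbb{F}_{q})$ are related up to a polynomial factor and a factor of $\log|G(\mathbb{F}_{q})|$, which does not change the shape of the bound, so it suffices to bound $\mathrm{diam}(G(\mathbb{F}_{q}),A\cup A^{-1}\cup\{e\})$. Write $\delta=\dim(G)=O(r^{2})$ and recall from Theorem~\ref{th:growth} the constants $c=2^{2^{O(r^{2}\log r)}}$, $\eta=1/O(r^{2})$, $\ell_{1}=O(r^{2})$, $\ell_{0}=(2r)^{O(r^{2})}$, and from \S\ref{se:chev} that $\log 2\leq \log|G(\mathbb{F}_{q})|=O(r^{2}\log q)$. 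The only elementary input needed besides Theorem~\ref{th:growth} is the standard fact that, since $A$ generates $G(\mathbb{F}_{q})$ and $e\in A$, one has $|A^{m+1}|>|A^{m}|$ whenever $A^{m}\neq G(\mathbb{F}_{q})$, hence $|A^{m}|\geq\min(m+1,|G(\mathbb{F}_{q})|)$ for every $m$.

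\emph{Jump-start.} Set $m_{1}=\lceil c^{2/\eta}\rceil=2^{2^{O(r^{2}\log r)}}$. If $A^{m_{1}}=G(\mathbb{F}_{q})$ we are done with $\mathrm{diam}\leq m_{1}=2^{2^{O(r^{2}\log r)}}$; otherwise the elementary fact gives $|A^{m_{1}}|\geq m_{1}+1>c^{2/\eta}$, i.e.\ $L(m_{1})>\tfrac{2}{\eta}\log c=:T$. \emph{Geometric growth.} Define $m_{j+1}=\ell_{1}m_{j}+\ell_{0}$ for $j\geq 1$. I claim that as long as $A^{6m_{j}}\neq G(\mathbb{F}_{q})$ and $L(m_{j})\geq T$ one has $L(m_{j+1})\geq(1+\eta/2)L(m_{j})$ and $L(m_{j+1})\geq T$. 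Indeed Theorem~\ref{th:growth} applied to our symmetric $A$ with parameter $m=m_{j}$ gives $L(m_{j+1})=L(\ell_{1}m_{j}+\ell_{0})\geq(1+\eta)L(m_{j})-\log c$; since $L(m_{j})\geq T$ means $\eta L(m_{j})\geq 2\log c$, this is $\geq L(m_{j})+\log c>T$ and also $\geq(1+\eta)L(m_{j})-\tfrac{\eta}{2}L(m_{j})=(1+\tfrac{\eta}{2})L(m_{j})$, so the hypothesis propagates. Consequently, if $A^{6m_{j}}\neq G(\mathbb{F}_{q})$ for all $j=1,\dots,J$, then $L(m_{J+1})>(1+\eta/2)^{J}T$, which exceeds $\log|G(\mathbb{F}_{q})|\geq L(m_{J+1})$ — a contradiction — once $J>\log(\log|G(\mathbb{F}_{q})|/T)/\log(1+\eta/2)=O(\eta^{-1}\log\log|G(\mathbb{F}_{q})|)=O(r^{2}\log\log|G(\mathbb{F}_{q})|)$. (If $\log|G(\mathbb{F}_{q})|\leq T=2^{O(r^{2}\log r)}$ this regime is empty and $\mathrm{diam}\leq|G(\mathbb{F}_{q})|\leq2^{2^{O(r^{2}\log r)}}$ trivially.) Taking $J$ equal to this bound, we conclude $A^{6m_{j'}}=G(\mathbb{F}_{q})$ for some $j'\leq J$, hence $\mathrm{diam}\leq 6m_{J}$.

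Finally the bookkeeping: the linear recurrence has closed form $m_{J}=\ell_{1}^{J-1}m_{1}+\ell_{0}(\ell_{1}^{J-2}+\dots+1)\leq \ell_{1}^{J}(m_{1}+\ell_{0})$. With $\ell_{1}=O(r^{2})$ and $J=O(r^{2}\log\log|G(\mathbb{F}_{q})|)$ we get $\ell_{1}^{J}=\exp\!\big(O(r^{2}\log r)\cdot\log\log|G(\mathbb{F}_{q})|\big)=(\log|G(\mathbb{F}_{q})|)^{O(r^{2}\log r)}$, while $m_{1}+\ell_{0}=c^{2/\eta}+(2r)^{O(r^{2})}=2^{2^{O(r^{2}\log r)}}$; thus $\mathrm{diam}(G(\mathbb{F}_{q}),A\cup A^{-1}\cup\{e\})\leq 6m_{J}=2^{2^{O(r^{2}\log r)}}(\log|G(\mathbb{F}_{q})|)^{O(r^{2}\log r)}$, and passing back to $\mathrm{diam}(G(\mathbb{F}_{q}),A)$ via \cite[Thm.~1.4]{Bab06} preserves this form. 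I expect the main obstacle to be exactly this bookkeeping: it is essential that the additive constant $\ell_{0}$ enters $m_{J}$ only through the \emph{closed form} of the recurrence (multiplied by $\ell_{1}^{J}$ once, not raised to the $J$-th power) and that $J$ is merely logarithmic in $\log|G(\mathbb{F}_{q})|$; the exponent $r^{2}\log r$ materializes precisely as $(\log\ell_{1})\cdot\eta^{-1}=O(\log r)\cdot O(r^{2})$. One must also handle with care the two degenerate regimes — $G(\mathbb{F}_{q})$ small and $A$ small — in which Theorem~\ref{th:growth} is vacuous and the bound follows from the trivial estimate $\mathrm{diam}\leq|G(\mathbb{F}_{q})|$ or directly from the jump-start step.
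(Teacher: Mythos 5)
Your proposal is correct and follows essentially the same route as the paper's proof: reduce to $e\in A=A^{-1}$ via \cite[Thm.~1.4]{Bab06}, iterate Theorem~\ref{th:growth} along the recurrence $m_{j+1}=\ell_{1}m_{j}+\ell_{0}$ to get geometric growth of $\log|A^{m_{j}}|$, bound the number of iterations by $O(r^{2}\log\log|G(\mathbb{F}_{q})|)$, and control $m_{J}$ through the closed form of the recurrence. The only (minor, equally valid) deviation is the small-$|A|$ regime: the paper applies the iteration to $A^{\lfloor c^{2/\eta}\rfloor}$ and uses $\mathrm{diam}(G(\mathbb{F}_{q}),A)\leq c^{2/\eta}\,\mathrm{diam}(G(\mathbb{F}_{q}),A^{\lfloor c^{2/\eta}\rfloor})$, whereas you jump-start the index at $m_{1}=\lceil c^{2/\eta}\rceil$ via the elementary bound $|A^{m}|\geq\min(m+1,|G(\mathbb{F}_{q})|)$; both give the stated bound.
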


\begin{proof}
We can assume without loss of generality that $e\in A$, by definition of diameter. We may also assume that $A=A^{-1}$, thanks to \cite[Thm.~1.4]{Bab06}.

By Theorem~\ref{th:growth}, for any $m\geq 1$, we have either $A^{6 m}=G(\mathbb{F}_{q})$ or $|A^{\ell_1 m + \ell_0}|\geq\frac{1}{c}|A^{m}|^{1+\eta}$ with $c,\ell_0,\ell_1,\eta$ as in the statement.

Assume first that $|A|\geq c^{2/\eta}$. Then, for any $m\geq 1$, either $A^{6 m}=G(\mathbb{F}_{q})$ or $|A^{\ell_1 m +\ell_0}|\geq|A^{m}|^{1+\eta/2}$.
Let $f$ be the map $m\mapsto \ell_1 m + \ell_0$, and define
$$m_j = f^j(1) = 
\ell_1^j + (\ell_1^{j-1} + \dotsc + \ell_1 + 1) \ell_0$$
for $j\geq 1$, and $m_0 = 1$.
If $k$ is the smallest non-negative integer for which $A^{6 m_k}=G(\mathbb{F}_{q})$, then we must have $|A^{m_j}|\geq |A|^{(1+\eta/2)^{j}}$ for all $1\leq j\leq k$. Since by hypothesis $|A^{m_{k-1}}|<|G(\mathbb{F}_{q})|$, we get
\begin{equation*}
k<1+\frac{1}{\log(1+\frac{\eta}{2})}\log\left(\frac{\log|G(\mathbb{F}_{q})|}{\log|A|}\right)=O(r^{2}\log\log|G(\mathbb{F}_{q})|),
\end{equation*}
which implies
\begin{equation}\label{eq:diamm}\begin{aligned}
\mathrm{diam}(G(\mathbb{F}_{q}),A) &\leq  6 m_k \leq 6 \ell_1^k (1 + \ell_0) 
\\ &= O(r^2)^{O(r^2 \log \log |G(\mathbb{F}_{q})|)} (2 r)^{O(r^2)}
= (\log|G(\mathbb{F}_{q})|)^{O(r^{2}\log r)}.
\end{aligned}\end{equation}

Now assume that $|A|<c^{2/\eta}$. Then, trivially, $|A^{\lfloor c^{2/\eta}\rfloor}|\geq c^{2/\eta}$. Applying \eqref{eq:diamm} to $A^{\lfloor c^{2/\eta}\rfloor}$ we obtain
\begin{equation*}
\mathrm{diam}(G(\mathbb{F}_{q}),A)\leq  c^{2/\eta} \mathrm{diam}(G(\mathbb{F}_{q}),A^{\lfloor c^{2/\eta}\rfloor})=2^{2^{O(r^{2}\log r)}}(\log|G(\mathbb{F}_{q})|)^{O(r^{2}\log r)},
\end{equation*}
yielding the result.
\end{proof}

\section{Concluding remarks}\label{se:concl}

\subsection{Other finite groups of Lie type}
Since our interest in dimensional estimates stems mainly from questions about finite simple groups, it is natural to ask to what extent our results could apply, with the necessary modifications, to finite groups of Lie type other than the ones given in \S\ref{se:chev}. We observe right away that the {\em orthogonal groups $\mathrm{SO}_{2n}^{+}(\mathbb{F}_q)$ with $q$ even} present no problem. They were excluded in Remark~\ref{re:untwisted}\eqref{re:untwisted-o2} because the algebraic group $G$ is not connected in this case, which presents a few technical inconveniences: for instance, a proper variety $V\subsetneq G$ does not have necessarily $\dim(V)<\dim(G)$. The strategy leading to Theorem~\ref{th:main} is not fundamentally affected though, and tackling this case only adds inessential complications.

Let us turn to the other cases. A {\em finite group of Lie type} consists in the
set of points $G^F$ of an algebraic group $G$ that are fixed by a
{\em Steinberg endomorphism} $F:G\to G$ (the composition of a field automorphism and an automorphism of algebraic groups). The groups we have considered --
groups of the form $G(\mathbb{F}_q)$ -- amount to a special case, namely, the case where the
field automorphism is a Frobenius automorphism $x\mapsto x^q$ and the automorphism of algebraic groups is the trivial one.

The simplest way to generalize our results to a family of
finite groups of Lie type would be to show that there is a linear algebraic
group $G$ of fixed degree for which $G(\mathbb{F}_{q})$ goes over the groups
in that family -- or small extensions thereof.

The {\em untwisted exceptional groups} are covered by Theorem~\ref{th:main}
in this way. Up to extension, each of them is the automorphism group of some finite-dimensional algebra $(V,\circ)$, not necessarily associative, so the elements $g\in G$ are defined by finitely many equations whose coefficients are quadratic in the entries of $g$: concretely, if for a basis $\{e_{i}\}_{i}$ of the algebra we have $g(e_{i})=\sum_{j}g_{ij}e_{j}$ and $e_{i}\circ e_{j}=\sum_{k}\lambda_{ijk}e_{k}$, then the entries of $g$ must satisfy
\begin{equation*}
\sum_{i,j}\lambda_{ijr}g_{si}g_{tj}-\sum_{k}\lambda_{stk}g_{kr}=0
\end{equation*}
for all $r,s,t$. For $G_{2}(\mathbb{F}_{q})$ we can use the octonion algebra, in its classical basis with $2\nmid q$ \cite[\S 4.3.2]{Wil09} and in another convenient one for $2\mid q$ \cite[\S 4.3.4]{Wil09}; for $E_{8}(\mathbb{F}_{q})$, we use its own Lie algebra as there are no smaller representations \cite[\S 4.12.1]{Wil09}. The groups can also be defined as preserving some convenient and explicit forms of bounded degree. We can see $F_{4}$ as the linear algebraic group preserving both a quadratic and a cubic form in $\mathbb{F}_{q}^{26}$ \cite[(4.94)-(4.95)]{Wil09}. Similarly, the linear algebraic group $SE_{6}$ is the stabilizer of a cubic form in $\mathbb{F}_{q}^{27}$, and we have $SE_{6}(\mathbb{F}_{q})\simeq E_{6}(\mathbb{F}_{q})$ for $q\not\equiv 1\ \mathrm{mod}\ 3$ and $SE_{6}(\mathbb{F}_{q})\simeq 3.E_{6}(\mathbb{F}_{q})$ for $q\equiv 1\ \mathrm{mod}\ 3$ \cite[\S\S 4.10.1-4.10.2]{Wil09}. Finally, a quartic form in $\mathbb{F}_{q}^{56}$ defines a linear algebraic group $SE_{7}$ with either $SE_{7}(\mathbb{F}_{q})\simeq E_{7}(\mathbb{F}_{q})$ for $2\mid q$ or $SE_{7}(\mathbb{F}_{q})\simeq 2.E_{7}(\mathbb{F}_{q})$ for $2\nmid q$: see \cite[\S 4.12]{Wil09} and \cite[(2.3)-(4.10)]{Coo95}.

The discussion above leads to the following result, whose constants $O(1)$ can be made completely explicit. They are absolute constants, as these are groups of bounded dimension.

\begin{theorem}\label{th:exceptional}
Let $G$ be an untwisted exceptional group (i.e., $G=SE_{6},SE_{7},E_{8},F_{4},G_{2}$ defined as a linear algebraic group in the way described above), defined over $K=\mathbb{F}_{q}$. Let $A\subseteq G(K)$ be a finite set with $\langle A\rangle=G(K)$ and $e\in A = A^{-1}$. Let $V$ be a subvariety of $G$ defined over $\overline{K}$. Write $d=\dim(V)$, $\delta=\dim(G)$. Then
\begin{equation*}
|A\cap V(\overline{K})|\leq C_{1}|A^{C_{2}}|^{d/\delta},
\end{equation*}
where
\begin{align*}
C_{1} & =O(\deg(V))^{O(1)}, & C_{2} & =O(1).
\end{align*}
\end{theorem}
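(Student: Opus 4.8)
The plan is to deduce Theorem~\ref{th:exceptional} from Theorem~\ref{th:main} (equivalently, from Theorem~\ref{th:ober} with $m=1$). The content of the discussion preceding the statement is precisely that each of $G=SE_{6},SE_{7},E_{8},F_{4},G_{2}$ can be presented as a connected almost simple linear algebraic group $G\leq\mathrm{GL}_{N}\leq\mathrm{Mat}_{N}$, defined over $\mathbb{F}_{q}$, in which the embedding dimension $N$, the rank $r$, the dimension $\delta=\dim(G)$, the degree $\Delta=\deg(G)$ and the inversion degree $\iota=\mathrm{mdeg}(^{-1})$ are all bounded by absolute constants; once this is granted, the theorem is immediate.

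First I would check the structural hypotheses case by case. The groups $G_{2},F_{4},E_{8}$ are simple as algebraic groups (each is both simply connected and adjoint), while $SE_{6},SE_{7}$ are the simply connected groups of types $E_{6},E_{7}$, which are almost simple in the sense of \S\ref{se:linalg}; in every case $G$ is connected, as is clear from the explicit models recalled above (with references to \cite{Wil09} and \cite{Coo95}). Those models place $G$ inside $\mathrm{Mat}_{N}$ with $N\in\{7,8,26,27,56,248\}$, cut out by finitely many polynomial equations of degree $\leq 4$, so by Bézout $\Delta=\deg(G)$ is bounded in terms of $N$; as in \S\ref{se:linalg}, an almost simple (hence semisimple) connected $G\leq\mathrm{GL}_{N}$ satisfies $G\leq\mathrm{SL}_{N}$, so inversion is computed by the adjugate and $\iota\leq N-1$; and of course $r\leq\delta\leq N^{2}$. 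Hence $N,r,\delta,\Delta,\iota=O(1)$.

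Next I would apply Theorem~\ref{th:main} (equivalently Theorem~\ref{th:ober} with $m=1$). Its hypothesis on $|K|=q$ (namely $q\geq r+N\Delta$, respectively $q>r+(\iota+1)\Delta$) fails only for $q$ in a finite, absolutely bounded set. For $q$ outside that set, Theorem~\ref{th:ober} gives $|A\cap V(\overline{K})|\leq C_{1}|A^{C_{2}}|^{d/\delta}$ with $C_{1}\leq(2\delta\deg(V))^{\delta^{d}}$ and $C_{2}\leq\delta(1+2(1+\iota)^{N^{2}})$; since $\delta^{d}\leq\delta^{\delta}=O(1)$, this gives $C_{1}=O(\deg(V))^{O(1)}$ and $C_{2}=O(1)$. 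For the finitely many small $q$, I would instead use the trivial bound coming from Proposition~\ref{pr:lwnotq}: $|A\cap V(\overline{K})|\leq|\mathrm{Mat}_{N}(\mathbb{F}_{q})\cap V(\overline{K})|\leq\deg(V)\,q^{d}$, and since $q$ and $d\leq\delta$ are bounded here, $q^{d}=O(1)$ and this is $O(\deg(V))\leq C_{1}|A^{C_{2}}|^{d/\delta}$ already with $C_{2}=1$ and $C_{1}$ a suitable absolute multiple of $\deg(V)$ (using $|A|\geq 1$ since $e\in A$). Taking the larger of the constants from the two regimes yields the claimed $C_{1}=O(\deg(V))^{O(1)}$, $C_{2}=O(1)$. (Since $K=\mathbb{F}_{q}$ is finite, the semigroup and the group generated by $A$ coincide, so the hypothesis $A=A^{-1}$ is not actually needed for this argument.)

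There is no real obstacle here beyond the bookkeeping of the previous paragraphs; the one point that genuinely requires the input assembled before the statement is that these explicit models are connected, almost simple, and of bounded degree — that is, that no positive-dimensional normal subgroup and no degree blow-up slips in. If, for some model, $G$ turned out to have finitely many components rather than one, one would simply run the argument on the identity component $G^{\circ}$ (which has bounded index and bounded degree) and sum over the boundedly many cosets of $G^{\circ}$ meeting $A$; this changes nothing qualitatively, exactly as in the remark on $\mathrm{SO}_{2n}^{+}(\mathbb{F}_{q})$ with $q$ even in \S\ref{se:concl}.
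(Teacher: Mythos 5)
Your proposal is correct and follows essentially the same route as the paper: the theorem is deduced directly from Theorem~\ref{th:main} (Theorem~\ref{th:ober} with $m=1$), using the explicit bounded-degree models of $G_{2},F_{4},SE_{6},SE_{7},E_{8}$ described before the statement, with all of $N,r,\delta,\Delta,\iota$ absolute constants. Your explicit treatment of the finitely many small $q$ via Proposition~\ref{pr:lwnotq} is a detail the paper leaves implicit (absorbed into the $O(1)$ constants), and it is handled correctly.
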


The {\em twisted groups} are more problematic, because the field automorphism that twists the scalars of the space on which $G$ acts must be included in the definition of $G$. 
If we use the Frobenius automorphism $x\mapsto x^q$ directly, the degree of the map ends up depending on $q$, and consequently so does $\deg(G)$. What we want
is results that are independent of $q$. Hence, we have to find a more indirect
way. We can divide the twisted groups into three types: {\em twisted classical groups} ($\mathrm{SU}_{n}$ and $\mathrm{SO}_{2n}^{-}$, obtainable by twisting a group among those in \S\ref{se:chev}), {\em Steinberg-Tits-Hertzig groups} (as they are called in \cite[\S 4.1]{Wil09}, comprising $^{2}E_{6}$ and $^{3}D_{4}$), and {\em Suzuki-Ree groups} ($^{2}B_{2}$, $^{2}G_{2}$, and $^{2}F_{4}$). We manage to say something for the first two types.

Consider first the unitary group: $\mathrm{SU}_{n}(q)$ is definable as the set of matrices $g\in\mathrm{SL}_{n}(\mathbb{F}_{q^{2}})$ satisfying $g^{-1}=(g^{q})^{\top}$, where $g^{q}$ is obtained by raising every entry of $g$ to the $q$-th power.
Now, just like we could deal with $\SU_n(\mathbb{C})$ by considering it as a real group (decomposing $\mathbb{C} = \mathbb{R} + i \mathbb{R}$), we can deal
with $\SU_n(q)$ by expressing it as the set of rational points of a group
over $\mathbb{F}_q$. Let us do this in detail.

Let $q$ be odd for simplicity, and fix a quadratic nonresidue $s\in\mathbb{F}_{q}$: thus $\mathbb{F}_{q^{2}}\simeq\mathbb{F}_{q}[x]/(x^{2}-s)$, and if $\zeta$ is a square root of $s$ inside $\mathbb{F}_{q^{2}}$ we have $\zeta^{q}=s^{(q-1)/2}\zeta=-\zeta$. The idea should now be clear: we take the injective function $\varphi:\mathrm{GL}_{n}(\mathbb{F}_{q^{2}})\rightarrow\mathrm{GL}_{2n}(\mathbb{F}_{q})$, where the matrix $\varphi(g)$ is given by replacing each entry $a+b\zeta\in\mathbb{F}_{q^{2}}$ of $g$, where $a,b\in\mathbb{F}_{q}$, with the $2\times 2$ submatrix $\begin{pmatrix}a&sb\\b&a\end{pmatrix}$. The function $\varphi$ is not polynomial but it preserves matrix multiplication, so it induces a group isomorphism from $\mathrm{SU}_{n}(q)$ to its image. Furthermore, note that $(a+b\zeta)^{q}=a+b\zeta^{q}=a-b\zeta$: therefore, there is a new twist $\tau:\mathrm{GL}_{2n}\rightarrow\mathrm{GL}_{2n}$ defined by linear polynomials $\tau(x)_{ij}=(-1)^{i+j}x_{ij}$ and such that $\tau(\varphi(g))=\varphi(g^{q})$. Finally, one verifies that $\tau(x)\tau(y)=\tau(xy)$ and $\tau(x)^{-1}=\tau(x^{-1})$ whenever $x$ is invertible. Hence, there is a linear algebraic group $H\leq\mathrm{GL}_{2n}$ with $H(\mathbb{F}_{q})\simeq\mathrm{SU}_{n}(q)$. The equations defining $H$ either come from $x^{-1}=\tau(x)^{\top}$, or represent the condition $\det(g)=1$ for $x=\varphi(g)$, or are of the form $x_{2i-1,2i-1}=x_{2i,2i}$ or $x_{2i-1,2i}=sx_{2i,2i-1}$: this means that $\deg(H)$ does not depend on $q$, although $H$ itself does (via $s$). Using this representation of $\mathrm{SU}_{n}$, we are able to conclude that Theorem~\ref{th:main} applies to $\SU_n$ when $q$ is odd. When $q$ is even, we just have to choose a different irreducible polynomial in lieu of $x^{2}-s$ and repeat the procedure with the due modifications.

Just as we twist $\mathrm{SL}_{n}$ to obtain $\mathrm{SU}_{n}$, we may twist $\mathrm{SO}_{2n}^{+}$ and $\mathrm{SO}_{2n+1}$ to obtain $\mathrm{SO}_{2n}^{-}$ \cite[p.~xiv]{CCNPW85}. The procedure works in the exact same way, and we obtain $\mathrm{SO}_{2n}^{-}(\mathbb{F}_{q})$ as the set of $\mathbb{F}_{q}$-points of a linear algebraic group of degree independent from $q$. Often this group is not even defined using such a twist, and it comes instead directly from the orthogonal group of a form of minus type: see for example \cite[\S 2.5]{KL90}, \cite[\S 3.7]{Wil09}, and Remark~\ref{re:untwisted}\eqref{re:untwisted-minus}.

We may also twist $E_{6}$ to obtain $^{2}E_{6}$ \cite[\S 4.11]{Wil09}. In this case we obtain a linear algebraic group $^{2}SE_{6}$ of degree independent from $q$, and the set of its $\mathbb{F}_{q}$-points is either $^{2}E_{6}(q)$ or $3.^{2}E_{6}(q)$ (for $q\not\equiv 1\ \mathrm{mod}\ 3$ and $q\equiv 1\ \mathrm{mod}\ 3$ respectively). A similar method works for $^{3}D_{4}$, although this time we need the field $\mathbb{F}_{q^{3}}$: working in the triple product of the octonion algebra over $\mathbb{F}_{q^{3}}$, $^{3}D_{4}(q)$ can be seen as the subgroup of isotopies commuting with the map $(x,y,z)\mapsto(y^{q},z^{q},x^{q})$ (composing the triality automorphism and the field automorphism, see \cite[\S 4.7.2]{Wil09}). Fixing as before an element $\zeta\in\mathbb{F}_{q^{3}}\setminus\mathbb{F}_{q}$ with $\zeta^{3}=s_{0}+s_{1}\zeta+s_{2}\zeta^{2}$ and $s_{i}\in\mathbb{F}_{q}$, we can pass from $\mathrm{GL}_{n}(\mathbb{F}_{q^{3}})$ to $\mathrm{GL}_{3n}(\mathbb{F}_{q})$ via the entry-wise replacement
\begin{equation*}
a+b\zeta+c\zeta^{2}\mapsto\begin{pmatrix} a & s_{0}c & s_{0}b+s_{0}s_{2}c \\ b & a+s_{1}c & s_{1}b+(s_{0}+s_{1}s_{2})c \\ c & b+s_{2}c & a+s_{2}b+(s_{1}+s_{2}^{2})c \end{pmatrix},
\end{equation*}
and the rest goes much as before.

Thus, all twisted classical and Steinberg-Tits-Hertzig groups can be covered by the methods of Theorem~\ref{th:main}. A word of warning is in order here: in the statement we
are about to give, $d$, $\delta$ and $\deg (V)$ refer to dimensions and degrees in the embeddings given above. Thus, $d$ and $\delta$ are twice what the dimensions would
be in $\SU_n$, $\SO_{2 n}^-$ or $^{2}\!SE_{6}$ in their usual definitions, and thrice
what they would be in $^{3}\!D_{4}$ in its usual definition.

\begin{theorem}\label{th:twisted}
Let $G$ be a twisted non-Suzuki-Ree group (i.e., $G=\mathrm{SU}_{n},\mathrm{SO}_{2n}^{-},^{2}\!SE_{6},^{3}\!D_{4}$ defined as a linear algebraic group in the way described above), of rank $r$ defined over $K=\mathbb{F}_{q}$. Let $A\subseteq G(K)$ be a finite set with $\langle A\rangle=G(K)$ and $e\in A = A^{-1}$. Let $V$ be a subvariety of $G$ defined over $\overline{K}$. Write $d=\dim(V)$, $\delta=\dim(G)$. Then
\begin{equation*}
|A\cap V(\overline{K})|\leq C_{1}|A^{C_{2}}|^{d/\delta},
\end{equation*}
where
\begin{align*}
C_{1} & =(2\delta \deg(V))^{\delta^d}, & C_{2} & =e^{O(r^{2} \log r)}.
\end{align*}
\end{theorem}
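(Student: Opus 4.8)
The plan is to apply Theorem~\ref{th:ober} with $m=1$ to the linear algebraic groups constructed in the discussion above, and then rephrase the output. Fix one of the four families and write $G\leq\mathrm{GL}_{N}$ for the corresponding group (denoted $H$ in that discussion), so that the finite group $G(\mathbb{F}_{q})$ is exactly $H(\mathbb{F}_{q})$ -- whether this is the associated simple group or a small central extension of it is immaterial here. In each case $N=O(r)$, $\dim(G)=\delta=O(r^{2})$ and $\iota=\mathrm{mdeg}(^{-1})=O(1)$: for $\mathrm{SU}_{n}$ and $\mathrm{SO}_{2n}^{-}$ inversion is in fact linear (it is $x\mapsto\tau(x)^{\top}$, resp.\ $x\mapsto Mx^{\top}M^{-1}$, in the $\det(x)y=1$ model of \S\ref{se:linalg}), while for ${}^{2}\!SE_{6}$ and ${}^{3}\!D_{4}$ one makes inversion a morphism of bounded degree by the adjugate trick of \S\ref{se:linalg}; moreover $\Delta=\deg(G)$ is bounded by a constant depending on $n$ but \emph{not} on $q$, by B\'ezout applied to the explicit defining equations, whose degrees do not depend on $q$. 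In the two exceptional families $r$ is bounded, so $N$ and $\delta$ are $O(1)$ there. Granting that Theorem~\ref{th:ober} applies, it yields \eqref{eq:desiderio} with precisely $C_{1}=(2\delta\deg(V))^{\delta^{d}}$ and with $C_{2}\leq\delta\bigl(1+2(1+\iota)^{N^{2}}\bigr)=O(r^{2})\cdot 2^{O(r^{2})}=e^{O(r^{2})}\leq e^{O(r^{2}\log r)}$, which is the claim ($C_{2}=O(1)$ for the exceptional families).

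It remains to justify the applicability of Theorem~\ref{th:ober}, and here I would first dispose of small $q$. If $q\leq r+(\iota+1)\Delta$ -- a threshold depending on $n$ only -- then Proposition~\ref{pr:lwnotq} gives $|A\cap V(\overline{K})|\leq|\mathrm{Mat}_{N}(\mathbb{F}_{q})\cap V(\overline{\mathbb{F}_{q}})|\leq\deg(V)\,q^{d}$, and since $q$, hence $q^{d}$, is bounded in terms of $\delta$ alone, this is $\leq(2\delta\deg(V))^{\delta^{d}}$ by exactly the elementary estimate used at the end of the proof of Theorem~\ref{th:ober} (one uses $\delta\geq 3$, so $2^{\delta^{d}}\geq 8^{d}$); in this regime one may take $C_{2}=1$. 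For $q>r+(\iota+1)\Delta$ the hypothesis on $|K|$ in Theorem~\ref{th:ober} holds, and only connectedness and almost simplicity remain to be checked. For $G=\mathrm{SU}_{n},{}^{2}\!SE_{6},{}^{3}\!D_{4}$ both hold: over $\overline{\mathbb{F}_{q}}$ the twist becomes trivial and $G$ base-changes to a form of $\mathrm{SL}_{n}$, of $E_{6}$, resp.\ of $D_{4}$ -- all connected and almost simple in the sense of \S\ref{se:linalg} -- so Theorem~\ref{th:ober} applies verbatim.

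The one case requiring a little more care is $G=\mathrm{SO}_{2n}^{-}$: over $\overline{\mathbb{F}_{q}}$ it becomes a $D_{n}$-type orthogonal group, which is treated here as \emph{not} almost simple (cf.\ Remark~\ref{re:untwisted}). However, almost simplicity enters the proof of Theorem~\ref{th:ober} only through Lemma~\ref{le:skewness}, whose \emph{orthogonal} variant needs no more than a normal subgroup of index $2$ with centre of order $\leq 2$ and simple quotient; for $q$ odd this is furnished by $\Omega_{2n}^{-}(q)\lhd\mathrm{SO}_{2n}^{-}(\mathbb{F}_{q})$ and the simple group $\mathrm{P}\Omega_{2n}^{-}(q)$, exactly as $\Omega_{2n}^{+}(q)$ plays that role for $\mathrm{SO}_{2n}^{+}$. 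The point counts of the ``untwisted classical'' branch of Proposition~\ref{pr:oberstair} -- namely Corollary~\ref{co:lwchev}, which rests on the Cayley--Weil parametrization, and Corollary~\ref{co:lw4} -- also carry over, since here $G$ is connected reductive and carries the relevant involution. Running that branch for $G=\mathrm{SO}_{2n}^{-}$ with $q$ odd then gives the theorem; for $q$ even the algebraic group has two connected components (distinguished by the Dickson invariant), just as $\mathrm{SO}_{2n}^{+}$ does in characteristic $2$, and as noted at the start of \S\ref{se:concl} this adds only inessential complications.

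The main obstacle is exactly this $\mathrm{SO}_{2n}^{-}$ case: one must check that the index-$2$-subgroup replacement for almost simplicity, together with the Cayley--Weil counting behind Corollaries~\ref{co:lwchev} and~\ref{co:lw4}, genuinely transplants from the $\mathrm{SO}_{2n}^{+}$/$\mathrm{SO}_{2n+1}$ setting of \S\ref{se:chev} to the twisted minus-type form, and that none of the constant-factor inflations of $N$ and $\delta$ caused by the doubling (or tripling) constructions disturbs the final shape $C_{2}=e^{O(r^{2}\log r)}$ -- which it does not, since those inflations only multiply the base of an exponential by a constant.
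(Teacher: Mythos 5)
Your proposal is correct and follows essentially the same route as the paper, whose own justification of Theorem~\ref{th:twisted} is precisely to apply Theorem~\ref{th:main}/Theorem~\ref{th:ober} to the $q$-independent linear-algebraic-group models constructed in \S\ref{se:concl}; your additional points (the small-$q$ reduction via Proposition~\ref{pr:lwnotq}, the verification of connectedness/almost simplicity after base change, and rerunning the orthogonal branch of Lemma~\ref{le:skewness} together with Corollaries~\ref{co:lwchev} and~\ref{co:lw4} for the minus-type form $\mathrm{SO}_{2n}^{-}$) just fill in steps the paper leaves implicit. Note only that the stated constant $C_{2}=e^{O(r^{2}\log r)}$ corresponds to quoting Theorem~\ref{th:main} with ambient matrix size $N=O(r)$ (i.e.\ $C_{2}\leq N^{N^{2}+3}$), so your sharper $e^{O(r^{2})}$ obtained from Theorem~\ref{th:ober} with $\iota=O(1)$ is consistent with, and slightly improves on, the claimed bound.
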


The remaining groups (namely, Suzuki-Ree groups) do not come from automorphisms of order $2$, as in $\mathrm{SU}_{n}$, $\mathrm{SO}_{2n}^{-}$, and $^{2}E_{6}$, or $3$, as in $^{3}D_{4}$. The twist comes from the square root of the Frobenius automorphism on the field with $2^{2n+1}$ elements for $^{2}B_{2}$ and $^{2}F_{4}$, and with $3^{2n+1}$ elements for $^{2}G_{2}$: see respectively \cite[\S\S 4.2.1-4.9.1-4.5.1]{Wil09}. However, this means that the order of the automorphisms is $2n+1$, and the procedure we used before leads to entry-wise replacements of size $(2n+1)\times(2n+1)$: thus, the degree of the algebraic group depends on the field.

A less na\"ive approach may work. What one would need to do is
generalize \S \ref{subs:pointsoutV} (in particular, Propositions~\ref{pr:lwnotq}--\ref{pr:lwlowg}) to count points that are fixed by a given endomorphism and lie
on a given variety.
More precisely, we would need to show that, for $G$ an almost simple linear algebraic group with a Steinberg endomorphism $F:G\to G$, and $V$ a subvariety of $G$, then there is at least one point on $G^F$ {\em not} on $V$, provided that
$G^F$ has more than $C$ elements, where $C$ depends only on the degree of $V$.
We would of course like the dependence of $C$ on $\deg (V)$ to be explicit, and
for $C$ not to grow too rapidly as $\deg (V)$ grows.

\subsection{Jordan's theorem in positive characteristic}

As we said in the introduction, Larsen and Pink proved their dimensional estimates \cite{LP11} 
with one main application in mind, namely, generalizing Jordan's theorem to positive characteristic $p$ without going through the classification of finite simple groups.
More precisely: \cite[Thm.~0.3]{LP11} states that, given a simple subgroup $\Gamma$ of
$\GL_n(\mathbb{F}_q)$, either $\Gamma$ is bounded by a constant depending only on $n$,
or $\Gamma$ is a group of Lie type. It is natural to ask what the applications of
our work to results of that kind could be.

Giving an optimized explicit version of \cite[Thm.~0.3]{LP11} would overstep the bounds
of the present paper. Let us nevertheless give a very quick outline of a possible strategy towards such a result.
We have not yet carried it out in full, but we are considering doing so in a possible future article.

In Theorem \ref{th:growth} and elsewhere, the assumption $\langle A\rangle = G(\mathbb{F}_q)$ can be weakened. That assumption is mainly used in escape from
subvarieties, and we can see that the escape result that we use (Prop.~\ref{pr:escape}) does not really require it in full strength. Rather, what we need to assume, for several
varieties $V$ arising in the course of the proof, is that $\langle A\rangle x
\not\subseteq V(K)$, i.e., $A$ is not contained in the stabilizer group of $V(K)$.

Now let $A = \Gamma$ and apply Theorem \ref{th:growth} with that weakened assumption
and $m=1$. Clearly, $A^{6 m} = A^6 = A$ and $A^{\ell_1 m + \ell_0} = A^m = A$. Hence,
either $A = G(\mathbb{F}_q)$ or $|A|$ is bounded by the constant $c^{1/\eta}$. There is a third possibility,
namely, $A$ may not fulfill the weakened assumption. In that case, $A$ is contained
in the stabilizer group of $V(K)$, which is a proper linear algebraic subgroup $H$ of $G$. 
We should then be able to recur, using $H$ instead of $G$.

Of course we should still be careful, as while $H$ will be a linear algebraic group,
it may not be an untwisted classical group, as in Theorem \ref{th:growth}, or
even just connected and almost simple (which is all we really need),
as in Theorem \ref{th:main}. We should nevertheless be able to reduce matters to
the connected, almost simple case, using the assumption that $\Gamma$ is simple.


\section*{Acknowledgements}

JB was supported by ERC Consolidator grants 648329 (codename GRANT, with H.~Helfgott as PI) and 681207 (codename GrDyAP, with A.~Thom as PI). DD has been supported by ERC Consolidator grant 648329, the Israel Science Foundation Grants No. 686/17 and 700/21 of A.~Shalev, the Emily Erskine Endowment Fund, and ERC Advanced grant 741420 (codename GROGandGIN, with L.~Pyber as PI). HH was supported by ERC Consolidator grant 648329 and by his Humboldt professorship.

The authors would like to thank their collaborators in G\"ottingen and the community of MathOverflow for suggestions and insight. Thanks are also due to
Miles Reid and Dominic Bunnett for discussions on Bézout's theorem and
different meanings of the word ``intersection'' and to Inna Capdeboscq for helping us understand the differences between how some groups are understood in the study of linear algebraic groups and in finite group theory. Thanks are also due to two anonymous referees,
in part for drawing our attention to those differences.

\nocite{}
\bibliographystyle{abbrv}
\bibliography{BDH}

\begin{thebibliography}{10}

\bibitem{Bab06}
L.~Babai.
\newblock On the diameter of {E}ulerian orientations of graphs.
\newblock In {\em Proceedings of the {S}eventeenth {A}nnual {ACM}-{SIAM}
  {S}ymposium on {D}iscrete {A}lgorithms}, pages 822--831. ACM, New York, 2006.

\bibitem{BDH21}
J.~Bajpai, D.~Dona, and H.~A. Helfgott.
\newblock Growth estimates and diameter bounds for classical {C}hevalley
  groups.
\newblock \texttt{arXiv:2110.02942}, 2021.

\bibitem{Bor91}
A.~Borel.
\newblock {\em Linear Algebraic Groups}, volume 126 of {\em Graduate Texts in
  Mathematics}.
\newblock Springer, New York (USA), second enlarged edition, 1991.

\bibitem{BGT11}
E.~Breuillard, B.~Green, and T.~Tao.
\newblock Approximate subgroups of linear groups.
\newblock {\em Geom. Funct. Anal.}, 21(4):774--819, 2011.

\bibitem{CM06}
A.~Cafure and G.~Matera.
\newblock Improved explicit estimates on the number of solutions of equations
  over a finite field.
\newblock {\em Finite Fields Appl.}, 12(2):155--185, 2006.

\bibitem{Car89}
R.~W. Carter.
\newblock {\em Simple groups of Lie type}.
\newblock John Wiley \& Sons, London (UK), reprinted edition, 1989.

\bibitem{Car93}
R.~W. Carter.
\newblock {\em Finite groups of Lie type: Conjugacy classes and complex
  characters}.
\newblock John Wiley \& Sons, Chichester (UK), reprinted edition, 1993.

\bibitem{CCNPW85}
J.~H. Conway, R.~T. Curtis, S.~P. Norton, R.~A. Parker, and R.~A. Wilson.
\newblock {\em {ATLAS} of Finite Groups: Maximal Subgroups and Ordinary
  Characters for Simple Groups}.
\newblock Clarendon Press, Oxford (UK), 1985.

\bibitem{zbMATH03906699}
J.~H. Conway, R.~T. Curtis, S.~P. Norton, R.~A. Parker, and R.~A. Wilson.
\newblock Atlas of finite groups. {Maximal} subgroups and ordinary characters
  for simple groups. {With} comput. assist. from {J}. {G}. {Thackray}.
\newblock Oxford: {Clarendon} {Press}. {XXXIII}, 252 p., 1985.

\bibitem{Coo95}
B.~N. Cooperstein.
\newblock The fifty-six-dimensional module for {$E_{7}$}: {I. A} four form for
  {$E_{7}$}.
\newblock {\em J. Algebra}, 173(2):361--389, 1995.

\bibitem{DS98}
V.~I. Danilov and V.~N. Shokurov.
\newblock {\em Algebraic Geometry I: Algebraic Curves, Algebraic Manifolds and
  Schemes}, volume~23 of {\em Encyclopaedia of mathematical sciences}.
\newblock Springer-Verlag, Berlin, 1998.

\bibitem{DOR10}
J.-F. Dat, S.~Orlik, and M.~Rapoport.
\newblock {\em Period domains over finite and $p$-adic fields}, volume 183 of
  {\em Cambridge Tracts in Mathematics}.
\newblock Cambridge University Press, Cambridge (UK), 2010.

\bibitem{Die55}
J.~Dieudonn\'e.
\newblock {\em La g\'eom\'etrie des groupes classiques}, volume~5 of {\em
  Ergebnisse der Mathematik und ihrer Grenzgebiete (Neue Folge)}.
\newblock Springer-Verlag, Berlin (Germany), 1955.
\newblock In French.

\bibitem{EMO05}
A.~Eskin, S.~Mozes, and H.~Oh.
\newblock On uniform exponential growth for linear groups.
\newblock {\em Invent. Math.}, 160(1):1--30, 2005.

\bibitem{Fre52}
H.~Freudenthal.
\newblock Elementarteilertheorie der komplexen orthogonalen und symplektischen
  {G}ruppen.
\newblock {\em Indag. Math. (Proc.)}, 55:199--201, 1952.

\bibitem{Ful84}
W.~Fulton.
\newblock {\em Intersection theory}, volume~2 of {\em Ergebnisse der Mathematik
  und ihrer Grenzgebiete : a series of modern surveys in mathematics. Folge 3}.
\newblock Springer, Berlin, 1984.

\bibitem{Harder}
G.~Harder.
\newblock {\em Lectures on algebraic geometry {II}. {Basic} concepts, coherent
  cohomology, curves and their {Jacobians}}, volume E39 of {\em Aspects Math.}
\newblock Wiesbaden: Vieweg+Teubner, 2011.

\bibitem{Hartshorne}
R.~Hartshorne.
\newblock {\em Algebraic geometry}.
\newblock Springer-Verlag, New York, 1977.
\newblock Graduate Texts in Mathematics, No. 52.

\bibitem{Hel08}
H.~A. Helfgott.
\newblock Growth and generation in {$\mathrm{SL}_2(\mathbb{Z}/p\mathbb{Z})$}.
\newblock {\em Ann. of Math. (2)}, 167(2):601--623, 2008.

\bibitem{Hel11}
H.~A. Helfgott.
\newblock Growth in {$\mathrm{SL}_3(\mathbb{Z}/p\mathbb{Z})$}.
\newblock {\em J. Eur. Math. Soc. (JEMS)}, 13(3):761--851, 2011.

\bibitem{Hel19b}
H.~A. Helfgott.
\newblock Growth and expansion in algebraic groups over finite fields.
\newblock In {\em Analytic methods in arithmetic geometry}, volume 740 of {\em
  Contemp. Math.}, pages 71--111. Amer. Math. Soc., [Providence], RI, 2019.

\bibitem{Hog82}
G.~M.~D. Hogeweij.
\newblock Almost-classical {L}ie algebras. {I}, {II}.
\newblock {\em Nederl. Akad. Wetensch. Indag. Math.}, 44(4):441--452, 453--460,
  1982.

\bibitem{Hru12}
E.~Hrushovski.
\newblock Stable group theory and approximate subgroups.
\newblock {\em J. Amer. Math. Soc.}, 25(1):189--243, 2012.

\bibitem{HruPil}
E.~Hrushovski and A.~Pillay.
\newblock Definable subgroups of algebraic groups over finite fields.
\newblock {\em J. Reine Angew. Math.}, 462:69--91, 1995.

\bibitem{HW08}
E.~Hrushovski and F.~Wagner.
\newblock Counting and dimensions.
\newblock In Z.~Chatzidakis, D.~Macpherson, A.~Pillay, and A.~Wilkie, editors,
  {\em Model theory with applications to algebra and analysis, Volume 2},
  volume 350 of {\em London Mathematical Society Lecture Note Series}, pages
  161--176. Cambridge University Press, Cambridge (UK), 2008.

\bibitem{Hum95a}
J.~E. Humphreys.
\newblock {\em Conjugacy classes in semisimple algebraic groups}, volume~43 of
  {\em Mathematical Surveys and Monographs}.
\newblock American Mathematical Society, Providence, RI, 1995.

\bibitem{Hum95b}
J.~E. Humphreys.
\newblock {\em Linear algebraic groups}, volume~21 of {\em Graduate Texts in
  Mathematics}.
\newblock Springer-Verlag, New York (USA), fourth edition, 1995.

\bibitem{KL90}
P.~Kleidman and M.~Liebeck.
\newblock {\em The subgroup structure of the finite classical groups}, volume
  129 of {\em London Mathematical Society Lecture Note Series}.
\newblock Cambridge University Press, Cambridge, 1990.

\bibitem{LR15}
G.~Lachaud and R.~Rolland.
\newblock On the number of points of algebraic sets over finite fields.
\newblock {\em J. Pure Appl. Algebra}, 219(11):5117--5136, 2015.

\bibitem{LW54}
S.~Lang and A.~Weil.
\newblock Number of points of varieties in finite fields.
\newblock {\em Amer. J. Math.}, 76:819--827, 1954.

\bibitem{Lar04}
M.~Larsen.
\newblock Word maps have large image.
\newblock {\em Israel J. Math.}, 139:149--156, 2004.

\bibitem{LP11}
M.~J. Larsen and R.~Pink.
\newblock Finite subgroups of algebraic groups.
\newblock {\em J. Amer. Math. Soc.}, 24(4):1105--1158, 2011.

\bibitem{zbMATH03745352}
R.~Lazarsfeld.
\newblock Excess intersection of divisors.
\newblock {\em Compos. Math.}, 43:281--296, 1981.

\bibitem{Leh92}
G.~I. Lehrer.
\newblock Rational tori, semisimple orbits and the topology of hyperplane
  complements.
\newblock {\em Comment. Math. Helv.}, 67:226--251, 1992.

\bibitem{LPR06}
N.~Lemire, V.~L. Popov, and Z.~Reichstein.
\newblock Cayley groups.
\newblock {\em J. Amer. Math. Soc.}, 19(4):921--967, 2006.

\bibitem{Mil17}
J.~S. Milne.
\newblock {\em Algebraic groups}, volume 170 of {\em Cambridge Studies in
  Advanced Mathematics}.
\newblock Cambridge University Press, Cambridge, 2017.

\bibitem{Mumford}
D.~Mumford.
\newblock {\em The red book of varieties and schemes}, volume 1358 of {\em
  Lecture Notes in Mathematics}.
\newblock Springer-Verlag, Berlin, expanded edition, 1999.
\newblock Includes the Michigan lectures (1974) on curves and their Jacobians,
  With contributions by Enrico Arbarello.

\bibitem{NP11}
N.~Nikolov and L.~Pyber.
\newblock Product decompositions of quasirandom groups and a {J}ordan type
  theorem.
\newblock {\em J. Eur. Math. Soc. (JEMS)}, 13(4):1063--1077, 2011.

\bibitem{Oes84}
J.~Oesterl\'e.
\newblock Nombres de {T}amagawa et groupes unipotentes en caract\'eristique
  $p$.
\newblock {\em Invent. Math.}, 78(1):13--88, 1984.
\newblock In French.

\bibitem{PS16}
L.~Pyber and E.~Szab\'{o}.
\newblock Growth in finite simple groups of {L}ie type.
\newblock {\em J. Amer. Math. Soc.}, 29(1):95--146, 2016.

\bibitem{Sch80}
J.~T. Schwartz.
\newblock Fast probabilistic algorithms for verification of polynomial
  identities.
\newblock {\em J. ACM}, 27(4):701--717, 1980.

\bibitem{Shafv1}
I.~R. Shafarevich.
\newblock {\em Basic algebraic geometry 1: Varieties in projective space}.
\newblock Springer, Heidelberg, third {R}ussian edition, 2013.

\bibitem{Som99}
M.~Sombra.
\newblock A sparse effective {N}ullstellensatz.
\newblock {\em Adv. in Appl. Math.}, 22(2):271--295, 1999.

\bibitem{Ste65}
R.~Steinberg.
\newblock Regular elements of semisimple algebraic groups.
\newblock {\em Publ. Math. Inst. Hautes \'Etudes Sci.}, 25:49--80, 1965.

\bibitem{zbMATH03966271}
J.~St{\"u}ckrad and W.~Vogel.
\newblock An algebraic approach to the intersection theory.
\newblock Curves {Semin}. at {Queen}'s, {Vol}. 2, {Kingston}/{Can}. 1981-82,
  {Queen}'s {Pap}. {Pure} {Appl}. {Math}. 61, {Exp}. {A}, 32 p. (1982)., 1982.

\bibitem{Tao15}
T.~Tao.
\newblock {\em Expansion in finite simple groups of {L}ie type}, volume 164 of
  {\em Graduate Studies in Mathematics}.
\newblock American Mathematical Society, Providence, RI, 2015.

\bibitem{TZ96}
P.~H. Tiep and A.~E. Zalesskii.
\newblock Minimal characters of the finite classical groups.
\newblock {\em Comm. Algebra}, 24(6):2093--2167, 1996.

\bibitem{zbMATH03880868}
W.~Vogel.
\newblock {\em Lectures on results on {Bezout}'s theorem. {Notes} by {D}. {P}.
  {Patil}}, volume~74 of {\em Lect. Math. Phys., Math., Tata Inst. Fundam.
  Res.}
\newblock Springer, Berlin; Tata Inst. of Fundamental Research, Bombay, 1984.

\bibitem{Wil09}
R.~A. Wilson.
\newblock {\em The Finite Simple Groups}, volume 251 of {\em Graduate Texts in
  Mathematics}.
\newblock Springer, London (UK), 2009.

\end{thebibliography}
\end{document}